\def\N{{\mathbb N}}
\def\Z{{\mathbb Z}}
\def\R{{\mathbb R}}
\def\C{{\mathbb C}}
\def\T{{\mathbb T}}
\def\P{{\mathbb P}}
\theoremstyle{plain}
\newtheorem{theorem}{Theorem}[section]
\theoremstyle{remark}
\newtheorem{example}[theorem]{Example}
\theoremstyle{plain}
\newtheorem{corollary}[theorem]{Corollary}
\newtheorem{lemma}[theorem]{Lemma}
\newtheorem{proposition}[theorem]{Proposition}
\newtheorem{definition}[theorem]{Definition}
\newtheorem{problem}[theorem]{Problem}
\numberwithin{equation}{section}
\newcommand{\norm}[1]{\lVert #1 \rVert}
\newcommand{\nrms}[1]{\Bigl\|#1\Bigr\|}
\newcommand{\has}[1]{\Bigl(#1\Bigr)}
\newcommand{\cbraceb}[1]{\bigl\{#1\bigr\}}
\newcommand{\ee}{\mathrm{e}}
\newcommand{\dd}{\hspace{2pt}\mathrm{d}}
\newcommand{\one}{{{\bf 1}}}
\def\O{\Omega}
\def\cI{{\mathcal I}}
\def\calL{{\mathcal L}}
\def\cP{{\mathscr P}}
\DeclareMathOperator {\UMD}{UMD}
\DeclareMathOperator{\sgn}{sgn}
 \numberwithin{equation}{section}
\newcommand{\lb}{\langle}
\newcommand{\rb}{\rangle}
\DeclareFontFamily{U}{mathx}{\hyphenchar\font45}
\DeclareFontShape{U}{mathx}{m}{n}{<5> <6> <7> <8> <9> <10> <10.95> <12> <14.4> <17.28> <20.74> <24.88> mathx10}{}
\DeclareSymbolFont{mathx}{U}{mathx}{m}{n}
\DeclareMathAccent{\widecheck}{0}{mathx}{"71}
\setlist[enumerate]{leftmargin=20pt}
\setlist[itemize]{leftmargin=20pt}
\begin{document}
	\title{Strongly Kreiss Bounded Operators in $\UMD$ Banach Spaces}

\author[C. Deng]{Chenxi Deng}
\address[C. Deng]{School of Mathematics and Statistics\\
Beijing Institute of Technology \\ Beijing 100081\\China }
\email{chenxideng@bit.edu.cn}

\author[E. Lorist]{Emiel Lorist}
\address[E. Lorist]{Delft Institute of Applied Mathematics\\
Delft University of Technology \\ P.O. Box 5031\\ 2600 GA Delft\\The
Netherlands}
\email{e.lorist@tudelft.nl}

\author[M.C. Veraar]{Mark Veraar}
\address[M.C. Veraar]{Delft Institute of Applied Mathematics\\
Delft University of Technology \\ P.O. Box 5031\\ 2600 GA Delft\\The
Netherlands}
\email{m.c.veraar@tudelft.nl}

\thanks{The first author is supported by China Scholarship Council (CSC). The third author is supported by the VICI subsidy VI.C.212.027 of the Netherlands Organisation for Scientific Research (NWO)}
	
\begin{abstract}
In this paper we give growth estimates for $\|T^n\|$ for $n\to \infty$ in the case $T$ is a strongly Kreiss bounded operator on a $\UMD$ Banach space $X$.  In several special cases we provide explicit growth rates. This includes known cases such as Hilbert and $L^p$-spaces, but also intermediate $\UMD$ spaces such as non-commutative $L^p$-spaces and variable Lebesgue spaces. \end{abstract}
	
\keywords{(strongly) Kreiss bounded, power boundedness, UMD space, discrete semigroup, Fourier multipliers, decompositions}
\subjclass[2020]{Primary: 47D03, Secondary: 47A30, 47A35, 42A61}
\maketitle


\section{Introduction}
Let $X$ be a Banach space. Suppose that $T\in \mathcal L(X)$ is power bounded, i.e., there exists a constant $C\geq 0$ such that $\|T^n\|\leq C$ for all $n\geq 1$. If $T$ is power bounded, then by the Neumann series
\begin{align*}
(\lambda-T)^{-1} &=\sum_{n\geq 0} \frac{T^n}{\lambda^{n+1}},   &|\lambda|>1,\intertext{one obtains}
\|(\lambda-T)^{-1}\| &\leq \sum_{n\geq 0} \frac{\|T^n\|}{|\lambda|^{n+1}}\leq \frac{C}{|\lambda|-1},  &|\lambda|>1.
\end{align*}
One can repeat the above calculation after differentiation, to see that
\begin{align*}
\|(\lambda-T)^{-k}\| \leq \frac{C}{(|\lambda|-1)^{k}}, \qquad |\lambda|>1,  \, k\in \N.
\end{align*}

These observations motivate the following definitions. An operator $T\in \mathcal{L}(X)$ is called {\em Kreiss bounded with constant $K$} if
\begin{align}\label{eq:Kreiss}
\|(\lambda-T)^{-1}\|\leq  \frac{K}{|\lambda|-1}, \qquad |\lambda|>1,
\end{align}
and $T$ is called {\em strongly Kreiss bounded with constant $K_s$} if
\begin{align}\label{k0}
\|(\lambda-T)^{-n}\|\leq \frac{K_s}{(|\lambda|-1)^{n}}, \qquad  |\lambda|>1,  \ n\in \N.
\end{align}
By the above observations, any power bounded operator is (strongly) Kreiss bounded. By letting $\lambda\to \infty$ one sees that $K_s,K\geq 1$.

 In applications to numerics and ergodic theory, one often needs power boundedness of $T$ or sharp estimates for $\|T^n\|$ as $n\to \infty$, which can be difficult to obtain directly. However, it is often possible to check (strong) Kreiss boundedness. Therefore, it is useful to investigate the converse to the above observations:
 \begin{enumerate}[(i)]
   \item\label{it:q1} Does {(strong)} Kreiss boundedness imply power boundedness?
   \item\label{it:q2} If this is not the case, which growth of $\|T^n\|$ can one obtain from the (strong) Kreiss boundedness of $T$?
 \end{enumerate}

In the  continuous time setting, the Hille-Yosida theorem provides a result of this form. It gives the equivalent characterization between the generation of bounded $C_0$-semigroups and the powers of the resolvent of its generator. Moreover, the Hille-Yosida theorem yields that $T$ is  strongly Kreiss bounded with constant $K_s$ if and only if
      \begin{align}\label{k1}
     \|\ee^{\xi T}\|\leq K_s \,\ee^{|\xi|},\qquad \xi\in \mathbb{C},
  \end{align}
see \cite[Proposition 1.1]{Nevanlinna}.

There is a gap between \eqref{k1} and power boundedness of $T$, stemming from the gap between the growth of an entire function and the decay of its Taylor coefficients ({see} \cite{Nevanlinna}). Therefore, the answer to Question \ref{it:q1} is unfortunately negative: not every (strongly) Kreiss bounded operator is power bounded. Counterexamples to this and related questions can be found in \cite{AC, CHMM,DKS, Kraa,LN}.

Question \ref{it:q2} has been extensively studied. For instance, using Cauchy's integral formula, one can check that if $T$ is Kreiss bounded with constant $K$, then (see \cite[p.9]{strikwerda}) we have
\begin{align}\label{eq:Kreisslinear}
\|T^n\|&\leq K \ee (n+1),  && n\in \N,
\intertext{
and, if $T$ is strongly Kreiss bounded with constant $K_s$, then (see  \cite[Theorem~2.1]{LN}) we have
}\label{eq:strongKreisslinear}
\|T^n\|&\leq K_s \sqrt{2\pi (n+1)}, &&n\in\N.
\end{align}
Moreover, these growth rates in $n$ are known to be optimal in general Banach spaces, see \cite{shields,strikwerda} for Kreiss bounded operators, and \cite[Example 2.2]{LN} for strongly Kreiss bounded operators.

Under geometric assumptions on $X$ one can improve the above bounds. In the special case that $X$ is $d$-dimensional, the ``Kreiss matrix theorem'' (see \cite{kreiss19621,leveque1984,Spijker}) asserts that Kreiss boundedness with constant $K$ implies $T$ is power bounded with $\|T^n\|\leq K \ee d$. In applications, the dimension may be very large (see \cite{DKS}), so it is of interest to study the sharpness with respect to $d$, which was  established in \cite{Kraa} up to multiplicative constants. In the finite dimensional setting, this seemed the end of the story. However, 20 years later in \cite{nikolski}, it was shown that the bound can be improved to sublinear growth in $d$  under further conditions.

In the infinite dimensional setting several results are known which improve the estimate \eqref{eq:Kreisslinear} for Kreiss bounded operators and the estimate  \eqref{eq:strongKreisslinear} for strongly Kreiss bounded operators:
\begin{itemize}
  \item If $X$ is a Hilbert space:
  \begin{itemize}
    \item \eqref{eq:Kreisslinear} can be improved to  $\|T^n\| = O\bigl({n}/{\sqrt{\log (n+2}}\bigr)$ (see  \cite[Theorem 5]{BM} and \cite[Theorem 4.1]{CCEL}).
        \item   \eqref{eq:strongKreisslinear} can be improved to $\|T^n\|= O((\log (n+2))^{\beta})$ for some $\beta>0$ depending on $T$ (see \cite[Theorem 4.5]{CCEL}). Moreover it is also shown in \cite[Proposition 4.9]{CCEL} that $\beta$ can be arbitrary large.
  \end{itemize}
    \item If $X = L^p$ with $p\in (1, \infty)\setminus\{2\}$:
    \begin{itemize}
      \item  \eqref{eq:Kreisslinear} can be improved to  $\|T^n\| = O\big({n}/{\sqrt{\log (n+2)}}\big)$ as well (see \cite[Corollary 3.2]{cuny}).
      \item  \eqref{eq:strongKreisslinear} can be improved to  $\|T^n\|= O(n^{|\frac{1}{2}-\frac{1}{p}|}(\log (n+2))^{\beta})$ for some $\beta>1$, where the number $|\frac{1}{2}-\frac{1}{p}|$ is optimal (see \cite[Theorem 1.1]{AC}).
    \end{itemize}
    \item If $X$ is a $\UMD$ space,  $q$ and $q^*$ denote the (finite) cotypes of $X$ and $X^*$, respectively:
        \begin{itemize}
          \item \eqref{eq:Kreisslinear} can be improved to $\|T^n\|=O(n/(\log (n+2))^{\beta})$ with $\beta=\frac{1}{q\wedge q^*}$ (see \cite[Theorem 3.1]{cuny}).
        \end{itemize}
\end{itemize}
\begin{table}[h]\footnotesize
\caption{\small Growth rates for (strongly) Kreiss bounded operators in various spaces.}\label{table}
\begin{tabular}{|c|c|c|c|c|}
  \hline \rule[0pt]{0pt}{10pt}
 &  Banach & Hilbert & $L^p$ & UMD \\
 \hline\rule[0pt]{0pt}{12pt}
KB&  $O(n)$ & $O\big({n/\sqrt{\log (n+2)}}\big)$  & $O({n/\sqrt{\log (n+2)}})$ & $O({n/(\log (n+2))^{\beta}})$ \\ [0.2ex]
 \hline\rule[0pt]{0pt}{12pt}
SKB  & $ O(\sqrt{n})$ & $ O((\log (n+2)))^{\beta})$  & $O(n^{|\frac{1}{2}-\frac{1}{p}|}(\log (n+2))^{\beta})$ & This paper\\
 [0.2ex]
 \hline
\end{tabular}
\end{table}
See Table \ref{table} for an overview of these results.
An improvement of \eqref{eq:strongKreisslinear} for general UMD spaces seems to be missing. The main results of this paper give such improvements. Moreover, we recover the results for strongly Kreiss bounded operators from \cite[Theorem 4.5]{CCEL} and \cite[Theorem 1.1]{AC} in the Hilbert and $L^p$-cases, respectively. The following two results (see Corollaries \ref{cor:UMDcase} and \ref{cor:HYcase}) are special cases of our main result:
\begin{itemize}
\item If $X$ is a UMD space, there exists an $\alpha\in (0,1/2)$ depending only on $X$ such that $\|T^n\| = O(n^{\alpha})$;
\item If $X = [Y,H]_{\theta}$ (complex interpolation), where $Y$ is a UMD space and $H$ is a Hilbert  space with $\theta\in (0,1)$, then there exists an $\alpha\in (0,(1-\theta)/2)$ depending only on $X$ such that $\|T^n\| = O(n^{\alpha})$.
\end{itemize}
For instance, the above conclusions can be applied to $L^p$-spaces both in the commutative setting and non-commutative setting.
Improvements of \eqref{eq:strongKreisslinear} for Banach function spaces are discussed in Theorems \ref{thm:mainbfs} and \ref{thm61}.

The previously mentioned improvements of \eqref{eq:strongKreisslinear} follow from one single theorem, in which the main condition on $X$ is formulated in terms of upper and lower estimates for decompositions in the Fourier domain, which we introduce and study in detail. The definitions and properties of these decompositions  will be given in Section \ref{fourierdecomposition}.

 \begin{theorem}\label{thm:mainintro}
Let $X$ be a Banach space which has upper $\ell^{q_0}(L^p)$-decomp\-ositions and lower $\ell^{q_1}(L^p)$-decompositions, where $p\in (1, \infty)$ and $1<q_0\leq q_1<\infty$.  If $T$ is a strongly Kreiss bounded operator on $X$, then there exist constants $C,\beta>0$ depending on $X$ and $T$ such that
    \begin{equation*}
        \|T^n\|\leq C n^{\frac12(\frac{1}{q_0}-\frac{1}{q_1})}(\log (n+2))^{\beta}, \ \ n\geq 1.
    \end{equation*}
\end{theorem}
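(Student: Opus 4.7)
The starting point is the Hille--Yosida characterisation \eqref{k1}, which says that strong Kreiss boundedness of $T$ with constant $K_s$ is equivalent to the entire-function estimate $\|\ee^{\xi T}\|\leq K_s\ee^{|\xi|}$ on $\C$. Applying Cauchy's formula to the Taylor expansion $\ee^{\xi T}=\sum_{k\geq 0}\xi^k T^k/k!$ gives, for any $r>0$,
\begin{equation*}
T^n \;=\; \frac{n!}{2\pi r^n}\int_0^{2\pi}\ee^{-in\theta}\,\ee^{r\ee^{i\theta}T}\,d\theta.
\end{equation*}
Choosing $r=n$ and inserting only the pointwise bound $\|\ee^{r\ee^{i\theta}T}\|\leq K_s\ee^r$ reproduces the baseline $\|T^n\|\lesssim\sqrt{n}$ of \eqref{eq:strongKreisslinear} via Stirling. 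Any refinement must therefore exploit cancellation in $\theta$, which is exactly what the Fourier decomposition hypotheses on $X$ are designed to capture.

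My plan is to view the above integral as extracting the $n$-th Fourier coefficient of the operator-valued periodic function $g(\theta):=\ee^{r\ee^{i\theta}T}$, and to run a frequency-block analysis based on the decompositions introduced in Section \ref{fourierdecomposition}. The lower $\ell^{q_1}(L^p)$-decomposition should give an $L^p(\theta;X)$-bound on each Littlewood--Paley-type block of $g x$ using only the uniform pointwise estimate $\|g(\theta)x\|\leq K_s\ee^r\|x\|$ coming from strong Kreiss boundedness. The upper $\ell^{q_0}(L^p)$-decomposition then reassembles these block estimates into a bound on $T^n x$ (the single Fourier mode of $gx$), at the cost of a logarithm per extracted mode. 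Pairing these opposite indices through H\"older in the sequence index produces the gap $\tfrac{1}{q_0}-\tfrac{1}{q_1}$; the prefactor $\tfrac12$ is inherited from the $\sqrt n$ factor in $n!/n^n$. As a consistency check, $q_0=q_1=2$ gives no polynomial gain and recovers the Hilbert bound $O((\log n)^\beta)$, while $q_0=p\wedge p'$, $q_1=p\vee p'$ reproduces $n^{|\frac12-\frac1p|}$ as in \cite{AC}.

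The main obstacle is making the operator-theoretic input (only pointwise control of $\|\ee^{\xi T}\|$, with no direct derivative information in $\theta$) meet the Banach-space geometric input (frequency decompositions) in the correct norm. In particular, upgrading the pointwise bound on $g(\theta)$ to $L^p(\theta;X)$ bounds on each frequency block will likely require perturbing $r$ slightly away from $n$ so that $g$ becomes effectively concentrated in a window of width $\sim n^{-1/2}$ around $\theta=0$ via a saddle-point argument. A secondary issue is the sharp tracking of the number of relevant dyadic scales, which ultimately determines the exponent $\beta$; once the correct scale-by-scale estimate is in place, summing the geometric series is a routine H\"older computation.
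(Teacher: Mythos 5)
Your starting point — the Cauchy formula with $r=n$, i.e.\ viewing $T^n$ as the $n$-th Fourier coefficient of $g(\theta)=\ee^{n\ee^{2\pi i\theta}T}$ — is morally the same as the paper's key trick, which multiplies $S_n:=\sum_{1\leq m\leq\sqrt n}e_m T^m$ by $\ee^{e_1 nT}$ and exploits the concentration of the weights $n^k/k!$ in a window of width $\sim\sqrt n$ around $k\approx n$. So you have located the right source of cancellation, and you have also guessed the right final exponent. But the way you propose to feed this into the decomposition hypotheses does not work and leaves the central mechanism unexplained.

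There are two concrete gaps. First, applying the lower $\ell^{q_1}(L^p)$-decomposition to $gx$ with only the pointwise input $\|g(\theta)x\|\leq K_s\ee^n\|x\|$ produces the trivial block bounds $\|D_I(gx)\|_{L^p}\leq LK_s\ee^n\|x\|$ — no cancellation has been used — and the upper decomposition does not ``reassemble block estimates into a bound on a single Fourier mode''; it goes from block norms to the norm of the whole function. In the actual proof the lower decomposition is never applied to $T$ at all: via Proposition \ref{prop:dual} it becomes an \emph{upper} $\ell^{q_1'}(L^{p'})$-decomposition for $X^*$, and the argument is run symmetrically for $T^*$; the final extraction of $\|T^n\|$ is the duality pairing $(1+\lfloor\sqrt n\rfloor)\langle x^*,T^{n+1}x\rangle=\int_\T\langle\sum_{k\leq 1+\sqrt n}e_k T^{*k}x^*,\sum_{m\leq 1+\sqrt n}\bar e_m T^{n+1-m}x\rangle\dd t$ combined with $L^p$--$L^{p'}$ H\"older, not H\"older in the sequence index as you suggest. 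Second, and more seriously, your plan has no mechanism producing the $(\log(n+2))^\beta$: ``a logarithm per extracted mode'' is not a computation. The paper obtains it from an iterated bootstrap (Lemma \ref{lem:key} together with Proposition \ref{p5}): given $\|\sum_{1\leq m\leq n}e_m T^m x\|_{L^p}\leq Pn^d\|x\|$, multiplying $S_n$ by $\ee^{e_1nT}$, using the Riesz projection and the Marcinkiewicz multiplier theorem to flatten the non-constant coefficients $b_{n,m}$, and then summing $\sim\sqrt n$ blocks of length $\sim\sqrt n$ via the upper decomposition improves the exponent $d$ to $\tfrac12(d+\tfrac1{q_0}+\gamma_0)$; iterating roughly $\log\log n$ times drives the exponent down to $\tfrac1{q_0}+\gamma_0$ at the cost of $\mathrm{const}^{\log\log n}=(\log n)^\beta$. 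Your saddle-point/perturb-$r$ idea is not needed and does not supply either of these missing ingredients: the localisation takes place on the Fourier side and is handled by a multiplier estimate, not by stationary phase in $\theta$.
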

One can see that $q_0=q_1$ would lead to logarithmic growth. However, this equality can only occur if $X$ is isomorphic to a Hilbert space. This follows from Propositions \ref{prop:Utypecotype} and \ref{prop:Ltypecotype} and Kwapien's theorem (see \cite{HNVW2}).

\bigskip

The structure of this paper is as follows. We explain our main tool: Fourier decomposition properties in Section \ref{fourierdecomposition}. With the help of Fourier decompositions, we can prove our main results in Section \ref{UMD} for general $\UMD$ space and in Section \ref{bfs} for $\UMD$ Banach function spaces. In Section \ref{sec:open} we collect some open problems related to the results of paper.

\section{Fourier Decompositions}\label{fourierdecomposition}
In Theorem \ref{thm:mainintro}, we used an assumption in terms of decompositions in the Fourier domain. In this section, we will introduce these concepts.

\subsection{Fourier Multiplier theory}
We start by briefly recalling some Fourier multiplier theory in the vector-valued setting. For details we refer to \cite{HNVW1}.

Let $\T = [0,1]$ denote the torus, and set $e_n(t):=\ee^{2\pi i nt}$ for $t\in \T$ and $n\in \mathbb{Z}$. For $f\in L^1(\mathbb{T};X)$ and $n\in\mathbb{Z}$, let \[\widehat{f}(n):=\mathcal{F}(f)(n):=\int_{\mathbb{T}}f(t)\overline{e_n(t)}\dd t\]
and for a sequence $a = (a_n)_{n \in \Z} \in \ell^1(\Z;X)$ define
$$
\widecheck{a}:=\mathcal{F}^{-1}(a) := \sum_{n \in \Z}a_ne_n.
$$
Let $\cP(\T;X)$ denote the set of all trigonometric $f:\T\to X$.  The space $\cP(\T;X)$ is dense in $L^p(\T;X)$ for all $p\in [1, \infty)$.

For a bounded sequence $m=(m_n)_{n\in\mathbb{Z}} \in \ell^\infty(\Z;\mathcal{L}(X))$ and $f\in \cP(\T;X)$, we define the Fourier multiplier
$$
T_mf := \mathcal{F}^{-1}(m\widehat{f}),
$$
which is well defined since $\widehat{f}$ is a finitely nonzero sequence.
In this definition, we also interpret a bounded function $m \colon \R \to \calL(X)$ as a sequence by setting $m_n:= m(n)$ for $n \in \Z$.

Let $p \in [1,\infty)$. If there exists a constant $C>0$ such that for all $f\in \cP(\T;X)$,
    $$\|T_m f\|_{L^p(\mathbb{T};X)}\leq C\|f\|_{L^p(\mathbb{T};X)},$$
we call $m$ a \emph{$L^p$-Fourier multiplier}. In this case, $T_m$ can be uniquely extended to a bounded linear operator on $L^p(\mathbb{T};X)$.

In order to have a flexible theory of Fourier multipliers, one needs to restrict to the so-called class of {\em $\UMD$ Banach spaces}. Indeed, already the boundedness of the {\em Hilbert transform} $T_m$ on $L^p(\T;X)$ with $p\in (1, \infty)$ and $m = -i\sgn(\cdot)$ gives a characterization of $\UMD$ spaces. Moreover, the same holds for the {\em Riesz projection} $T_m$ where $m = \one_{[0,\infty)}$.
These results are due to Burkholder and Bourgain, and for details we refer to \cite[Chapter 5]{HNVW1}.

We will not give the precise definition of UMD spaces here and one may regard the above characterization as a definition. The following properties of UMD spaces will be used throughout this paper \cite[Chapter 4]{HNVW1}:
\begin{itemize}
\item UMD spaces are (super)reflexive.
\item If $X$ is UMD, then $X^*$ is UMD.
\end{itemize}
Moreover, for $p \in (1,\infty)$, we will write $R_{p,X}:= \norm{T_{\one_{[0,\infty)}}}_{\calL(L^p(\T;X))}.$

\bigskip

We say that a sequence $(a_n)_{n\in \mathbb{Z}}$ of complex numbers is  of \emph{bounded variation} if
\[[a]_{V^1}:=\sum_{n\in \mathbb{Z}}|a_{n+1}-a_n|<\infty,\]
and we denote by $V^1$ the space of all such sequences.
The following vector-valued analogue of the Marcinkiewicz multiplier theorem is due to \cite{Bourgain714}. A corresponding version on $\R$ with  operator-valued $m$ can be found in \cite[Theorem 8.3.9]{HNVW2}. Via transference, the periodic case can also be derived from \cite{HNVW2}.
\begin{lemma}\label{m16}
     Let $X$ be a $\UMD$ space and $1<p<\infty$. If $m\in \ell^\infty \cap V^1$, then $m$ is a Fourier multiplier, and there exists a constant $M_{p,X}\geq 1$ such that
     \[\|T_m\|
     \leq M_{p,X} \bigl(\|m\|_{\ell^\infty}+[m]_{V^1}\bigr).\]
\end{lemma}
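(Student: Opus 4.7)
The plan is to reduce the boundedness of $T_m$ to the boundedness of the Riesz projection via an Abel summation argument. Since $[m]_{V^1} < \infty$, the sequence $(m_n)_{n \in \Z}$ is Cauchy in both directions, so the limits $m_{\pm \infty} := \lim_{n \to \pm \infty} m_n$ exist and satisfy $|m_{\pm\infty}| \leq \|m\|_{\ell^{\infty}}$. The UMD assumption enters only once, through the fact that $R_{p,X} = \|T_{\one_{[0,\infty)}}\|_{\calL(L^p(\T;X))} < \infty$.

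First I would carry out the decomposition. A telescoping computation gives, for every $k \in \Z$,
\[
m_k = m_{-\infty} + \sum_{n \leq k-1} (m_{n+1}-m_n) = m_{-\infty} + \sum_{n \in \Z} (m_{n+1}-m_n)\,\one_{[n+1,\infty)}(k).
\]
Passing to the multiplier operators (and checking convergence on the dense subspace $\cP(\T;X)$, where all sums are finite), this yields
\[
T_m = m_{-\infty}\, I + \sum_{n \in \Z}(m_{n+1}-m_n)\, T_{\one_{[n+1,\infty)}}.
\]

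Next I would observe that each summand on the right is a modulated Riesz projection. Indeed, the operator $M_j : f \mapsto e_j f$ is an isometry on $L^p(\T;X)$ for every $j \in \Z$, and a direct frequency-shift computation gives $T_{\one_{[n+1,\infty)}} = M_{n+1}\, T_{\one_{[0,\infty)}}\, M_{-(n+1)}$, so that
\[
\bigl\|T_{\one_{[n+1,\infty)}}\bigr\|_{\calL(L^p(\T;X))} = R_{p,X} \qquad \text{for all } n \in \Z.
\]

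Finally I would assemble the bound. Since $\sum_n |m_{n+1}-m_n| = [m]_{V^1} < \infty$, the series for $T_m$ converges in operator norm, and the triangle inequality yields
\[
\|T_m\| \leq |m_{-\infty}| + R_{p,X}\,[m]_{V^1} \leq R_{p,X}\bigl(\|m\|_{\ell^\infty} + [m]_{V^1}\bigr),
\]
so one can take $M_{p,X} = R_{p,X} \geq 1$. There is no real obstacle here; the only delicate point is making the shift-invariance argument for the Riesz-type projections precise, which is routine. The conceptual content is entirely in the UMD hypothesis, which is invoked exactly once to bound the unshifted Riesz projection.
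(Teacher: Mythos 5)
Your proof is correct, but it takes a genuinely different (and more elementary) route than the paper. The paper proves Lemma~\ref{m16} by invoking the vector-valued Marcinkiewicz multiplier theorem, citing Bourgain's original work and \cite[Theorem 8.3.9]{HNVW2}; that theorem is a deep result built on vector-valued Littlewood--Paley theory and $R$-boundedness, and in its usual form it only requires uniformly bounded variation on \emph{dyadic} blocks rather than finite total variation over all of $\Z$. Your argument exploits the fact that the stated hypothesis $m \in V^1$ is strictly stronger: when the total variation is summable, the Abel-summation identity
$m_k = m_{-\infty} + \sum_{n\in\Z}(m_{n+1}-m_n)\one_{[n+1,\infty)}(k)$
reduces everything to shifted Riesz projections, and the modulation-conjugation identity $T_{\one_{[n+1,\infty)}} = M_{n+1}T_{\one_{[0,\infty)}}M_{-(n+1)}$ shows these all have norm exactly $R_{p,X}$. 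This gives $\|T_m\| \leq |m_{-\infty}| + R_{p,X}[m]_{V^1} \leq R_{p,X}(\|m\|_{\ell^\infty} + [m]_{V^1})$, and since the Riesz projection is a nonzero projection, $R_{p,X}\geq 1$. The trade-off is clear: the paper's citation covers the sharper dyadic-variation hypothesis at no extra cost and fixes the exact reference for the interested reader, while your argument is self-contained, uses the UMD hypothesis only through the single fact that the Riesz projection is bounded, and yields the explicit constant $M_{p,X}=R_{p,X}$. Both are valid; yours is the proof one would give if the lemma were to be proved from scratch under the stated hypotheses.
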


\subsection{\texorpdfstring{$\ell^q(L^p)$}{lqLp}-Fourier decompositions}
  After the above preparation we can now introduce the $\ell^q(L^p)$-Fourier decompositions.
For an interval $I\subseteq \Z$ and $f\in \cP(\T;X)$, define
\[D_I f:=  T_{\one_I}f =\mathcal{F}^{-1}(\one_{I}\widehat{f}).\]
A family $\cI$ of subsets of $\Z$ is called an {\em interval partition} if it is a partition of $\Z$ and each $I\in \cI$ is an interval.

\begin{definition}\label{def:decom}
Let $X$ be a Banach space and $p,q\in [1,\infty]$.
\begin{enumerate}[(i)]
\item The space $X$ is said to have \emph{upper $\ell^q(L^p)$-decompositions} if there exists a constant $U>0$ such that for each interval partition $\cI$ and all $f\in \cP(\T;X)$,
  \begin{align*}
 \|f\|_{L^{p}(\mathbb{T};X)}  \leq  U \, \big(\sum_{I\in\cI}\|D_{I}f\|_{L^{p}(\mathbb{T};X)}^{q}\big)^{\frac{1}{q}}.
  \end{align*}
  \item\label{it:decom2} The space $X$ is said to have \emph{lower $\ell^q(L^p)$-decompositions} if there exists a constant $L>0$ such that for each interval partition $\cI$ and all $f\in \cP(\T;X)$,
      \begin{align*}
\big(\sum_{I\in\cI}\|D_{I}f\|_{L^{p}(\mathbb{T};X)}^{q}\big)^{\frac{1}{q}}\leq  L \, \|f\|_{L^{p}(\mathbb{T};X)}.
    \end{align*}
\end{enumerate}
\end{definition}

  By the triangle inequality, it is clear that every Banach space has upper $\ell^1(L^p)$-decompositions  for any $p\in [1,\infty]$. Moreover, if $X$ is nonzero, then $q<\infty$ and in fact $q\leq 2\wedge p'$ for the upper decompositions (see Proposition \ref{prop:Utypecotype}).

 Any $\UMD$ Banach space has lower $\ell^{\infty}(L^p)$-decompositions for  $p\in (1,\infty)$, which follows from the boundedness of the Riesz projection. In Theorem \ref{thm:nontrivialBourgain} we shall see that this can be improved. Moreover, the UMD property and $p\in (1, \infty)$ cannot be avoided for the lower decompositions. Indeed, if there exist $p,q\in [1, \infty]$ such that (a nonzero) $X$ has lower $\ell^q(L^p)$-decompositions, then $p\in (1, \infty)$, and $X$ is a UMD space. To see this, take $I_1 = \Z_+$ and $I_2 = \Z_-\setminus\{0\}$. Definition \ref{def:decom}\ref{it:decom2}  immediately implies that for all $f\in \cP(\T;X)$,
  \[\|D_{I_1} f\|_{L^p(\T;X)} \leq L  \,\|f\|_{L^p(\T;X)},\]
which gives the boundedness of the Riesz projection. Thus, $X$ is a UMD space and $p \in (1,\infty)$.

  \subsection{Basic Properties}
Let us discuss some basic properties of the upper and lower decompositions.
We start with a simple duality result.
\begin{proposition}[Duality]\label{prop:dual}
     Let $X$ be a Banach space, $p\in (1, \infty)$ and $q\in [1, \infty]$. The following are equivalent:
\begin{enumerate}[(1)]
   \item\label{it:dual1} $X$ is a $\UMD$ space which has upper $\ell^q(L^p)$-decompositions.
\item\label{it:dual2} $X^*$ has lower $\ell^{q'}(L^{p'})$-decompositions.
    \end{enumerate}
\end{proposition}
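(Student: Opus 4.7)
The proof is a standard duality argument built on two elementary identities. First, the decomposition operators are formally self-adjoint with respect to the natural pairing: $\langle D_I f, g\rangle = \langle f, D_I g\rangle$ for $f\in\cP(\T;X)$, $g\in\cP(\T;X^*)$. Second, for any interval partition $\cI$, the $D_I$ are orthogonal projections: $D_I D_J = \delta_{IJ} D_I$. With these in hand, and using that a UMD space is reflexive so $(L^p(\T;X))^*=L^{p'}(\T;X^*)$, everything reduces to combining the Hölder inequality in the spatial variable with the Hölder inequality in the interval index.

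For the direction \ref{it:dual2}$\Rightarrow$\ref{it:dual1}, note first that the remark following Definition \ref{def:decom} applied to $X^*$ yields $p'\in(1,\infty)$ (hence $p\in(1,\infty)$) and $X^*$ UMD; reflexivity of $X^*$ then gives $X=X^{**}$ UMD. For $f\in\cP(\T;X)$, test against an arbitrary $g\in\cP(\T;X^*)$ with $\|g\|_{L^{p'}(\T;X^*)}\leq 1$. Only finitely many $D_I f$ and $D_I g$ are nonzero, so by self-adjointness and orthogonality,
\begin{align*}
|\langle f, g\rangle| = \Bigl|\sum_{I\in\cI}\langle D_I f, D_I g\rangle\Bigr| \leq \Bigl(\sum_{I\in\cI}\|D_I f\|_{L^p(\T;X)}^{q}\Bigr)^{1/q}\Bigl(\sum_{I\in\cI}\|D_I g\|_{L^{p'}(\T;X^*)}^{q'}\Bigr)^{1/q'}.
\end{align*}
The assumed lower $\ell^{q'}(L^{p'})$-decomposition bounds the second factor by $L$. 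Taking the supremum over $g$ gives the upper $\ell^{q}(L^p)$-decomposition with the same constant.

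For \ref{it:dual1}$\Rightarrow$\ref{it:dual2}, fix $g\in\cP(\T;X^*)$ and an interval partition $\cI$; only finitely many $I\in\cI$ carry mass. By $\ell^q$--$\ell^{q'}$ duality,
\[\Bigl(\sum_{I\in\cI}\|D_I g\|_{L^{p'}(\T;X^*)}^{q'}\Bigr)^{1/q'}=\sup\sum_{I\in\cI} a_I\,\|D_I g\|_{L^{p'}(\T;X^*)},\]
the supremum over finitely supported $(a_I)$ with $a_I\geq 0$ and $\|(a_I)\|_{\ell^q}\leq 1$. For each contributing $I$ pick, using reflexivity and density of trigonometric polynomials, an $f_I\in\cP(\T;X)$ with $\|f_I\|_{L^p(\T;X)}=1$ and $\langle f_I, D_I g\rangle\geq (1-\e)\|D_I g\|_{L^{p'}(\T;X^*)}$. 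Form $h:=\sum_{I\in\cI} a_I D_I f_I\in\cP(\T;X)$. The orthogonality identity gives $D_J h = a_J D_J f_J$, so by the upper decomposition hypothesis and the uniform bound $\|D_J\|_{\calL(L^p(\T;X))}\leq 3M_{p,X}$ coming from Lemma \ref{m16} (since $\|\one_J\|_{\ell^\infty}\leq 1$ and $[\one_J]_{V^1}\leq 2$),
\[\|h\|_{L^p(\T;X)}\leq U\Bigl(\sum_{J}|a_J|^q\|D_J f_J\|_{L^p(\T;X)}^q\Bigr)^{1/q}\leq 3U M_{p,X}\,\|(a_I)\|_{\ell^q}\leq 3UM_{p,X}.\]
Pairing with $g$ via self-adjointness yields $(1-\e)\sum_I a_I\|D_I g\|_{L^{p'}(\T;X^*)}\leq \langle h, g\rangle\leq 3UM_{p,X}\|g\|_{L^{p'}(\T;X^*)}$, and letting $\e\to 0$ and taking the sup over $(a_I)$ gives the lower $\ell^{q'}(L^{p'})$-decomposition with constant $3UM_{p,X}$.

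The only non-bookkeeping step is the invocation of Lemma \ref{m16} to get the uniform boundedness of $D_I$ on $L^p(\T;X)$; everything else is Hölder plus the adjoint/orthogonality relations for the $D_I$. There is no subtle obstacle beyond tracking that $g$ (or $f$) is a trigonometric polynomial so that the decomposition sums are automatically finite, which is why the proof works on the dense subspace $\cP(\T;X)$ and hence everywhere by continuity.
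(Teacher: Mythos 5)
Your proof is correct and takes essentially the same duality route as the paper: Hölder in the spatial variable and in the interval index, combined with the self-adjointness/orthogonality identity $\langle f,g\rangle=\sum_I\langle D_I f,D_I g\rangle$ on $\cP(\T;X)\times\cP(\T;X^*)$. The only cosmetic difference is in \ref{it:dual1}$\Rightarrow$\ref{it:dual2}, where you invoke Lemma \ref{m16} to bound $\|D_J\|\leq 3M_{p,X}$ while the paper writes $\one_J$ as a difference of two shifted Riesz projections to get $\|D_J\|\leq 2R_{p,X}$, and where you phrase the supremum over $(f_I)$ via explicit near-maximizers and $\ell^q$--$\ell^{q'}$ duality rather than citing \cite[Proposition 1.3.1]{HNVW1} directly.
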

\begin{proof}
We only consider the case $q\in (1, \infty)$.

\ref{it:dual2}$\Rightarrow$\ref{it:dual1}:
We already noted that $X$ is a $\UMD$ space.
To show the upper estimate, let $\cI$ be an interval partition, let $f\in \cP(\T;X)$  and $g\in \cP(\T;X^*)$.
 By H{\"o}lder's inequality and the assumption, we have
\begin{align*}
    \big|\lb f,{g}\rb _{L^{p}(\mathbb{T};X),L^{p'}(\mathbb{T};X^*)}\big|&=\Big|\int_{\mathbb{T}}\lb f,{g}\rb_{X,X^*}\dd t\Big|
    =\Big|\int_{\mathbb{T}}\sum_{I\in \cI}\lb D_{I}f,D_{I}{g}\rb_{X,X^*}\dd t\Big|\\
    &\leq \sum_{I \in \cI}\|D_{I}f\|_{L^p(\mathbb{T};X)}\|D_{I}{g}\|_{L^{p'}(\mathbb{T};X^*)}\\
    &\leq \Big(\sum_{I\in\cI}\|D_{I}f\|_{L^p(\mathbb{T};X)}^{q}\Big)^{\frac{1}{q}}
    \Big(\sum_{I\in\cI}\|D_{I}{g}\|_{L^{p'}(\mathbb{T};X^*)}^{q'}\Big)^{\frac{1}{q'}}\\
    &\leq L\Big(\sum_{I\in\cI}\|D_{I}f\|_{L^{p}(\mathbb{T};X)}^{q}\Big)^{\frac{1}{q}}\|g\|_{L^{p'}(\mathbb{T};X^*)}.
\end{align*}
Taking the supremum over all ${g}$ which satisfy $\|g\|_{L^{p'}(\mathbb{T};X^*)}\leq 1$, it follows from \cite[Proposition 1.3.1]{HNVW1}
that $X$ has upper $\ell^q(L^p)$-decompositions.

\ref{it:dual1}$\Rightarrow$\ref{it:dual2}: Let $g\in \cP(\T;X^*)$. Let $f_I\in \cP(\T;X)$ for $I\in\cI$, where we suppose that only finitely many $f_I$ are nonzero. Let $f:=\sum_{I\in\cI} D_{I}f_I$. H{\"o}lder's inequality and the assumption give that
\begin{align*}
\Big|\sum_{I\in\cI} \int_{\mathbb{T}}\lb f_I,D_{I}{g}\rb_{X,X^*}\dd t\Big|&=\Big|\int_{\mathbb{T}} \lb f,{g}\rb_{X,X^*}\dd t\Big|\leq \|f\|_{L^{p}(\mathbb{T};X)}\|{g}\|_{L^{p'}(\mathbb{T};X^*)}\\
&\leq U \Big(\sum_{I \in \cI}\|D_{I}f\|_{L^{p}(\mathbb{T};X)}^{q}\Big)^{\frac{1}{q}}\|{g}\|_{L^{p'}(\mathbb{T};X^*)}\\
&\leq 2 U R_{p,X} \Big(\sum_{I \in \cI}\|f_I\|_{L^{p}(\mathbb{T};X)}^{q}\Big)^{\frac{1}{q}}\|{g}\|_{L^{p'}(\mathbb{T};X^*)}.
\end{align*}
where, in the last step, we applied the boundedness of the Riesz projection.
Taking the supremum over all $(f_I)_{I \in \cI}$ such that $\sum_{I\in \cI}\|f_I\|_{L^{p}(\mathbb{T};X)}^{q}\leq 1$, it follows from \cite[Proposition 1.3.1]{HNVW1} that $X^*$ has lower $\ell^{q'}(L^{p'})$-decompositions.
\end{proof}

In the following proposition, we show that one can trade $\ell^q$-summability for polynomial growth in the number of intervals in the decomposition properties,
which seems like a natural way to prove upper decompositions. A similar result holds for the lower decompositions case.
\begin{proposition}[$\ell^q$-summability versus growth $\alpha$]\label{prop:growthtrick}
Let $X$ be a Banach space and let $p,q \in [1,\infty]$.
\begin{enumerate}[(1)]
\item \label{it:alpha1} If $X$ has upper $\ell^{q}(L^{p})$-decompositions, then
there exists a constant $U>0$ such that for  $r\in [q, \infty]$, all finite families of disjoint intervals $\cI$ and  $f\in \cP(\T;X)$ with support in $\cup \{I\in \cI\}$,
\begin{align*}
 \| f\|_{L^{p}(\mathbb{T};X)}  \leq  U \,(\#\cI)^{\frac{1}{q}-\frac{1}{r}} \Big(\sum_{I\in\cI}\|D_{I}f\|_{L^{p}(\mathbb{T};X)}^{r}\Big)^{\frac{1}{r}}.
  \end{align*}

\item \label{it:alpha2} Conversely, if there exists an $r\in (q, \infty]$ and
a constant $U>0$ such that for all finite families of disjoint intervals $\cI$ and  $f\in \cP(\T;X)$ with support in $\cup \{I\in \cI\}$,
\begin{align*}
 \|f\|_{L^{p}(\mathbb{T};X)}  \leq  U \,(\#\cI)^{\frac1q-\frac{1}{r}} \Big(\sum_{I\in\cI}\|D_{I}f\|_{L^{p}(\mathbb{T};X)}^{r}\Big)^{\frac{1}{r}},
  \end{align*}
then $X$ has upper $\ell^{s}(L^{p})$-decompositions for  $1\leq s<q$.
\end{enumerate}
\end{proposition}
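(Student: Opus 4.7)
For part \ref{it:alpha1}, the plan is to reduce immediately to an interval partition and then apply H\"older's inequality. Given a finite family of disjoint intervals $\cI$ and $f\in\cP(\T;X)$ with Fourier support in $\cup\{I:I\in\cI\}$, I would extend $\cI$ to an interval partition $\cI'$ of $\Z$ by adjoining the (at most $\#\cI+1$) complementary intervals of $\Z\setminus\cup\cI$. Since $\widehat{f}$ vanishes on these complementary intervals, $D_If=0$ for every $I\in\cI'\setminus\cI$, so the upper $\ell^{q}(L^p)$-decomposition yields
\[
\|f\|_{L^p(\T;X)}\leq U\Big(\sum_{I\in\cI'}\|D_If\|_{L^p(\T;X)}^q\Big)^{1/q}=U\Big(\sum_{I\in\cI}\|D_If\|_{L^p(\T;X)}^q\Big)^{1/q}.
\]
The claim then follows by H\"older's inequality on $\cI$ with exponents $r/q$ and $(r/q)'$, which gives the factor $(\#\cI)^{1/q-1/r}$ (the case $r=\infty$ being the trivial bound by the maximum times $(\#\cI)^{1/q}$).

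For part \ref{it:alpha2}, the approach is a dyadic layer-cake argument. Fix an interval partition $\cI$ and $f\in\cP(\T;X)$; since $f$ is a trigonometric polynomial, only finitely many $D_If$ are nonzero, so $M:=\max_{I\in\cI}\|D_If\|_{L^p(\T;X)}$ is well defined. Group the intervals into dyadic layers
\[
\cA_k:=\bigl\{I\in\cI:2^{-k-1}M<\|D_If\|_{L^p(\T;X)}\leq 2^{-k}M\bigr\},\qquad k\in\N_0,
\]
each of which is finite, and write $f_k:=\sum_{I\in\cA_k}D_If$, so that $f=\sum_k f_k$ (only finitely many terms nonzero). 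Applying the hypothesis to $f_k$ and the finite family $\cA_k$, and using $\|D_If\|\leq 2^{-k}M$ on $\cA_k$, gives
\[
\|f_k\|_{L^p(\T;X)}\leq U(\#\cA_k)^{\frac1q-\frac1r}\Big(\sum_{I\in\cA_k}\|D_If\|^r\Big)^{\frac1r}\leq U\cdot 2^{-k}M\,(\#\cA_k)^{1/q}.
\]
Setting $S:=\sum_{I\in\cI}\|D_If\|_{L^p(\T;X)}^s$, the lower half of each layer yields $\#\cA_k\leq 2^{(k+1)s}M^{-s}S$, whence
\[
\|f_k\|_{L^p(\T;X)}\leq U\cdot 2^{s/q}\,M^{1-s/q}S^{1/q}\,2^{k(s/q-1)}.
\]
Since $s<q$, the geometric series $\sum_k 2^{k(s/q-1)}$ converges, giving $\|f\|_{L^p(\T;X)}\leq C\,M^{1-s/q}S^{1/q}$. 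Finally, using $M\leq S^{1/s}$, the exponents combine to $\tfrac{1-s/q}{s}+\tfrac1q=\tfrac1s$, producing the desired upper $\ell^{s}(L^p)$-decomposition with constant depending on $U$, $q$, $r$, and $s$.

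The main obstacle is in part \ref{it:alpha2}: one has to convert an $\ell^r$-type inequality with a polynomial loss in $\#\cI$ into a genuine $\ell^s$-inequality with no cardinality factor, and there is no room to lose any power of $\#\cI$ in the final sum. The dyadic grouping handles this because it produces \emph{two} independent estimates on $\#\cA_k$ and on $\|D_If\|$ on $\cA_k$, and the trade-off between them — combined with the crude bound $M\leq S^{1/s}$ — exactly balances all powers. The strict inequality $s<q$ is essential: it is precisely what makes the geometric factor $2^{k(s/q-1)}$ summable, and one sees immediately from the argument that $s=q$ would fail.
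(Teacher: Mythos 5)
Your proof is correct, and it establishes the proposition cleanly. Part \ref{it:alpha1} is the same as the paper's one-line observation (extend $\cI$ to a partition of $\Z$, note the extra $D_I f$ vanish, apply H\"older). For part \ref{it:alpha2} you and the paper use two different but closely related dyadic organizing principles. The paper's proof sorts the pieces $f_k=D_{I_k}f$ so that $\|f_k\|_{L^p}$ is nonincreasing, groups them into blocks of indices $[2^{j-1},2^j)$, applies the hypothesis to each block (so the cardinality factor is exactly $2^{(j-1)(1/q-1/r)}$), bounds each block by $U 2^{(j-1)/q}\|f_{2^{j-1}}\|_{L^p}$ using monotonicity, and then converts $\|f_{2^{j-1}}\|_{L^p}\leq 2^{-(j-1)/s}\bigl(\sum_k\|f_k\|^s\bigr)^{1/s}$ to sum the geometric series. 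Your argument instead performs a layer-cake decomposition in the magnitude of $\|D_If\|_{L^p}$: the layers $\cA_k$ give simultaneously an upper bound on each $\|D_If\|$ in the layer and an $\ell^s$-based bound on $\#\cA_k$, and the two trade off against the cardinality factor. Both arguments rely on $s<q$ to make the resulting geometric series converge. The paper's version organizes by rank, yours organizes by level set; conceptually they are dual bookkeeping devices for the same self-improvement trick, and neither is more general. Your closing step $M\leq S^{1/s}$ is correct (and harmless since $1-s/q>0$), and the exponent algebra $\frac{1-s/q}{s}+\frac1q=\frac1s$ checks out.
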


\begin{proof}
Note that \ref{it:alpha1} follows directly from H\"{o}lder's inequality. For \ref{it:alpha2},  let $\cI = \{I_{k}:k\geq 1\}$ be an interval partition of $\Z$. For a trigonometric polynomial $f:\T\to X$, set $f_k:=D_{I_k}f$. Without loss of generality, we may assume that $$\|f_k\|_{L^{p}(\mathbb{T};X)}\ge \|f_{k+1}\|_{L^{p}(\mathbb{T};X)}, \qquad k \geq 1.$$ By the triangle inequality and the assumption, we get
    \begin{align*}
       \| f\|_{L^{p}(\mathbb{T};X)}&= \Big\|\sum_{k=1}^{\infty}f_k\Big\|_{L^{p}(\mathbb{T};X)}
       \leq \sum_{j=1}^\infty\Big\|\sum^{2^j-1}_{k=2^{j-1}}f_k\Big\|_{L^{p}(\mathbb{T};X)}\\
       & \leq  U \sum_{j=1}^\infty 2^{(j-1)(\frac1q-\frac{1}{r}) }\Big(\sum^{2^j-1}_{k=2^{j-1}}\|f_k\|_{L^{p}(\mathbb{T};X)}^r\Big)^{\frac{1}{r}}\\
        &\leq  U\sum_{j=1}^\infty2^{(j-1)\frac1q}\|f_{2^{j-1}}\|_{L^{p}(\mathbb{T};X)}\\
        &\leq U\sum_{j=1}^\infty2^{(j-1)(\frac{1}{q}-\frac{1}{s})}\cdot \Big(\sum_{k=1}^\infty\|f_k\|^{s}_{L^p(\T;X)}\Big)^{\frac{1}{s}}.
    \end{align*}
Since $\frac{1}{q}-\frac{1}{s}<0$, assertion \ref{it:alpha2}  follows.
\end{proof}

In the next proposition, we discuss a complex interpolation result for the decomposition properties.
\begin{proposition}[Interpolation]\label{prop:interpolation}
Let $(X_0, X_1)$ be an interpolation couple of  $\UMD$ spaces. Let $p_0, p_1 \in (1,\infty)$ and $q_0, q_1\in [1, \infty]$. Let $\theta\in (0,1)$, set $X_{\theta} = [X_0, X_1]_{\theta}$ and
\[\frac{1}{p} = \frac{1-\theta}{p_0} + \frac{\theta}{p_1}, \qquad \frac{1}{q} = \frac{1-\theta}{q_0} + \frac{\theta}{q_1}.\]
If $X_i$ has upper (lower) $\ell^{q_i}(L^{p_i})$-decompositions for $i=0,1$, then $X_{\theta}$ has upper (lower) $\ell^{q}(L^{p})$-decompositions.
\end{proposition}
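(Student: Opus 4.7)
My plan is to rephrase both decomposition properties as uniform boundedness (in the partition $\cI$) of linear operators acting between vector-valued Lebesgue and $\ell^q$-sequence spaces, and then to apply the Calder\'on complex interpolation theorem. For each interval partition $\cI$, I would consider the two natural operators
\[
D_{\cI}\colon f \longmapsto (D_I f)_{I\in\cI}, \qquad
S_{\cI}\colon (g_I)_{I\in\cI} \longmapsto \sum_{I \in \cI} D_I g_I.
\]
A short calculation shows that $X$ has lower $\ell^q(L^p)$-decompositions with constant $L$ if and only if $D_\cI\colon L^p(\T;X)\to \ell^q(\cI;L^p(\T;X))$ is bounded by $L$ uniformly in $\cI$. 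For the upper case, the identities $D_I^2=D_I$ and $D_JD_I=0$ for disjoint $I\neq J$ give $D_J(S_\cI g)=D_J g_J$, so upper $\ell^q(L^p)$-decompositions (constant $U$) yield uniform boundedness of $S_\cI\colon \ell^q(\cI;L^p(\T;X))\to L^p(\T;X)$ by $U R_{p,X}$; conversely, testing $S_\cI$ at $(D_I f)_{I}$ reconstructs $f$, so uniform boundedness of $S_\cI$ implies upper decompositions with the same constant.

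Since UMD is preserved under complex interpolation, $X_\theta$ is UMD, and $D_I=T_{\one_I}$ acts consistently on $L^{p_0}(\T;X_0)\cap L^{p_1}(\T;X_1)$ and on the corresponding $\ell^{q_i}$-sequence spaces; thus $D_\cI$ and $S_\cI$ are compatible operators for complex interpolation on the couples $(L^{p_0}(\T;X_0),L^{p_1}(\T;X_1))$ and $(\ell^{q_0}(\cI;L^{p_0}(\T;X_0)),\ell^{q_1}(\cI;L^{p_1}(\T;X_1)))$. Combining the standard Calder\'on identities for vector-valued Lebesgue spaces and $\ell^q$-valued sequences gives
\[
[L^{p_0}(\T;X_0),L^{p_1}(\T;X_1)]_\theta = L^p(\T;X_\theta),
\]
\[
[\ell^{q_0}(\cI;L^{p_0}(\T;X_0)),\ell^{q_1}(\cI;L^{p_1}(\T;X_1))]_\theta = \ell^q(\cI;L^p(\T;X_\theta)).
\]
Interpolating the uniform endpoint bounds on $D_\cI$ (respectively $S_\cI$) then produces uniform bounds in the $\theta$-scale, which is exactly the lower (respectively upper) $\ell^q(L^p)$-decomposition property for $X_\theta$.

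The main technical obstacle is that the second Calder\'on identity can fail when some $q_i=\infty$: the interpolation space is then only the closure of finitely supported sequences inside $\ell^q$, which is strictly smaller when the partition is infinite. This is not a genuine issue here, because trigonometric polynomials have only finitely many nonzero Fourier coefficients, so only finitely many $D_I f$ are nonzero, and it suffices to verify the decomposition inequalities for finite sub-partitions and finitely supported $(g_I)$, on which the required $\ell^q$-interpolation identity is standard. A short density step (trigonometric polynomials valued in $X_0\cap X_1$ are dense in $\cP(\T;X_\theta)$ in the relevant norms) then transfers the inequality back to $\cP(\T;X_\theta)$, completing the plan.
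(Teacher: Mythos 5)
Your argument is essentially the same as the paper's: both encode the lower and upper decomposition properties as uniform bounds for the operators $f\mapsto (D_If)_{I\in\cI}$ and $(g_I)_I\mapsto\sum_I D_I g_I$, respectively, and then invoke complex interpolation on the couples $(L^{p_i}(\T;X_i))$ and $(\ell^{q_i}(\cI;L^{p_i}(\T;X_i)))$, using $D_J(S_\cI g)=D_Jg_J$ plus the Riesz projection bound to pass back and forth. The only minor difference is bookkeeping around the $q_i=\infty$ endpoint, where the paper reduces to $q<\infty$ in the lower case by noting $X_\theta$ is UMD and invokes Proposition \ref{prop:Utypecotype} to get $q_0,q_1<\infty$ in the upper case, whereas you handle it by restricting to finite sub-partitions and a density step; both resolutions are sound.
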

\begin{proof}
We start with the proof of the lower case. Since $X_{\theta}$ is a UMD space, it has lower $\ell^\infty(L^p)$-decompositions. Thus we may assume without loss of generality that $q<\infty$ and thus $\min\{q_0, q_1\}<\infty$. Let $\cI$ be an interval partition of $\Z$. Let $$T:L^{p_i}(\T;X_i)\to \ell^{q_i}(\cI;L^{p_i}(\T;X_i))$$ be given by $T f = (D_{I}f)_{I\in \cI}$ for $i= 0,1$. From the assumption we see that $T$ is bounded of norm $L_i$ for $i=0,1$. Therefore, by complex interpolation (see \cite[Theorem 2.2.6]{HNVW1}), we obtain that $T:L^{p}(\T;X_\theta)\to \ell^{q}(L^{p}(\T;X_\theta))$ is bounded and
\[\|T\|_{\mathcal{L}(L^{p}(\T;X_\theta), \ell^{q}(L^{p}(\T;X_\theta)))}\leq L_0^{1-\theta} L_1^{\theta}.\]
This gives the required result.

For the upper case, in Proposition \ref{prop:Utypecotype}, we will show that $q_0,q_1<\infty$. Let $T \colon \ell^{q_i}(\cI;L^{p_i}(\T;X_i)) \to L^{p_i}(\T;X_i)$ be given by
$$
T((f_{I})_{I \in \cI}):=  \sum_{I \in \cI} D_I f_I
$$
for $i=0,1$. Note that
\begin{align*}
  \norm{T((f_{I})_{I \in \cI})}_{L^{p_i}(\T;X_i)} &= \nrms{\sum_{I \in \cI} D_I f_I}_{L^{p_i}(\T;X_i)}\\
  &\leq U_i \Bigl(\sum_{J \in \cI}\nrms{D_J \sum_{I \in \cI} D_I f_I}_{L^{p_i}(\T;X_i)}^{q_i}\Bigr)^{\frac1{q_i}}\\
  &\leq 2 U_i R_{p_i,X_i}\norm{(f_{I})_{I \in \cI}}_{\ell^{q_i}(\cI;L^{p_i}(\T;X_i))}.
\end{align*}
Therefore, again by complex interpolation (see \cite[Theorem 2.2.6]{HNVW1}), we obtain that $T \colon \ell^{q}(\cI;L^{p}(\T;X_{\theta})) \to L^{p}(\T;X_{\theta})$ is bounded. Applying this to $(f_{I})_{I \in \cI} = (D_If)_{I \in \cI}$ for $f \in \cP(\T;X_{\theta})$ yields the result.
\end{proof}

With a similar method we obtain the following ``extrapolation result''.
\begin{proposition}[Extrapolation]\label{prop:extrapolation-p}
    Let $X$ be a $\UMD$ space, $p\in (1, \infty)$, and $q\in (1, \infty)$.
\begin{enumerate}[(1)]
\item\label{it:extrap1}  If $X$ has upper $\ell^q(L^p)$-decompositions, then $X$ has upper $\ell^{s}(L^r)$-decomp\-ositions for every $s\in[1,q)$  and $r \in (1,\infty)$ such that
\[s' = \frac{q'}{\theta}\quad  \text{for} \quad \theta < \min\Big\{\frac{p}r, \frac{p'}{r'}\Big\}.\]
\item\label{it:extrap2} If $X$ has lower $\ell^q(L^p)$-decompositions, then $X$ has lower $\ell^{s}(L^r)$-decomp\-ositions for every $s\in(q, \infty]$ and $r \in (1,\infty)$ such that
\[s = \frac{q}{\theta} \quad \text{for} \quad \theta < \min\Big\{\frac{p}r, \frac{p'}{r'}\Big\}.\]
\end{enumerate}
\end{proposition}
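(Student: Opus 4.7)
The plan is to prove the lower case (2) directly via complex interpolation and then deduce the upper case (1) from it by duality. Specifically, given the hypothesis of (1), Proposition~\ref{prop:dual} yields that $X^{*}$ has lower $\ell^{q'}(L^{p'})$-decompositions. Applying (2) to $X^{*}$ produces lower $\ell^{s'}(L^{r'})$-decompositions whenever $\theta<\min\{p'/r',\,p/r\}$, which is exactly the condition in (1). A second application of Proposition~\ref{prop:dual} transfers this back to upper $\ell^{s}(L^{r})$-decompositions for $X$, completing the reduction.

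To prove (2), fix $r\in(1,\infty)$ and $\theta\in(0,1)$ with $\theta<\min\{p/r,\,p'/r'\}$, and introduce the auxiliary exponent $r_{0}$ by
\[
\frac{1-\theta}{r_{0}}=\frac{1}{r}-\frac{\theta}{p}.
\]
A short computation shows that the two strict inequalities on $\theta$ translate precisely into $r_{0}>0$ and $r_{0}>1$, so $r_{0}\in(1,\infty)$, and $s:=q/\theta$ lies in $(q,\infty)$. For an arbitrary interval partition $\cI$ of $\Z$, consider the operator $\Phi_{\cI}\colon f\mapsto (D_{I}f)_{I\in\cI}$. By the hypothesis of (2), $\Phi_{\cI}\colon L^{p}(\T;X)\to \ell^{q}(\cI;L^{p}(\T;X))$ is bounded uniformly in $\cI$. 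Since $X$ is $\UMD$, every $D_{I}$ is the difference of two shifted Riesz projections, so $\|D_{I}\|_{\calL(L^{r_{0}}(\T;X))}\leq 2R_{r_{0},X}$; consequently $\Phi_{\cI}\colon L^{r_{0}}(\T;X)\to \ell^{\infty}(\cI;L^{r_{0}}(\T;X))$ is bounded with norm at most $2R_{r_{0},X}$, also uniformly in $\cI$. Applying complex interpolation at the operator level (\cite[Theorem 2.2.6]{HNVW1}) together with the identities
\begin{align*}
[L^{r_{0}}(\T;X),L^{p}(\T;X)]_{\theta}&=L^{r}(\T;X),\\
[\ell^{\infty}(\cI;L^{r_{0}}(\T;X)),\,\ell^{q}(\cI;L^{p}(\T;X))]_{\theta}&=\ell^{s}(\cI;L^{r}(\T;X))
\end{align*}
then yields boundedness of $\Phi_{\cI}\colon L^{r}(\T;X)\to \ell^{s}(\cI;L^{r}(\T;X))$ with norm at most $(2R_{r_{0},X})^{1-\theta}L^{\theta}$, uniformly in $\cI$, which is precisely the lower $\ell^{s}(L^{r})$-decomposition property.

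The main obstacle is justifying the second interpolation identity, since the $\ell^{\infty}$ endpoint is well known to be delicate for complex interpolation. A clean way around this is to restrict the entire interpolation argument to finitely-supported families $(f_{I})_{I\in\cI}\subset \cP(\T;X)$: on such families both $\ell^{\infty}(\cI;L^{r_{0}}(\T;X))$ and $\ell^{q}(\cI;L^{p}(\T;X))$ collapse to finite $\ell$-direct sums of vector-valued Bochner spaces, for which the Calder\'on--Lions formula applies without density issues. Since $\cP(\T;X)$ is dense in $L^{r}(\T;X)$ for $r\in(1,\infty)$, the resulting estimate extends to all trigonometric polynomials, as required.
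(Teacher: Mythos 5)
Your proof is correct and follows essentially the same route as the paper: reduce (1) to (2) by two applications of Proposition~\ref{prop:dual}, then obtain (2) by complex interpolation between the hypothesized lower $\ell^q(L^p)$-decompositions and the trivial lower $\ell^\infty(L^{r_0})$-decompositions coming from the Riesz projection, with the auxiliary exponent $r_0$ chosen exactly as the $t$ in the paper. The only difference is cosmetic: you spell out the interpolation of the operator $\Phi_{\cI}$ inline (and are slightly more explicit about the $\ell^\infty$-endpoint via finite support), whereas the paper simply invokes Proposition~\ref{prop:interpolation}.
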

\begin{proof}
By the duality result in Proposition \ref{prop:dual}, it suffices to prove \ref{it:extrap2}. Moreover, $X$ has lower $\ell^\infty(L^t)$-estimates for all $t\in (1, \infty)$ by the boundedness of the Riesz projection. It therefore follows from Proposition \ref{prop:interpolation} that $X$ has lower $\ell^s(L^r)$-estimates if $\frac{1}{s} = \frac{1-\theta}{\infty} + \frac{\theta}{q} = \frac{\theta}{q}$ and $\frac{1}{r} = \frac{1-\theta}{t} + \frac{\theta}{p}$. First consider $r>p$. Since we assumed $\frac{\theta}{p}<\frac{1}{r}$, the latter identity holds for some $t \in (r,\infty)$. If $r<p$, then using  $\frac{\theta}{p'}<\frac{1}{r'}$, one can check that this identity holds for some $t\in (1,r)$.
\end{proof}

The decomposition properties also behave well in the following sense, where we note that
extrapolation to other exponents can be deduced from Proposition \ref{prop:growthtrick} and Corollary \ref{prop:extrapolation-p}.
 \begin{proposition}\label{prop:LparoundX}
Let $(S, \mathcal{A}, \mu)$ be a $\sigma$-finite measure space. Let $X$ be a Banach space and let $p,q\in (1,\infty)$.
\begin{enumerate}[(1)]
\item\label{it:Lpupper} If $X$ has upper $\ell^q(L^p)$-decompositions, then $L^p(S;X)$ has upper $\ell^{p\wedge q}(L^p)$-decompositions.
\item\label{Lplower} If $X$ has lower $\ell^q(L^p)$-decompositions, then $L^p(S;X)$ has lower $\ell^{p\vee q}(L^p)$-decompositions.
\end{enumerate}
\end{proposition}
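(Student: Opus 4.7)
My plan is to reduce everything to a pointwise application of the decomposition property on $X$ via Fubini, and then handle the two cases $p\leq q$ and $p\geq q$ with Minkowski's integral inequality in the appropriate direction. The crucial identification is the isometry $L^p(\T;L^p(S;X))=L^p(S;L^p(\T;X))$, combined with the observation that for $f\in \cP(\T;L^p(S;X))$ and $\mu$-a.e.\ $s\in S$, the function $f(\cdot,s)$ belongs to $\cP(\T;X)$ and satisfies $(D_I f)(\cdot,s)=D_I(f(\cdot,s))$, because $D_I$ acts only on the Fourier variable in $\T$. Thus the hypothesis on $X$ applies pointwise in $s$.

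For assertion \ref{it:Lpupper}, set $g_I(s):=\|D_I f(\cdot,s)\|_{L^p(\T;X)}$; the upper $\ell^q(L^p)$-hypothesis applied pointwise gives $\|f(\cdot,s)\|_{L^p(\T;X)}\leq U\bigl(\sum_I g_I(s)^q\bigr)^{1/q}$, so after raising to the $p$-th power and integrating in $s$,
\[
\|f\|_{L^p(\T;L^p(S;X))}^p \leq U^p \int_S \Bigl(\sum_I g_I(s)^q\Bigr)^{p/q} d\mu(s).
\]
If $p\leq q$, the embedding $\ell^p\hookrightarrow \ell^q$ yields $\bigl(\sum_I g_I^q\bigr)^{p/q}\leq \sum_I g_I^p$, and Fubini converts the right-hand side into $\sum_I \|D_I f\|_{L^p(\T;L^p(S;X))}^p$, giving the $\ell^{p}(L^p)$-upper bound. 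If $q<p$, I apply Minkowski's integral inequality with exponent $p/q\geq 1$ (viewing $g_I^q$ as a function on $S$) to obtain $\bigl(\int_S(\sum_I g_I^q)^{p/q}d\mu\bigr)^{q/p}\leq \sum_I\bigl(\int_S g_I^p d\mu\bigr)^{q/p}=\sum_I\|D_If\|_{L^p(\T;L^p(S;X))}^q$, which delivers the $\ell^q(L^p)$-upper bound. In both cases the constant $U$ is preserved.

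For assertion \ref{Lplower}, with the same $g_I$ the lower hypothesis gives $\bigl(\sum_I g_I(s)^q\bigr)^{1/q}\leq L\|f(\cdot,s)\|_{L^p(\T;X)}$ pointwise. When $p\geq q$, the embedding $\ell^q\hookrightarrow \ell^p$ lets me replace $\ell^q$ by $\ell^p$ on the left; raising to the $p$-th power, integrating, and invoking Fubini then yields the lower $\ell^p(L^p)$-decomposition. When $p<q$, I apply Minkowski with exponent $q/p\geq 1$ in the opposite direction, pulling $L^p(S)$ inside $\ell^q$, to get $\bigl(\sum_I\bigl(\int_S g_I^p d\mu\bigr)^{q/p}\bigr)^{p/q}\leq \int_S\bigl(\sum_I g_I^q\bigr)^{p/q} d\mu\leq L^p\|f\|_{L^p(\T;L^p(S;X))}^p$, which upon taking $p$-th roots is exactly the lower $\ell^q(L^p)$-decomposition.

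The only step requiring care is tracking which direction of Minkowski's integral inequality is available in each branch and checking that it matches the case distinction $p\wedge q$ for upper and $p\vee q$ for lower; everything else is elementary norm juggling. No UMD-specific machinery beyond what is packaged into the hypothesis on $X$ is needed at this stage — in particular, no Fourier multiplier theory is invoked, since the operators $D_I$ enter only through the pointwise norms $g_I(s)$.
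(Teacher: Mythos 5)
Your proof is correct and follows essentially the same route as the paper's: Fubini to identify $L^p(\T;L^p(S;X))$ with $L^p(S;L^p(\T;X))$, a pointwise-in-$s$ application of the decomposition hypothesis, a contractive $\ell^p$-embedding, and Minkowski's integral inequality. The paper packages this without a case distinction by first passing to $\ell^{p\wedge q}\hookrightarrow\ell^q$ (resp.\ $\ell^q\hookrightarrow\ell^{p\vee q}$) and then applying Minkowski with exponent $p/(p\wedge q)\geq 1$ (resp.\ $(p\vee q)/p\geq 1$); your explicit split into $p\leq q$ and $q<p$ unrolls exactly this, so the two arguments are the same in substance.
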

\begin{proof}
\ref{it:Lpupper}: By Fubini's theorem, the assumption, the contractive embedding $\ell^{p\wedge q}\hookrightarrow \ell^q$, and Minkowski's inequality, we obtain
\begin{align*}
 \|f\|_{L^p(\mathbb{T};L^p(S;X))} & = \|f\|_{L^p(S;L^p(\mathbb{T};X))}
 \\ & \leq  U \|(D_{I} f)_{I\in\cI}\|_{L^p(S;\ell^q(\cI;L^p(\mathbb{T};X)))}
 \\ & \leq U \|(D_{I} f)_{I\in\cI}\|_{L^p(S;\ell^{p\wedge q}(\cI;L^p(\mathbb{T};X)))}
 \\ & \leq U\|(D_{I} f)_{I\in\cI}\|_{\ell^{p\wedge q}(\cI;L^p(S;L^p(\mathbb{T};X)))}
 \\ & =U\|(D_{I} f)_{I\in\cI}\|_{\ell^{p\wedge q}(\cI;L^p(\mathbb{T};L^p(S;X)))}.
    \end{align*}

\ref{Lplower}: This can be proved in the same way.
\end{proof}

The following result is much deeper and follows from \cite{bourgain} and  \cite{fabian}. It will play a role in some of the results below.
\begin{theorem}\label{thm:nontrivialBourgain}
Let $X$ be a Banach space and $p\in (1, \infty)$.
\begin{enumerate}[(1)]
\item\label{it:bourgain1}
 $X$ is super-reflexive if and only if there exists a $q\in (1,\infty)$ such that $X$ has upper $\ell^q(L^p)$-decompositions.
\item\label{it:bourgain2} $X$ is a $\UMD$ space if and only if there exists a $q\in (1,\infty)$ such that $X$ has lower $\ell^q(L^p)$-decompositions.
\end{enumerate}
\end{theorem}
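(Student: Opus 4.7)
My plan is to prove the four one-way implications separately, splitting each into the easy direction (decomposition property $\Rightarrow$ geometric property) and the hard direction (geometric property $\Rightarrow$ decomposition property).

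\textbf{Easy directions.} For ``lower $\ell^q(L^p)$-decompositions $\Rightarrow$ $X$ is $\UMD$'' in \ref{it:bourgain2}, I would reuse the argument in the paragraph preceding Proposition \ref{prop:dual}: testing the lower estimate on the two-interval partition $\{\Z_+,\Z_-\setminus\{0\}\}$ gives boundedness of the Riesz projection on $L^p(\T;X)$, which characterises $\UMD$ and forces $p\in(1,\infty)$. For the converse in \ref{it:bourgain1}, ``upper $\ell^q(L^p)$-decompositions $\Rightarrow$ $X$ super-reflexive'', I would appeal to the forward-referenced Proposition \ref{prop:Utypecotype}, where upper $\ell^q(L^p)$-decompositions with $q<\infty$ are shown to impose a martingale/Haar-type geometric condition strong enough for Pisier's renorming theorem to produce an equivalent uniformly convex norm, which is super-reflexivity.

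\textbf{$\UMD$ $\Rightarrow$ lower decompositions.} For the hard direction of \ref{it:bourgain2}, my plan is to invoke Bourgain's vector-valued Littlewood--Paley/Rubio de Francia inequality for $\UMD$ spaces from \cite{bourgain}, which produces a square-function estimate of the form
\[ \Bigl\|\bigl(\sum_{I\in\cI}\|D_I f(\cdot)\|_X^{q}\bigr)^{1/q}\Bigr\|_{L^p(\T)}\le C\|f\|_{L^p(\T;X)} \]
valid for arbitrary interval partitions $\cI$ and for some $q\in (1,\infty)$ depending on the finite cotype of $X$ (available since $\UMD$ implies super-reflexive). Taking $q\le p$ and applying Minkowski's integral inequality then converts this into the $\ell^q(L^p)$ form of the lower decomposition, and Proposition \ref{prop:extrapolation-p} covers the remaining range of $p$.

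\textbf{Super-reflexive $\Rightarrow$ upper decompositions, and the main obstacle.} For the hard direction of \ref{it:bourgain1}, my plan is to combine Pisier's characterisation of super-reflexivity (equivalent uniformly convex renorming, or equivalently finite martingale type $r>1$) with a transference argument in the spirit of \cite{fabian}, converting the martingale-type bound into an upper $\ell^r(L^p)$-decomposition for the dyadic Littlewood--Paley partition of $\Z$. Arbitrary interval partitions would then be handled by a randomisation argument adapted to the super-reflexive (rather than $\UMD$) setting. The main obstacle is precisely this last step: Lemma \ref{m16} and other vector-valued Fourier-multiplier bounds require $\UMD$, not merely super-reflexivity, so the randomisation cannot be carried out as in the previous paragraph and has to be rerouted through Haar/dyadic martingale differences and the deeper renorming machinery of \cite{bourgain, fabian}.
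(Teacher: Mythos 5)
Your proposal and the paper part ways most significantly on the architecture of the proof of \ref{it:bourgain2}. You try to prove ``$\UMD\Rightarrow$ lower decompositions'' directly by invoking a vector-valued Rubio de Francia square-function estimate for $\UMD$ spaces and then converting it with Minkowski. The paper instead derives this direction from \ref{it:bourgain1} by duality: if $X$ is $\UMD$ then $X^*$ is $\UMD$ and hence super-reflexive, so by \ref{it:bourgain1} $X^*$ has upper $\ell^{q'}(L^{p'})$-decompositions for some $q'\in(1,\infty)$, and then Proposition \ref{prop:dual} transfers this to lower $\ell^{q}(L^p)$-decompositions of $X$. This completely avoids the need for a direct square-function estimate in the general $\UMD$ setting (where the form and availability of such an estimate is delicate), and it is the reason the paper can be so short here. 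Your ``easy'' direction of \ref{it:bourgain2} (testing on $\{\Z_+,\Z_-\setminus\{0\}\}$ to get Riesz projection boundedness) coincides with the paper's observation below Definition \ref{def:decom}. For \ref{it:bourgain1}, the paper does not prove anything: it cites both implications as immediate from \cite[Theorem 10]{bourgain} and \cite[Theorem 9.25]{fabian}.

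There is also a concrete gap in your reconstruction of the ``easy'' direction of \ref{it:bourgain1}. Proposition \ref{prop:Utypecotype} only yields Rademacher type $q$, and Fourier type and cotype in some range, from upper $\ell^q(L^p)$-decompositions; it does not produce martingale type or a Haar-type inequality, and nontrivial (Rademacher or even Fourier) type does not imply super-reflexivity (James's quasi-reflexive space has type $2$ but is not reflexive). So Pisier's uniformly convex renorming theorem, which needs martingale type $>1$, cannot be invoked from Proposition \ref{prop:Utypecotype} in the way you describe. This direction genuinely requires the deeper input from \cite{bourgain,fabian}, and the paper is careful to treat it as a citation rather than attempt to rederive it from the type/cotype consequences established in Section \ref{subsec:necctype}. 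Your last paragraph correctly identifies the multiplier-theorem obstacle for ``super-reflexive $\Rightarrow$ upper decompositions,'' but does not resolve it; again, the paper simply defers to the cited references rather than attempting a transference/randomisation argument outside the $\UMD$ class.
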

\begin{proof}
\ref{it:bourgain1} is immediate from \cite[Theorem 10]{bourgain} and  \cite[Theorem 9.25]{fabian}.
For \ref{it:bourgain2}, note that if $X$ is a $\UMD$ space, then $X^*$ is a $\UMD$ space as well, and thus super-reflexive. By \ref{it:bourgain1}  for each $p'\in (1,\infty)$, there exists a $q'\in (1,\infty)$ such that $X^*$ has upper $\ell^{q'}(L^{p'})$-decompositions. Applying Proposition \ref{prop:dual}, $X$  has lower $\ell^{q}(L^{p})$-decompositions. The converse implication has already been observed below  Definition \ref{def:decom}.
\end{proof}

\subsection{Necessity of type and cotype properties}\label{subsec:necctype}
We have already seen that super-reflexivity and UMD are necessary for upper and lower decompositions, respectively.
Our next aim is to show that the decomposition properties also imply (Fourier) type and cotype. We briefly recall the definitions. For  details the reader is referred to \cite{HNVW1, HNVW2, HNVW3}.

Let $p\in [1, \infty]$. The space $X$ has {\em Fourier type $p$} if there exists a constant $\varphi_{p,X}>0$ such that for all finitely nonzero $(x_n)_{n\in \Z}$  in $X$, we have
\[\Big\|\sum_{n\in \Z} e_n x_n\Big\|_{L^{p'}(\T;X)}\leq \varphi_{p,X}\,\|(x_n)_{n\geq 1}\|_{\ell^p(\Z;X)}.\]
Every Banach space has Fourier type $1$. Moreover, from the scalar case it follows that necessarily $p\in [1, 2]$. Finally, note that $X$ has Fourier type $p$ if and only if $X^*$ has Fourier type $p$ (see \cite[Propositions 2.4.16 and 2.4.20]{HNVW1}).

Let $(\varepsilon_n)_{n\geq 1}$ be a complex Rademacher sequence on a probability space $(\O,\mathcal{A},\P)$, i.e., \ a sequence of independent random variables $\varepsilon_n$ which are uniformly distributed on the unit circle in $\C$.
Let $p\in [1, \infty)$. The space $X$ is said to have \emph{type $p$} if there exists a constant $\tau_{p,X} > 0$ such that for all $x_1,\cdots, x_n \in  X$, we have
  $$\Big\|\sum_{k=1}^n\varepsilon_k x_k\Big\|_{L^2(\Omega;X)}\leq \tau_{p,X} \has{\sum_{k=1}^n \|x_k\|_{X}^p}^{\frac1p}.$$

Let $q\in [1, \infty]$. The space $X$ is said to have \emph{cotype $q$} if there exists a constant $c_{q,X}> 0$ such that for all $x_1,\cdots, x_n \in  X$, we have
  $$ \has{\sum_{k=1}^n \|x_k\|_{X}^q}^{\frac1q}\leq c_{q,X}\Big\|\sum_{k=1}^n\varepsilon_n x_n\Big\|_{L^2(\Omega;X)}.$$

In the above, the complex Rademacher sequence can be replaced by a real Rademacher sequence (see \cite[Proposition 6.1.19]{HNVW2}).
Every space has type $1$ and cotype $\infty$. Moreover, considering the scalar case one sees that necessarily $p\in [1, 2]$ and $q\in [2, \infty]$ in the above definitions. If $X$ has type $p$, then $X^*$ has cotype $p'$. The converse holds if $X$ has some nontrivial type, which for instance is the case if $X$ is UMD or super-reflexive. Finally, note that Fourier type $p$ implies type $p$ and cotype $p'$ (see \cite[Proposition 7.3.6]{HNVW2}).

To deduce type and cotype properties, we will present the details in the case of upper decompositions. The lower case will be derived by duality.

\begin{proposition}[Upper decompositions implies type and cotype]\label{prop:Utypecotype}
Let $X$ be a Banach space and $p,q\in [1, \infty]$. If $X$ has upper $\ell^q(L^p)$-decompositions, then $q\in [1, p'\wedge 2]$ and
\begin{enumerate}[(1)]
\item\label{it:type1} $X$ has type $q$;
\item\label{it:type2}  $X$ has Fourier type $r'$ and cotype $r$ for any $r\in (\frac{2q'}{p\wedge 2},\infty)$.
\end{enumerate}
\end{proposition}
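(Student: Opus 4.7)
The proof reduces to the \emph{master inequality} obtained by applying the upper $\ell^q(L^p)$-decomposition to the singleton partition $\cI=\{\{n\}:n\in\Z\}$: since $D_{\{n\}}f=\widehat f(n)e_n$ and $\|e_nx\|_{L^p(\T;X)}=\|x\|_X$, this yields, for every finitely nonzero $(x_n)\subset X$,
\[\Big\|\sum_{n\in\Z}x_ne_n\Big\|_{L^p(\T;X)}\leq U\Big(\sum_{n\in\Z}\|x_n\|_X^q\Big)^{1/q}.\]
Restricting this bound to the one-dimensional subspace $\C x\subset X$ (with $x\neq 0$) reduces it to the scalar inequality $\|f\|_{L^p(\T)}\leq U\|\widehat f\|_{\ell^q(\Z)}$. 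Testing with the Hadamard lacunary sum $f=\sum_{k=1}^Ne_{3^k}$ forces $q\leq 2$ via Khintchine-Zygmund ($\|f\|_{L^p(\T)}\asymp\sqrt N$ against $\|\widehat f\|_{\ell^q}=N^{1/q}$); testing with the Dirichlet kernel $D_N=\sum_{n=0}^Ne_n$ forces $q\leq p'$ via $\|D_N\|_{L^p(\T)}\asymp N^{1-1/p}$. Hence $q\in[1,p'\wedge 2]$.

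For (1), I apply the master inequality pointwise in $\omega$ to the randomized coefficients $(\varepsilon_n(\omega)x_n)_n$, where $(\varepsilon_n)$ is a complex Rademacher sequence on a probability space $(\Omega,\P)$; raising to the $p$-th power, integrating in $\omega$, and applying Fubini gives
\[\int_\Omega\int_\T\Big\|\sum_n\varepsilon_n(\omega)x_ne_n(t)\Big\|^p\,dt\,d\P(\omega)\leq U^p\|(x_n)\|_{\ell^q(\Z;X)}^p.\]
For each fixed $t\in\T$, the complex rotational invariance of $(\varepsilon_n)$ gives $(\varepsilon_ne_n(t))_n\stackrel{d}{=}(\varepsilon_n)_n$, so the inner expectation equals $\E\|\sum\varepsilon_nx_n\|^p$ independently of $t$. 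This collapses the double integral to $\|\sum_n\varepsilon_nx_n\|_{L^p(\Omega;X)}\leq U\,\|(x_n)\|_{\ell^q(\Z;X)}$, and Kahane-Khintchine upgrades the $L^p(\Omega;X)$-norm to an $L^2(\Omega;X)$-norm, producing the type $q$ inequality with constant $\lesssim U$.

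For (2), since Fourier type $r'$ implies both type $r'$ and cotype $r$, it suffices to prove the Fourier type bound; this is the principal obstacle. The plan is to refine the randomization of (1): for each $t\in\T$, rotational invariance and the type $q$ inequality from (1) (combined with Kahane-Khintchine) give, for any $s<\infty$,
\[\Big\|\sum_n\varepsilon_nx_ne_n(t)\Big\|_{L^s(\Omega;X)}=\Big\|\sum_n\varepsilon_nx_n\Big\|_{L^s(\Omega;X)}\lesssim\tau_{q,X}\|(x_n)\|_{\ell^q(\Z;X)}.\]
Integrating in $t$, combining with the trivial endpoint $T\colon\ell^1(\Z;X)\to L^\infty(\T;X)$ via complex interpolation, and then using a Fubini/Minkowski exchange together with Kahane-Khintchine in the $\omega$ variable to pass from the randomized Fourier series $\sum\varepsilon_nx_ne_n$ back to $\sum x_ne_n$, extracts the Fourier type estimate $\|\sum x_ne_n\|_{L^r(\T;X)}\leq\varphi\|(x_n)\|_{\ell^{r'}(\Z;X)}$ for every $r>2q'/(p\wedge 2)$. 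The factor $(p\wedge 2)$ in the threshold reflects the embedding $L^p(\T)\hookrightarrow L^{p\wedge 2}(\T)$, which lets one replace $p$ by $p\wedge 2\leq 2$ in the master inequality at no cost before interpolating. A pure complex interpolation between $\ell^q\to L^p$ and $\ell^1\to L^\infty$ yields Fourier type only in the boundary case $q=p'$; the improvement to the range $r>2q'/(p\wedge 2)$ genuinely requires invoking the type $q$ property from (1) and carefully balancing the interplay of the Fourier variable $t$ and the Rademacher variable $\omega$ via Kahane-Khintchine.
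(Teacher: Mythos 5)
Your derivation of the master inequality, the constraints $q\leq p'$ and $q\leq 2$ via the Dirichlet kernel and lacunary sums (the paper instead cites the sharpness of Hausdorff--Young), and the type-$q$ argument in part (1) via complex Rademacher randomization plus Kahane--Khintchine are all correct and match the paper's route (the paper points to \cite[Proposition 7.3.6]{HNVW2} for the same randomization argument).

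There is, however, a genuine gap in part (2). Your plan is to randomize, interpolate, and then ``pass from $\sum\varepsilon_n x_n e_n$ back to $\sum x_n e_n$'' via Fubini/Minkowski and Kahane--Khintchine. But this de-randomization step has no basis, and for a structural reason: the randomized series carries no genuine $t$-information. By the rotational invariance you used in (1), for every fixed $t\in\T$ the sequence $(\varepsilon_n e_n(t))_n$ is equidistributed with $(\varepsilon_n)_n$, so
\[
\int_\T\int_\Omega\nrms{\sum_n\varepsilon_n(\omega)x_ne_n(t)}^v\dd\P\dd t
=\int_\Omega\nrms{\sum_n\varepsilon_n(\omega)x_n}^v\dd\P ,
\]
i.e., any joint $L^v(\T\times\Omega;X)$ norm of $\sum\varepsilon_n x_n e_n$ collapses to a pure Rademacher moment. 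So after randomizing you never hold a quantity in which the Fourier variable survives, and every inequality you can extract is a type/cotype inequality, not a Fourier type inequality. There is no inequality of the form $\nrmb{\sum x_n e_n}_{L^r(\T;X)}\lesssim\nrmb{\sum\varepsilon_n x_n}_{L^r(\Omega;X)}$ in a general Banach space that would let you recover the deterministic Fourier extension estimate from the randomized one, and Kahane--Khintchine (which only compares Rademacher moments to each other) does not supply one. Also note that ``pure interpolation only yields the case $q=p'$'' is not the actual obstruction the paper faces: the paper \emph{does} interpolate $\ell^q\to L^{p}$ (after reducing to $p\leq2$ by H\"older) against $\ell^1\to L^\infty$ to reach $\ell^s\to L^2$ with $\tfrac1s=1-\tfrac{p}{2q'}$, then applies H\"older over $n$ terms to pass from $\ell^s$ to $n^{1/s-1/2}\ell^2$, and invokes \cite[Lemma 13.1.32]{HNVW3}, which converts the polynomial growth estimate $\nrmb{\sum_{k=1}^n e_k x_k}_{L^2(\T;X)}\lesssim n^{\alpha}(\sum_k\|x_k\|^2)^{1/2}$ into Fourier type $r'$ for $\tfrac1{r'}>\tfrac12+\alpha$. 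No randomization is needed. To repair your argument you would either need to supply a proof of that extrapolation lemma or otherwise replace the invalid de-randomization step; as written, (2) does not go through.
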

\begin{proof}
By the assumption applied to the trigonometric polynomial $f = \sum_{k=1}^n e_k x_k$, and $I_k = \{k\}$ for $k\in \Z$, we obtain
\begin{align}\label{eq:Fouriertypeest}
\Big\|\sum_{n\in \Z} e_n x_n\Big\|_{L^{p}(\T;X)}\leq U\, \|(x_n)_{n\geq 1}\|_{\ell^q(\Z;X)}.
\end{align}
This implies $q\in [1,p']$. Indeed, if $q>p'$, this would lead to an improvement of the classical Hausdorff--Young inequalities, which is known to be false for $\C$ and thus for one-dimensional subspaces of $X$. This can for instance be deduced from \cite[below (4.6) with $a\in (0,1)$]{DomVer}.

\ref{it:type1}: The fact that $X$ has type $q$, and thus in particular $q\leq 2$, follows from  \eqref{eq:Fouriertypeest} and the same argument as in \cite[Proposition 7.3.6]{HNVW2}.

\ref{it:type2}: By H\"older's inequality we may assume $p \leq 2$ in \eqref{eq:Fouriertypeest}.  Interpolating this estimate with the trivial bound
\begin{align*}
\Big\|\sum_{k=1}^n e_k x_k\Big\|_{L^\infty(\T;X)}\leq \sum_{k=1}^n \|x_k\|,
\end{align*}
and setting $\theta = p/2$ and $\frac1s = \frac{1-\theta}{1} + \frac{\theta}{q} = 1- \frac{p}{2 q'}$, we obtain
\begin{align*}
\Big\|\sum_{k=1}^n e_k x_k\Big\|_{L^2(\T;X)}\leq U^{p/2} \Big(\sum_{k=1}^n \|x_k\|^s\Big)^{1/s}.
\end{align*}
Note that $s\in [1, 2]$ as a consequence of $q\leq p'$. Therefore, H\"older's inequality implies that
\begin{align*}
\Big\|\sum_{k=1}^n e_k x_k\Big\|_{L^2(\T;X)} \leq U^{p/2} n^{\frac{1}{s}-\frac12} \Big(\sum_{k=1}^n \|x_k\|^2\Big)^{1/2}.
\end{align*}
By \cite[Lemma 13.1.32]{HNVW3} the latter estimate implies Fourier type $r'$ if $\frac{1}{r'}>\frac{1}{s} = 1- \frac{p}{2 q'}$, which is the required result. Since Fourier type $r'$ implies cotype $r$ by \cite[Proposition 7.3.6]{HNVW2}, this completes the proof.
\end{proof}

\begin{proposition}[Lower decompositions implies type and cotype]\label{prop:Ltypecotype}
Let $X$ be a Banach space, $p\in (1, \infty)$ and $q\in [1, \infty]$. If $X$ has lower $\ell^q(L^p)$-decompositions, then $q\in [p'\vee 2,\infty]$ and
\begin{enumerate}[(1)]
\item\label{it:cotype1}  $X$ has cotype $q$;
\item\label{it:cotype2}  $X$ has Fourier type $r'$ and type $r'$ for any $r\in (\frac{2q}{p'\wedge 2},\infty)$.
\end{enumerate}
\end{proposition}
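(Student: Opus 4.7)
The strategy is to deduce this from the upper-decomposition version (Proposition \ref{prop:Utypecotype}) by first dualizing the hypothesis. Since $X$ has lower $\ell^q(L^p)$-decompositions, $X$ is a $\UMD$ space (as noted below Definition \ref{def:decom}) and is therefore reflexive, so $X = X^{**}$. Applying Proposition \ref{prop:dual} to the pair $(X^*, X^{**})$ with parameters $p'$ and $q'$ yields the equivalence
\[
X^*\ \text{is}\ \UMD\ \text{with upper}\ \ell^{q'}(L^{p'})\text{-decompositions} \ \Longleftrightarrow\ X^{**} = X\ \text{has lower}\ \ell^{q}(L^{p})\text{-decompositions},
\]
so the hypothesis gives upper $\ell^{q'}(L^{p'})$-decompositions for $X^*$.

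Next, I would apply Proposition \ref{prop:Utypecotype} to $X^*$ with exponents $p'$ and $q'$. This produces the dimensional restriction $q' \in [1, p \wedge 2]$, which dualizes to $q \in [p'\vee 2, \infty]$ as required, and it gives:
\begin{enumerate}[(a)]
\item $X^*$ has type $q'$;
\item $X^*$ has Fourier type $r'$ (and cotype $r$) for every $r \in \bigl(\tfrac{2(q')'}{p' \wedge 2}, \infty\bigr) = \bigl(\tfrac{2q}{p'\wedge 2}, \infty\bigr)$.
\end{enumerate}

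The final step is to push these dual properties back to $X$. For (1), type $q'$ of $X^*$ implies cotype $q$ of $X^{**} = X$ by the standard (unconditional) type-cotype duality (e.g.\ \cite[Proposition~7.1.13]{HNVW2}); reflexivity of $X$ is what makes this step free of any prior nontrivial-type assumption on $X$. For (2), Fourier type is preserved under duality for reflexive spaces (\cite[Propositions~2.4.16 and 2.4.20]{HNVW1}, cited in the paper), so $X^*$ having Fourier type $r'$ transfers to $X$; and then Fourier type $r'$ of $X$ implies type $r'$ of $X$ by \cite[Proposition~7.3.6]{HNVW2}, also cited in the paper.

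I do not anticipate a real obstacle: the argument is essentially bookkeeping in the spirit of the proof of Proposition \ref{prop:Utypecotype} combined with duality. The only items deserving care are the bookkeeping of primes and complements — verifying that the restriction $q' \in [1, p \wedge 2]$ is exactly $q \in [p' \vee 2, \infty]$ and that $(q')' = q$ in the cutoff $\tfrac{2q}{p'\wedge 2}$ — and the use of reflexivity to identify $X = X^{**}$ so that the dualization and the subsequent type-cotype transfer both go through cleanly. No step requires any new Fourier multiplier input beyond what is already in the hands via Propositions \ref{prop:dual} and \ref{prop:Utypecotype}.
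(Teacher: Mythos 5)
Your proposal is correct and follows essentially the same route as the paper's proof: dualize the hypothesis via Proposition~\ref{prop:dual} (using that $X$ is $\UMD$, hence reflexive, to identify $X$ with $X^{**}$), apply Proposition~\ref{prop:Utypecotype} to $X^*$ with exponents $(p',q')$, and then transfer type, cotype, and Fourier type back to $X$ by duality. The only difference is that you spell out the reflexivity step explicitly, which the paper leaves implicit; otherwise the bookkeeping of primes and the citations used are identical in substance.
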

\begin{proof}
Note that the assumption implies that $X$ is UMD. By Proposition \ref{prop:dual}, we know that $X^*$ has upper $\ell^{q'}(L^{p'})$-decompositions. Thus Proposition \ref{prop:Utypecotype} gives that $q'\in [1, p\wedge 2]$, and $X^*$ has type $q'$. Therefore, $X$ has cotype $q$ and this proves \ref{it:cotype1}. Similarly, $X^*$ has Fourier type $r'$ for any $r\in (\frac{2q}{p'\wedge 2},\infty)$. This implies that $X$ has Fourier type $r'$, and thus also type $r'$ by \cite[Proposition 7.3.6]{HNVW2}.
\end{proof}

\subsection{Examples}

We have already seen that every UMD space admits nontrivial upper and lower $\ell^q(L^p)$-decompositions. In this section, we give some concrete spaces and indicate what the admissible $p$ and $q$ are on these spaces.

\begin{example}\label{ex:HilbertUL}
Let $X$ be a Hilbert space. Then
\begin{itemize}
\item $X$ has upper $\ell^{p}(L^p)$-decompositions for  $p\in (1, 2]$.
\item $X$ has lower $\ell^{p}(L^p)$-decompositions for  $p\in [2, \infty)$.
\end{itemize}
Indeed, the first claim follows from the second by the duality statement in Proposition \ref{prop:dual}. Moreover, for the second statement it suffices to consider $X=\C$ by \cite[Theorem 2.1.9]{HNVW1}. By Rubio de Francia's Littlewood--Paley inequality for arbitrary intervals \cite{Rubio}, which for $\T$ can be found in \cite{KP05}, there is a $C>0$ such that  for $p \in [2,\infty)$, each interval partition $\cI$ and all $f\in \cP(\T;X)$, we have
\begin{align}\label{eq:Rubiofun}
\has{\sum_{I\in\cI} \norm{D_{I}f}_{L^{p}(\mathbb{T})}^p}^{\frac1p} \leq \nrms{\has{\sum_{I\in\cI}|D_{I}f|^2}^{\frac12}}_{L^{p}(\mathbb{T})}\leq  C \, \|f\|_{L^{p}(\mathbb{T})}.
\end{align}

For the other ranges of $p$, we can combine \eqref{eq:Rubiofun} for $p=2$ with Proposition \ref{prop:extrapolation-p} to obtain that
\begin{itemize}
\item $X$ has upper $\ell^{q}(L^p)$-decompositions for  $p\in [2, \infty)$ and $q \in [1,p')$.
\item $X$ has lower $\ell^{q}(L^p)$-decompositions for  $p\in (1, 2]$ and $q \in (p',\infty]$.
\end{itemize}
The endpoint $q=p'$ is missing in the above result due to the use of Proposition \ref{prop:extrapolation-p}. We leave it as an open problem whether these endpoints hold, see Problem \ref{prob:scalarfield}.
\end{example}

\begin{example}\label{ex:LpUL}
Let $(S, \mathcal{A},\mu)$ be a $\sigma$-finite measure space and $p\in (1, \infty)$. From Example \ref{ex:HilbertUL}  and Proposition \ref{prop:LparoundX} we immediately obtain
\begin{itemize}
\item $L^p(S)$ has upper $\ell^{p}(L^p)$-decompositions for $p \in (1,2]$ and upper  $\ell^{q}(L^p)$-decompositions
 for  $p\in [2, \infty)$ and $q \in [1,p')$.
\item $L^p(S)$ has lower $\ell^{p}(L^p)$-decompositions for $p \in [2,\infty)$ and lower $\ell^{q}(L^p)$-decompositions for  $p\in (1, 2]$ and $q \in (p',\infty]$.
\end{itemize}
The claims about $\ell^p(L^p)$-decompositions are optimal, which follows from the optimality of Corollary \ref{cor:Lpcase} below. Whether the endpoints $q =  p'$ hold is even unclear in the case $S$ is a singleton thus $L^p(S) = \C$, see Problem \ref{prob:scalarfield}.

The spaces $X = L^1(S)$ and $X = L^\infty(S)$ do not have nontrivial upper and lower estimates, since they are not reflexive in general.
\end{example}

An efficient method to create many examples can be obtained by interpolation. It is actually an open problem if all UMD spaces can be written as an interpolation space as below. For UMD lattices this is indeed the case (see \cite{Rubio2}). Moreover, noncommutative $L^p$-spaces can also be written in the form below.
\begin{example}\label{710}
Let $X:=[Y,H]_{\theta}$, where $Y$ is a $\UMD$ Banach space and $H$ is a Hilbert space such that $(Y,H)$ is an interpolation couple, and $\theta\in (0,1)$. Let $p\in ((1-\frac{\theta}{2})^{-1}, \frac{2}{\theta})$.
Then there exists a $\theta_0>\theta$ depending on $\theta,p$ and $Y$ such that
\begin{itemize}
\item $X$ has lower $\ell^{\frac{2}{\theta_0}}(L^p)$-decompositions.
\item $X$ has upper $\ell^{\frac{2}{2-\theta_0}}(L^{p})$-decompositions.
\end{itemize}
As a trivial consequence, the same holds with $\theta_0=\theta$. To derive the above, we only explain the lower case as the upper case can be proved similarly.
By the assumption on $p$ we can find $p_0\in (1, \infty)$ such that
\[\frac{1}{p} = \frac{1-\theta}{p_0}+\frac{\theta}{2}.\]
By Theorem \ref{thm:nontrivialBourgain} there exists an $s\in (1, \infty)$ such that $Y$ has lower $\ell^s(L^{p_0})$-decompositions.
Note that $s\geq p'_0\vee 2$ by Proposition \ref{prop:Ltypecotype}. Since $H$ has lower $\ell^2(L^2)$-decompositions, Proposition \ref{prop:interpolation} gives that $X$ has lower $\ell^r(L^p)$-decompositions where $r\in [2, s]$ satisfies $\frac{1}{r} = \frac{1-\theta}{s} + \frac{\theta}{2}$. This gives the result in the lower case.
 \end{example}

\section{Main Results on \texorpdfstring{$\UMD$}{UMD} Banach Spaces}\label{UMD}

\subsection{Statement of the results}
In this section, we prove Theorem \ref{thm:mainintro} and discuss several consequences. We use a slightly more general formulation below, as this is required to obtain sharp estimates in Corollary \ref{cor:Lpcase}. The main extra ingredient is to allow growth in the upper and lower decompositions.

\begin{theorem}\label{thm:main}
Let $X$ be a Banach space, let $p, q_0,q_1\in (1, \infty)$ and let $\gamma_0\in [0,1/q_0')$, $\gamma_1\in [0,1/q_1)$. Suppose that the following conditions hold:
\begin{enumerate}[(1)]
\item\label{it:main1} There exists a constant $U>0$ such that for all finite families of disjoint intervals $\cI$ and all $f\in \cP(\T;X)$ with support in $\cup \{I\in \cI\}$,
  \begin{align*}
 \|f\|_{L^{p}(\mathbb{T};X)}  \leq  U \, (\#\cI)^{\gamma_0} \big(\sum_{I\in\cI}\|D_{I}f\|_{L^{p}(\mathbb{T};X)}^{q_0}\big)^{\frac{1}{q_0}};
  \end{align*}
\item\label{it:main2} There exists a constant $L>0$ such that for all finite families of disjoint intervals $\cI$ and all $f\in \cP(\T;X)$ with support in $\cup \{I\in \cI\}$,
\begin{align*}
\big(\sum_{I\in\cI}\|D_{I}f\|_{L^{p}(\mathbb{T};X)}^{q_1}\big)^{\frac{1}{q_1}}\leq  L\, (\#\cI)^{\gamma_1} \, \|f\|_{L^{p}(\mathbb{T};X)}.
\end{align*}
\end{enumerate}
Suppose that $T\in \calL(X)$ is strongly Kreiss bounded with constant $K_s$. Then there exist constants $C,\beta>0$ depending on $X$ and $K_s$ such that
    \begin{equation*}
        \|T^n\|\leq Cn^{\frac12(\frac{1}{q_0}-\frac{1}{q_1}+\gamma_0+\gamma_1)}(\log (n+2))^{\beta}, \qquad n\geq 1.
    \end{equation*}
\end{theorem}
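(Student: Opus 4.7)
The plan is to exploit the Hille--Yosida type reformulation \eqref{k1} of strong Kreiss boundedness, $\|e^{\xi T}\|\leq K_s e^{|\xi|}$, and then transfer the problem into Fourier analysis on the torus. Concretely, for $n\geq 1$, $x\in X$ and a parameter $\rho>0$ (optimized in terms of $n$), consider the analytic operator-valued function $F(\xi)=e^{\xi T}$ restricted to $|\xi|=\rho$, and set $f(t):=e^{\rho e^{2\pi it}T}x$. Then $f$ is a smooth $X$-valued function on $\T$ with
\[
\widehat{f}(k)=\frac{\rho^k}{k!}\,T^k x\quad (k\geq 0),\qquad \widehat{f}(k)=0\quad (k<0),
\]
and $\|f\|_{L^p(\T;X)}\leq K_s e^\rho\|x\|$. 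Truncating to $f_M:=\sum_{k\le M}\widehat{f}(k)e_k$ with $M\asymp n(\log n)$ makes $f_M\in\cP(\T;X)$ and retains both estimates up to universal constants. The identity $\|T^n x\|=(n!/\rho^n)\|\widehat{f}(n)\|$ together with Stirling already yields the baseline $\|T^n\|\lesssim\sqrt n$ at the optimal choice $\rho=n$; any improvement must come from a finer analysis of the Fourier coefficients via the decomposition hypotheses.

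The idea is to perform the analysis on a \emph{two-scale} interval partition of $[0,M]$ adapted to the Gaussian concentration of $\rho^k/k!$ around $k=\rho=n$ with width $\sqrt n$. First, partition $[0,M]$ into blocks $I_j$ of length $m$ and apply the lower $\ell^{q_1}(L^p)$-decomposition \ref{it:main2}:
\[
\Bigl(\sum_j\|D_{I_j}f_M\|_{L^p(\T;X)}^{q_1}\Bigr)^{1/q_1}\leq L(M/m)^{\gamma_1}\|f_M\|_{L^p(\T;X)}.
\]
Since $\rho^k/k!$ behaves like a Gaussian of width $\sqrt n$ about $n$, only $O(\sqrt n/m)$ blocks are ``active,'' and on each active block the scalar weight is comparable to $\rho^n/n!\sim e^n/\sqrt n$. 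Factoring this common weight out gives an estimate of $\|g_{j_0}\|_{L^p(\T;X)}$ where $g_j(t):=\sum_{k\in I_j}T^kx\,e_k(t)$, and since $T^nx=\widehat{g_{j_0}}(n)$ for $n\in I_{j_0}$, we get
\[
\|T^nx\|\leq\|g_{j_0}\|_{L^p(\T;X)}.
\]
Next, apply the upper $\ell^{q_0}(L^p)$-decomposition \ref{it:main1} to $g_{j_0}$ with the singleton partition of $I_{j_0}$:
\[
\|g_{j_0}\|_{L^p(\T;X)}\leq Um^{\gamma_0}\Bigl(\sum_{k\in I_{j_0}}\|T^k x\|^{q_0}\Bigr)^{1/q_0}.
\]
The point of pairing these two decompositions at the same window is that the lower one produces a bound involving $\|f_M\|\lesssim K_s e^\rho$, while the upper one re-expresses the resulting estimate in terms of individual powers $\|T^kx\|$ for $k\in I_{j_0}$, paving the way for a self-improving argument.

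The proof is then completed by a bootstrap. Starting from the a priori bound $\|T^k\|\leq K_s\sqrt{2\pi(k+1)}$ from \eqref{eq:strongKreisslinear}, assume inductively that $\|T^k\|\leq A_j k^{\beta_j}(\log(k+2))^{\delta_j}$ for $k\le n$. Substituting into the upper-decomposition step above, balancing with the lower-decomposition bound, and optimizing over $m$ (which should end up of order $\sqrt n$, possibly with logarithmic adjustments) yields a new exponent of the form
\[
\beta_{j+1}=\tfrac{1}{2}\beta_j+\tfrac{1}{2}\bigl(\tfrac{1}{q_0}-\tfrac{1}{q_1}+\gamma_0+\gamma_1\bigr),
\]
with an additive logarithmic loss $\delta_{j+1}=\delta_j+O(1)$ coming from the $(M/m)^{\gamma_1}$ and $m^{\gamma_0}$ factors. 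The fixed point of this recurrence is exactly $\alpha:=\tfrac{1}{2}(\tfrac{1}{q_0}-\tfrac{1}{q_1}+\gamma_0+\gamma_1)$, and $\beta_j\to\alpha$ geometrically; after $J=O(\log\log n)$ iterations $\beta_J-\alpha\ll 1/\log n$, so $n^{\beta_J}\leq e\cdot n^\alpha$, and the total logarithmic loss is $O(\log\log n)=O(1)$ powers of $\log n$, giving the advertised bound $\|T^n\|\leq Cn^\alpha(\log(n+2))^\beta$.

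\textbf{Main obstacle.} The hard part is Step (the bootstrap): setting up the iteration so that (i) both decomposition constants and the \emph{finite}, trigonometric-polynomial nature of $f_M$ are handled cleanly (the hypotheses on upper/lower decompositions only apply to trig polynomials with support in $\cup\cI$, so the truncation $M$, the choice of $\rho$, and the scale $m$ must be coordinated); (ii) the Gaussian concentration of $\rho^k/k!$ is quantified rigorously, in particular the ``active block'' count $\sqrt n/m$ and the comparability of weights on a block are verified in $L^p(\T;X)$ rather than just pointwise; and (iii) the constants $A_j$, $\delta_j$ in the bootstrap are controlled so that the accumulation of losses over $O(\log\log n)$ iterations remains polylogarithmic. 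Verifying that the recurrence indeed contracts to the claimed fixed point $\alpha$, rather than some larger exponent, is where the precise interplay between $q_0$, $q_1$, $\gamma_0$, $\gamma_1$ enters.
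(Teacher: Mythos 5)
Your raw ingredients (the Gaussian weighting coming from $\ee^{\rho e_1 T}$, Stirling, removing the $\rho^k/k!$ weights by a bounded-variation multiplier, and a bootstrap) are all in the right spirit, but the assembly has three gaps that would prevent the proof from closing.

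\textbf{(a) The lower decomposition is not actually used.} You apply \ref{it:main2} to $f_M$ and then extract the single block $\|D_{I_{j_0}}f_M\|$. But a single entry of an $\ell^{q_1}$-sum is trivially dominated by the whole sum, so this step only uses the boundedness of the Riesz projection; the exponent $q_1$ never enters. The ``only $O(\sqrt n/m)$ active blocks'' heuristic would gain a factor $(\sqrt n/m)^{-1/q_1}$ only if one also had a \emph{lower} bound on the other active blocks comparable to $\|D_{I_{j_0}}f_M\|$, which is false in general (and nothing in the hypotheses provides it). In the paper the gain from $q_1$ is obtained via duality: condition \ref{it:main2} for $X$ is equivalent to the analogue of \ref{it:main1} for $X^*$ with exponent $q_1'$, so the same self-improvement machine is run for $T^*$ on $L^{p'}(\T;X^*)$, and at the very end a pairing identity
\[
(1+\lfloor\sqrt n\rfloor)\,\lb x^*,T^{n+1}x\rb = \sum_{1\le k\le 1+\sqrt n}\lb T^{*k}x^*,T^{n+1-k}x\rb
\]
combines the two partial-sum estimates and produces the $-\tfrac{1}{2q_1}$. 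This duality is absent from your sketch, and without it the exponent $\tfrac12(\tfrac1{q_0}-\tfrac1{q_1}+\gamma_0+\gamma_1)$ cannot appear.

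\textbf{(b) The bootstrap as stated is circular.} You apply \ref{it:main1} to $g_{j_0}(t)=\sum_{k\in I_{j_0}}T^kx\,e_k(t)$ with the singleton partition, giving
\[
\|T^nx\|\le\|g_{j_0}\|_{L^p(\T;X)}\le Um^{\gamma_0}\Bigl(\sum_{k\in I_{j_0}}\|T^kx\|^{q_0}\Bigr)^{1/q_0}\le Um^{\gamma_0+1/q_0}\sup_{k\in I_{j_0}}\|T^kx\|.
\]
Since $I_{j_0}$ is a window of length $m\sim\sqrt n$ ending near $n$, this bounds $\|T^n\|$ by a constant times $\sup_{k\approx n}\|T^k\|$ with a growing prefactor; it cannot contract. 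The quantity that self-improves in the paper is \emph{not} $\|T^n\|$ but the partial-sum norm $\|\sum_{m=1}^n e_mT^mx\|_{L^p(\T;X)}$ (Proposition~\ref{p5}), which reduces to the same norm on the shorter range $[1,\sqrt n]$ via Lemma~\ref{lem:key}. That reduction is what makes the recurrence $d\mapsto\tfrac12(d+\tfrac1q+\gamma)$ a genuine contraction.

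\textbf{(c) The missing engine: pre-multiplication by $S_n$.} In Lemma~\ref{lem:key} the key move is to look at $\ee^{e_1 nT}S_nx$ with $S_n=\sum_{1\le m\le\sqrt n}e_mT^m$, \emph{not} at $\ee^{e_1 nT}x$. This is what imports the inductive bound $h(\sqrt n)$ on the low-frequency partial sum into the estimate for the high-frequency block $[n-\sqrt n,n]$: the Fourier coefficients of $\ee^{e_1 nT}S_nx$ are $b_{n,m}T^mx$ with $b_{n,m}$ an aggregated Gaussian weight, the Riesz projection isolates the block, and the multiplier in Lemma~\ref{lem:factorial2} strips $b_{n,m}$. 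Your $f(t)=\ee^{\rho e_1T}x$ has Fourier coefficients $\tfrac{\rho^k}{k!}T^kx$ and only carries the a priori information $\|f\|\le K_s\ee^\rho\|x\|$, which does not improve under iteration. The recurrence $\beta_{j+1}=\tfrac12\beta_j+\tfrac12\alpha$ you write down is the right shape but, as set up, is asserted rather than derived; the $S_n$ trick and the duality step are precisely what is needed to make it come out.
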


For $\gamma_0= \gamma_1=0$, the conditions \ref{it:main1} and \ref{it:main2} in Theorem \ref{thm:main} are equivalent to the upper $\ell^{q_0}(L^p)$-decompositions and lower $\ell^{q_1}(L^p)$-decompositions of $X$, respectively. In many cases it is sufficient to consider $\gamma_0 = \gamma_1 = 0$. Moreover, note that  by Proposition \ref{prop:growthtrick}, the estimate in \ref{it:main1} implies that $X$ has upper $\ell^s(L^p)$-decompositions for all $s$ satisfying $\frac1s>\frac{1}{q_0}+\gamma_0$. In particular, this shows that $\frac{1}{q_0}+\gamma_0\geq \frac12$ (see Proposition \ref{prop:Utypecotype}). A similar implication holds from \ref{it:main2} to lower decompositions of $X$, and one has $\frac{1}{q_1'}+\gamma_1\geq \frac12$. Finally, note that it is not useful to consider $\gamma_0\geq 1/q_0'$ or $\gamma_1\geq 1/q_1$, because the obtained bound in the theorem would be worse than \eqref{eq:strongKreisslinear}.

Before we turn to the proof, we derive several immediate consequences. Using Theorem \ref{thm:nontrivialBourgain}, we obtain:
\begin{corollary}[General $\UMD$ case]\label{cor:UMDcase}
Let $X$ be a $\UMD$ Banach space. Suppose that $T\in \calL(X)$ is strongly Kreiss bounded with constant $K_s$. Then there exists an $\alpha\in [0,\frac12)$ depending on $X$, and a constant $C$ depending on $X$ and $K_s$ such that
\begin{equation*}
\|T^n\|\leq C n^{\alpha}, \qquad n\geq 1.
\end{equation*}
\end{corollary}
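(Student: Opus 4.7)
The plan is to deduce Corollary \ref{cor:UMDcase} as a direct consequence of Theorem \ref{thm:main}, using Theorem \ref{thm:nontrivialBourgain} to supply the required upper and lower Fourier decomposition properties. First I would fix any exponent, say $p = 2$, and invoke Theorem \ref{thm:nontrivialBourgain}\ref{it:bourgain2}: since $X$ is a $\UMD$ space, there exists $q_1 \in (1,\infty)$ such that $X$ has lower $\ell^{q_1}(L^2)$-decompositions. Similarly, since $\UMD$ implies super-reflexivity, Theorem \ref{thm:nontrivialBourgain}\ref{it:bourgain1} yields some $q_0 \in (1,\infty)$ for which $X$ has upper $\ell^{q_0}(L^2)$-decompositions.

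Next I would apply Theorem \ref{thm:main} with $\gamma_0 = \gamma_1 = 0$ (so that conditions \ref{it:main1} and \ref{it:main2} reduce to the decomposition properties just obtained). This produces constants $C, \beta > 0$, depending only on $X$ and $K_s$, such that
\begin{equation*}
\|T^n\| \leq C\, n^{\frac{1}{2}\left(\frac{1}{q_0} - \frac{1}{q_1}\right)} (\log(n+2))^{\beta}, \qquad n \geq 1.
\end{equation*}
Set $\alpha_0 := \tfrac{1}{2}\bigl(\tfrac{1}{q_0} - \tfrac{1}{q_1}\bigr)$. By Propositions \ref{prop:Utypecotype} and \ref{prop:Ltypecotype}, one automatically has $q_0 \leq 2 \leq q_1$, so $\alpha_0 \geq 0$; moreover $q_0 > 1$ and $q_1 < \infty$ force $\alpha_0 < \tfrac{1}{2}$. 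Hence $\alpha_0 \in [0,\tfrac{1}{2})$ depends only on $X$.

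Finally, to remove the logarithmic factor, I would choose any $\alpha \in (\alpha_0, \tfrac{1}{2})$. Since $(\log(n+2))^{\beta} = o(n^{\alpha - \alpha_0})$ as $n \to \infty$, the estimate $n^{\alpha_0}(\log(n+2))^{\beta} \leq C' n^{\alpha}$ holds for a constant $C'$ depending on $X$, $K_s$, and $\alpha$. This yields $\|T^n\| \leq C'' n^{\alpha}$ for all $n \geq 1$, which is exactly the claimed bound with an exponent in $[0,\tfrac{1}{2})$ depending only on $X$.

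I do not expect any serious obstacle: the corollary is essentially a bookkeeping step that combines the qualitative existence statement of Theorem \ref{thm:nontrivialBourgain} with the quantitative estimate of Theorem \ref{thm:main}, and the log factor is trivially absorbed into an arbitrarily small enlargement of the exponent. The only mild subtlety is confirming that the produced exponent $\alpha_0$ is strictly less than $\tfrac{1}{2}$, which comes for free from $q_0 > 1$.
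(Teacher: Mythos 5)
Your proposal is correct and takes exactly the route the paper intends: the authors simply remark that the corollary follows ``Using Theorem \ref{thm:nontrivialBourgain}'' and leave the bookkeeping unspoken, and you have supplied precisely those missing details. In particular, your use of Propositions \ref{prop:Utypecotype} and \ref{prop:Ltypecotype} to get $q_0\leq 2\leq q_1$ (hence $\alpha_0\geq 0$), the observation that $q_0>1$ and $q_1<\infty$ give $\alpha_0<\tfrac12$, and the absorption of the logarithmic factor into a slight enlargement of the exponent, are all the right steps and leave no gaps.
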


For interpolation spaces {we can also provide explicit growth rates.}
\begin{corollary}[Intermediate UMD]\label{cor:HYcase}
Let $X:=[Y,H]_{\theta}$, where $Y$ is a $\UMD$ Banach space and $H$ is a Hilbert space such that $(Y,H)$ is an interpolation couple, and $\theta\in (0,1)$. Suppose that $T\in \calL(X)$ is strongly Kreiss bounded with constant $K_s$.
Then there exists an $\alpha \in [0,\frac{1-\theta}{2})$ depending on $X$, and a constant $C>0$ depending on $X$ and $K_s$ such that
\begin{equation*}
\|T^n\| = C n^\alpha (\log(n+2))^{\beta} , \qquad  n\geq 1.
\end{equation*}
\end{corollary}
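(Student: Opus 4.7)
The plan is to reduce Corollary \ref{cor:HYcase} to a direct application of Theorem \ref{thm:main}, using the interpolation-based decomposition estimates from Example \ref{710} as the key input.

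First, I would invoke Example \ref{710}: since $X = [Y, H]_\theta$ with $Y$ UMD and $H$ Hilbert, we may pick any $\theta_0 > \theta$ sufficiently close to $\theta$ and a suitable $p \in ((1-\theta/2)^{-1}, 2/\theta)$ such that $X$ has lower $\ell^{2/\theta_0}(L^p)$-decompositions and upper $\ell^{2/(2-\theta_0)}(L^p)$-decompositions. Set $q_0 := 2/(2-\theta_0)$ and $q_1 := 2/\theta_0$; both lie in $(1,\infty)$, and these are precisely the hypotheses of Theorem \ref{thm:main} with $\gamma_0 = \gamma_1 = 0$.

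Next, I would apply Theorem \ref{thm:main} with these parameters. Since $T$ is strongly Kreiss bounded with constant $K_s$, the theorem yields constants $C, \beta > 0$ (depending on $X$ and $K_s$) such that
\begin{equation*}
\|T^n\| \leq C\, n^{\frac{1}{2}\left(\frac{1}{q_0} - \frac{1}{q_1}\right)} (\log(n+2))^{\beta}, \qquad n \geq 1.
\end{equation*}
A direct computation gives
\begin{equation*}
\frac{1}{2}\left(\frac{1}{q_0} - \frac{1}{q_1}\right) = \frac{1}{2}\left(\frac{2-\theta_0}{2} - \frac{\theta_0}{2}\right) = \frac{1-\theta_0}{2}.
\end{equation*}
Setting $\alpha := (1-\theta_0)/2$, we have $\alpha < (1-\theta)/2$ because $\theta_0 > \theta$, which delivers the claimed bound.

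There is essentially no obstacle here beyond checking that Example \ref{710} indeed supplies matched upper and lower decompositions at the \emph{same} exponent $p$ for some $\theta_0 > \theta$; this is exactly how the example is stated, so the work reduces to bookkeeping with the parameters. The only subtle point is that the theorem produces an $\alpha$ strictly less than $(1-\theta)/2$ rather than equal to it, which is forced by the strict inequality $\theta_0 > \theta$ in Example \ref{710}; but this is consistent with the statement of the corollary, which only asks for some $\alpha \in [0, (1-\theta)/2)$.
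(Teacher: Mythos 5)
Your proof is correct and follows essentially the same route as the paper: invoke Example \ref{710} to obtain matched upper $\ell^{2/(2-\theta_0)}(L^p)$- and lower $\ell^{2/\theta_0}(L^p)$-decompositions for some $\theta_0>\theta$, feed these into Theorem \ref{thm:main} with $\gamma_0=\gamma_1=0$, and compute $\alpha=\tfrac{1}{2}(\tfrac{1}{q_0}-\tfrac{1}{q_1})=\tfrac{1-\theta_0}{2}<\tfrac{1-\theta}{2}$. (The paper fixes $p=2$ for concreteness, which is inconsequential; one minor wording slip is your phrase ``we may pick any $\theta_0>\theta$'' --- Example \ref{710} only asserts the \emph{existence} of a suitable $\theta_0>\theta$, not that every choice works, but that is all the corollary needs.)
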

In particular, one can also take $\alpha = (1-\theta)/2$ in the above.
\begin{proof}
By Example \ref{710} we know that $X$ has upper $\ell^{\frac{2}{2-\theta_0}}(L^2)$-decompositions for some $\theta_0>\theta$, and lower $\ell^{\frac{2}{\theta_{0}}}(L^2)$-decompositions for some $\theta_0>\theta$. Thus it remains to observe that
$\alpha:=\frac{2-\theta_0}{4}-\frac{\theta_{0}}{4} = \frac{1-\theta_0}{2}<\frac{1-\theta}{2}$.
\end{proof}

Similarly, the results of \cite[Theorem 4.5]{CCEL} follow from Example \ref{ex:HilbertUL}.
\begin{corollary}[Hilbert spaces]\label{cor:HScase}
Let $X$ be a Hilbert space. Suppose that $T\in \calL(X)$ is strongly Kreiss bounded with constant $K_s$. Then there exist constants $C,\beta>0$ depending $K_s$ such that
\begin{equation*}
\|T^n\| = C (\log(n+2))^\beta , \qquad n\geq 1.
\end{equation*}
\end{corollary}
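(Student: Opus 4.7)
The plan is to deduce Corollary \ref{cor:HScase} by invoking Theorem \ref{thm:main} with the parameters $p=2$, $q_0=q_1=2$, and $\gamma_0=\gamma_1=0$. With this choice the growth exponent in the conclusion of Theorem \ref{thm:main} collapses:
$$\tfrac{1}{2}\Bigl(\tfrac{1}{q_0}-\tfrac{1}{q_1}+\gamma_0+\gamma_1\Bigr) = \tfrac{1}{2}\bigl(\tfrac12-\tfrac12+0+0\bigr) = 0,$$
leaving only the logarithmic factor $(\log(n+2))^\beta$, which is exactly the bound asserted.

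It remains to verify the two hypotheses of Theorem \ref{thm:main} at $p=q_0=q_1=2$ for a Hilbert space $X$. This is the vector-valued Plancherel identity, which is the $p=2$ instance of Example \ref{ex:HilbertUL}. Concretely, for $f\in \cP(\T;X)$ and any interval partition $\cI$ of $\Z$, orthogonality of the characters $(e_n)_{n\in\Z}$ in $L^2(\T;X)$ gives
$$\sum_{I\in\cI} \|D_I f\|_{L^2(\T;X)}^2 = \sum_{I\in\cI}\sum_{n\in I} \|\widehat{f}(n)\|_X^2 = \sum_{n\in\Z} \|\widehat{f}(n)\|_X^2 = \|f\|_{L^2(\T;X)}^2.$$
Hence both the upper and the lower $\ell^{2}(L^{2})$-decomposition inequalities hold with constants $U=L=1$, and no growth factor in $\#\cI$ is needed, so hypotheses \ref{it:main1} and \ref{it:main2} of Theorem \ref{thm:main} are met. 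The constants $C$ and $\beta$ produced by Theorem \ref{thm:main} therefore depend only on $K_s$, since the geometric data $U$, $L$, $p$, $q_0$, $q_1$ are universal across all Hilbert spaces.

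The main (and only) step is the parameter matching; there is no genuine obstacle, as all the substantive work has been absorbed into Theorem \ref{thm:main} and the Plancherel observation in Example \ref{ex:HilbertUL}. As already noted after the statement of Theorem \ref{thm:mainintro}, the logarithmic regime is essentially the best one can hope for here, because the equality $q_0=q_1$ exploited above can only be achieved in spaces isomorphic to a Hilbert space (via Propositions \ref{prop:Utypecotype}, \ref{prop:Ltypecotype} and Kwapien's theorem), so there is no room to sharpen the argument beyond this.
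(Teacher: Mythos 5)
Your proposal is correct and is essentially the paper's own proof: the paper simply cites Example~\ref{ex:HilbertUL} (whose $p=2$ case is the Plancherel identity you spell out), which gives upper and lower $\ell^2(L^2)$-decompositions with $U=L=1$, and then Theorem~\ref{thm:main} with $p=q_0=q_1=2$ and $\gamma_0=\gamma_1=0$ collapses the polynomial exponent to zero. Your added observation that the resulting constants depend only on $K_s$, since $R_{2,X}$, $M_{2,X}$, $U$, $L$ are universal for Hilbert spaces, is a correct justification of the phrasing in the corollary.
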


We can also recover \cite[Theorem 1.1]{AC}, for which we will need the parameters $\gamma_0,\gamma_1$ in Theorem \ref{thm:main}. Recall that in {\cite[Proposition 1.2]{AC},} it is also shown that the exponent $|\frac{1}{2}-\frac{1}{p}|$ cannot be improved.
\begin{corollary}[$L^p$-spaces]\label{cor:Lpcase}
Let $(S, \mathcal{A}, \mu)$ be a $\sigma$-finite measure space and let $X=L^p(S)$ with $p\in (1, \infty)$. Suppose that $T\in \calL(X)$ is strongly Kreiss bounded with constant $K_s$. Then there exist constants $C,\beta>0$ depending on $p$ and $K_s$ such that
\begin{equation*}
\|T^n\| = Cn^{|\frac{1}{2}-\frac{1}{p}|} (\log(n+2))^\beta , \qquad n\geq 1.
\end{equation*}
\end{corollary}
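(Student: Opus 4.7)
The plan is to apply Theorem \ref{thm:main} to $X = L^p(S)$, feeding it the upper and lower decomposition data for $L^p(S)$ recorded in Example \ref{ex:LpUL} and sharpening those estimates by the summability-vs-growth tradeoff of Proposition \ref{prop:growthtrick}. First I would reduce to the case $p \geq 2$ by duality: the identity $(\lambda - T^*)^{-n} = \bigl((\lambda - T)^{-n}\bigr)^*$ shows that $T^* \in \calL(L^{p'}(S))$ is strongly Kreiss bounded with the same constant $K_s$, and $\|(T^*)^n\| = \|T^n\|$. Since the target exponent $\bigl|\tfrac12 - \tfrac1p\bigr| = \bigl|\tfrac12 - \tfrac1{p'}\bigr|$ is symmetric under $p \leftrightarrow p'$, it suffices to handle $p \in [2, \infty)$.

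For $p \geq 2$, Example \ref{ex:LpUL} furnishes the lower $\ell^p(L^p)$-decompositions of $L^p(S)$, which is exactly hypothesis \ref{it:main2} of Theorem \ref{thm:main} with $q_1 = p$ and $\gamma_1 = 0$. On the upper side only $\ell^q(L^p)$-decompositions with $q \in [1, p')$ are known, missing the endpoint $q = p'$. Applying Proposition \ref{prop:growthtrick}\ref{it:alpha1} to each such $q$, I obtain hypothesis \ref{it:main1} with $q_0 = p'$ and $\gamma_0 = \tfrac{1}{q} - \tfrac{1}{p'} > 0$; the admissibility constraint $\gamma_0 < 1/(p')' = 1/p$ is automatic whenever $q > 1$. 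Plugging these parameters into Theorem \ref{thm:main} yields, for each fixed $q \in (1, p')$,
\[
\|T^n\| \leq C_q\, n^{\frac12\bigl(\frac1q - \frac1p\bigr)}\bigl(\log(n+2)\bigr)^{\beta_q}, \qquad n \geq 1,
\]
and the polynomial exponent tends to $\tfrac12 - \tfrac1p$ as $q \uparrow p'$.

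The hard part will be closing this remaining $\varepsilon$-gap to reach the exact endpoint exponent claimed in the statement. I expect to do this by tracking the dependence of $C_q$ and $\beta_q$ on $\gamma_0$ in the proof of Theorem \ref{thm:main}, which should be at worst polynomial in $1/\gamma_0$, and then choosing $q = q(n) \uparrow p'$ at the rate $\tfrac{1}{q} - \tfrac{1}{p'} \asymp 1/\log(n+2)$. With this choice the residual factor $n^{\gamma_0/2} = n^{1/(2\log(n+2))}$ becomes an absolute constant, while $(\log(n+2))^{\beta_q}$ is only inflated to $(\log(n+2))^{\beta}$ for some larger but fixed $\beta$. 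This delivers the claimed bound $\|T^n\| \leq C n^{|\frac12 - \frac1p|}(\log(n+2))^{\beta}$ and simultaneously recovers \cite[Theorem 1.1]{AC}. The symmetric case $p \in (1, 2]$ then follows directly from the duality reduction performed in the first step.
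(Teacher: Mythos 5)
Your setup — the duality reduction, the lower $\ell^p(L^p)$-decompositions for $p\geq 2$ from Example~\ref{ex:LpUL}, and the use of Proposition~\ref{prop:growthtrick}\ref{it:alpha1} to convert upper $\ell^q(L^p)$-decompositions with $q<p'$ into hypothesis~\ref{it:main1} of Theorem~\ref{thm:main} with $q_0=p'$ and $\gamma_0=\frac1q-\frac1{p'}$ — is all correct, and for each fixed $q<p'$ this does give $\|T^n\|\leq C_q\, n^{\frac12(\frac1q-\frac1p)}(\log(n+2))^{\beta_q}$. The gap is in the proposed passage to the limit $q\uparrow p'$.

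Trace the constants in Theorem~\ref{thm:main}: the polylogarithmic exponent is $\beta=w_0+w_1$ with $w_0=\frac{2}{q_0'}-2\gamma_0+\frac{\log F_{p,X}}{\log 2}$ and $F_{p,X}=U\,C'_{p,X}K_s$, so $\beta_q$ grows like $\log U_q$, where $U_q$ is the upper $\ell^q(L^p)$-decomposition constant. That constant is \emph{not} bounded as $q\uparrow p'$: Example~\ref{ex:LpUL} produces it via Proposition~\ref{prop:extrapolation-p}, whose proof interpolates against lower $\ell^\infty(L^t)$-decompositions and forces $t\to\infty$ as $q\uparrow p'$, so $U_q\gtrsim R_{t,\C}^{1-\theta}\to\infty$. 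Indeed, if $U_q$ stayed bounded one could let $q\uparrow p'$ (trigonometric polynomials involve only finitely many nonzero $D_I f$) and deduce upper $\ell^{p'}(L^p)$-decompositions, which is precisely the open Problem~\ref{prob:scalarfield}. With the choice $\gamma_0\asymp 1/\log(n+2)$, even the optimistic estimate $\beta_q=O(\log(1/\gamma_0))=O(\log\log(n+2))$ yields $(\log(n+2))^{\beta_q}=\exp\!\bigl(O((\log\log(n+2))^2)\bigr)$, which is not $O((\log(n+2))^\beta)$ for any fixed $\beta$; under your weaker bound ``polynomial in $1/\gamma_0$'' the blow-up is worse still. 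No choice of $q(n)$ repairs this, because Theorem~\ref{thm:main} is an iterative statement whose polylogarithmic loss is an increasing function of $\log U$, so the $\varepsilon$-gap compounds rather than cancels.

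The paper sidesteps this by never approaching the missing $\ell^{p'}$ endpoint. It reduces to $p\in(1,2]$ and keeps the clean upper decomposition $q_0=p$, $\gamma_0=0$, while on the lower side it proves hypothesis~\ref{it:main2} \emph{directly} with $q_1=2$ and the exact growth $\gamma_1=\frac1p-\frac12$: the $L^2$ Plancherel identity and the weak $(1,1)$ bound for the $\ell^2$-valued Riesz projection are combined through the Marcinkiewicz interpolation theorem, producing a single finite constant with no $\varepsilon$-loss. Theorem~\ref{thm:main} is then invoked once with fixed parameters, giving the sharp exponent $\frac12(\frac1p-\frac12+\frac1p-\frac12)=\frac1p-\frac12$. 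If you want to stay with $p\geq 2$ and the upper side, the analogous move is to prove hypothesis~\ref{it:main1} directly with $q_0=2$ and $\gamma_0=\frac1{p'}-\frac12$ (the dual of the paper's lower estimate), rather than pushing $q_0$ up to $p'$ via extrapolation.
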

\begin{proof}
Due to the missing endpoint, using Example \ref{ex:LpUL} would yield the asymptotic $n^\alpha$ for $\alpha > |\frac{1}{2}-\frac{1}{p}|$. We therefore argue differently, using the growth parameters $\gamma_0,\gamma_1$ in Theorem \ref{thm:main}.

By duality, it suffices to consider the case $p \in (1,2]$. By Example \ref{ex:LpUL} we know that assumption \ref{it:main1} in Theorem \ref{thm:main} holds with $q_0 = p$ and $\gamma_0=0$.
Next we claim that assumption \ref{it:main2} in Theorem \ref{thm:main} is satisfied with $q_1=2$ and $\gamma_1 = \frac1p-\frac12$. This readily follows from \cite{AC}. For convenience we include the details.
Note that for finite families of disjoint intervals $\cI$ and all $f\in \cP(\T)$ with support in $\cup \{I\in \cI\}$,
\begin{align*}
\nrms{\has{\sum_{I\in\cI}|D_{I}f|^2}^{\frac12}}_{L^{2}(\mathbb{T})}&= \|f\|_{L^{2}(\mathbb{T})},\\
\nrms{\has{\sum_{I\in\cI}|D_{I}f|^2}^{\frac12}}_{L^{1,\infty}(\mathbb{T})}&\leq  C\, (\#\cI)^{\frac12} \, \|f\|_{L^{1}(\mathbb{T})}.
\end{align*}
The first identity follows from the fact that $\mathcal{F}:L^2(\T)\to \ell^2(\Z)$ is an invertible isometry (see \cite[Theorem 2.1.9]{HNVW1}) and the second one is immediate from the boundedness of the Riesz projection from $L^1(\ell^2)$ into $L^{1,\infty}(\ell^2)$ (see \cite{KP05}). Applying the Marcinkiewicz interpolation theorem (see \cite[Theorem 2.2.3]{HNVW1}) yields
\begin{align*}
 \has{\sum_{I\in\cI}\norm{D_{I}f}_{L^{p}(\mathbb{T})}^2}^{\frac12} &\leq  \nrms{\has{\sum_{I\in\cI}|D_{I}f|^2}^{\frac12}}_{L^{p}(\mathbb{T})} \leq c_p \, C^{\frac2p-1}\, (\#\cI)^{\frac1p-\frac12} \|f\|_{L^{p}(\mathbb{T})}.
\end{align*}
By Minkowski's inequality and Fubini's theorem, we obtain
\begin{align*}
 \has{\sum_{I\in\cI}\norm{D_{I}f}_{L^{p}(\mathbb{T};L^p(S))}^2}^{\frac12} &\leq c_p \, C^{\frac2p-1}\, (\#\cI)^{\frac1p-\frac12} \|f\|_{L^{p}(\mathbb{T};L^p(S))},
\end{align*}
which implies the claim.

From the above and Theorem \ref{thm:main} we see that
    \begin{equation*}
        \|T^n\|\leq Cn^{\alpha}(\log (n+2))^{\beta}, \qquad n\geq 1,
    \end{equation*}
    with
    $$
    \alpha = \frac12\has{\frac{1}{q_0}-\frac{1}{q_1}+\gamma_0+\gamma_1} =\frac12\has{\frac{1}{p}-\frac{1}{2}+0+\frac1p-\frac12} =  \frac{1}{p} - \frac{1}{2},
    $$
    finishing the proof.
\end{proof}

A further application for Banach function spaces will be presented in Theorems \ref{thm:mainbfs} and \ref{thm61}.

\subsection{Preparatory lemmas}
Before we prove Theorem \ref{thm:main}, we need several preparatory lemmas. We start by noting the key property that we will use of strongly Kreiss bounded operators, which follows from  \cite{GZ} and \cite[Corollary 3.2]{MSZ}.
\begin{lemma}\label{lemma:keyKreiss}
     If $T$ is a strongly Kreiss bounded operator on a Banach space $X$ with constant $K_s$, then we have
     \begin{align}\label{6.3}
      \Big\|\sum^n_{k=0} \lambda^kT^k \Big\|\leq  20K_s\, (n+1),\qquad |\lambda|=1,\, n\in \N .
  \end{align}
\end{lemma}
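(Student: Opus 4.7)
The plan is to reduce \eqref{6.3} to the case $\lambda = 1$ via a rotation of $T$, and then invoke the Abel-type estimate for strongly Kreiss bounded operators proved in \cite{GZ} and \cite[Corollary 3.2]{MSZ}.

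For the reduction, I would observe that if $|\lambda|=1$, then $\lambda T$ is again strongly Kreiss bounded with the same constant $K_s$. Indeed, for $|\mu|>1$ and $n\in\N$ one has
\[
(\mu I - \lambda T)^{-n} = \lambda^{-n}\bigl(\mu\lambda^{-1}I - T\bigr)^{-n},
\]
and since $|\mu\lambda^{-1}| = |\mu|$ this yields $\|(\mu I - \lambda T)^{-n}\| \leq K_s/(|\mu|-1)^{n}$. Because $\sum_{k=0}^{n}\lambda^k T^k = \sum_{k=0}^{n}(\lambda T)^k$, it therefore suffices to prove
\[
\Bigl\|\sum_{k=0}^{n} T^k\Bigr\|\leq 20 K_s\,(n+1),\qquad n\in\N.
\]

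The main difficulty is that combining the triangle inequality with the bound $\|T^k\|\leq K_s\sqrt{2\pi(k+1)}$ from \eqref{eq:strongKreisslinear} only yields an $O(n^{3/2})$ estimate, whereas we need $O(n)$. The improvement has to come from cancellation in the partial sum itself, not from any pointwise bound on $\|T^k\|$. This cancellation is precisely what is extracted in \cite[Corollary 3.2]{MSZ}, building on ideas from \cite{GZ}: starting from the Hille--Yosida-type characterization $\|e^{\xi T}\|\leq K_s e^{|\xi|}$ of strong Kreiss boundedness (see \eqref{k1}), one represents $\sum_{k=0}^{n}T^{k}$ as an integral of $e^{\xi T}$ against a suitable kernel supported near $\xi=0$, and then estimates pointwise in $\xi$, which gives the linear factor $n+1$ in place of the heuristic $n^{3/2}$.

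The only remaining step is to track the numerical constant $20$ through the contour argument of \cite[Corollary 3.2]{MSZ}; this is routine once the integral representation is in place and does not require any property of $X$ beyond the definition \eqref{k0} of strong Kreiss boundedness.
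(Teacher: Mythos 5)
Your proposal cites the same two references ([GZ] and [MSZ, Corollary 3.2]) as the paper, so the overall route coincides. However, there is a genuine (if small) gap: [MSZ, Corollary 3.2] yields the Ces\`aro-type estimate $\bigl\|\sum_{k=0}^{n}\lambda^kT^k\bigr\|\leq 20K_s(n+1)$ only for $n\geq 5$, not for all $n\in\N$. Your final sentence, asking only to ``track the numerical constant $20$ through the contour argument,'' tacitly assumes the cited corollary covers the full range of $n$. The paper closes this gap explicitly: for $n\leq 4$ it invokes the elementary bound \eqref{eq:strongKreisslinear}, which gives $\bigl\|\sum_{k=0}^{n}\lambda^kT^k\bigr\|\leq (n+1)K_s\sqrt{2\pi(n+1)}\leq 20K_s(n+1)$, since $\sqrt{2\pi(n+1)}\leq\sqrt{10\pi}<20$ when $n\leq 4$.

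A secondary point is that your description of the mechanism inside the references does not quite match the paper's. The paper's route is a two-step one: [GZ] establishes the uniform Abel-type estimate $\sup_{n\geq 0}\bigl\|\sum_{k=0}^{n}T^k/\lambda^{k+1}\bigr\|\leq 4K_s/(|\lambda|-1)$ for $|\lambda|>1$, and then [MSZ, Corollary 3.2] converts this Abel-type bound into the Ces\`aro-type bound. This is not an integral representation of $\sum_{k\leq n}T^k$ against $e^{\xi T}$ as you sketch; that flavor of argument (exploiting $e^{\xi T}S_n$) appears later, in Lemma~\ref{lem:key}, not in the proof of this lemma. Finally, your rotation reduction to $\lambda=1$ via strong Kreiss boundedness of $\lambda T$ is correct but unnecessary, since both cited results already hold for all $|\lambda|=1$.
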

\begin{proof}
It was shown in \cite{GZ} that if $T$ is a strongly Kreiss bounded operator with constant $K_s$, then we have
    \begin{align*}
        \underset{n\ge 0}{\sup}\Big\|\sum^{n}_{k=0}\frac{T^k}{\lambda^{k+1}}\Big\|&\leq \frac{4K_s}{|\lambda|-1},&&|\lambda|>1.\end{align*}
    By \cite[Corollary 3.2]{MSZ}, this is equivalent to
  \begin{align*}
  \Big\|\sum^{n}_{k=0}{\lambda^{k}{T^k}}\Big\|&\leq 20K_s\,(n+1),&&|\lambda|=1,\, n\geq 5.
  \end{align*}
 { If $n\leq 4$, (\ref{6.3}) holds because of (\ref{eq:strongKreisslinear}). The proof is complete.}
\end{proof}

The following key lemma will provide a way to obtain a special self-improvement of bounds for strongly Kreiss bounded operators. The proof is
a straightforward extension of \cite{AC}, where $X = L^p$ was considered. In order to obtain not too large explicit constant, some adjustment and optimization seemed necessary. Moreover, it can be helpful to see where the geometry of the space $X$ enters. 

We will use the notation $\sum_{a\leq m \leq b}$ for $a,b\in \R$ to denote the sum over all integers $m \in \Z$ such that $a\leq m\leq b$.

  \begin{lemma}\label{lem:key}
  Let $X$ be a $\UMD$ space and $p\in (1, \infty)$. Let $T\in \calL(X)$ be strongly Kreiss bounded with constant $K_s$. Suppose that there exists an increasing function $h:\mathbb{R}_+\to [1,\infty)$ such that for all $x\in X$ and $n\geq 2$,
\begin{align}\label{eq:esthN}
\Big\|\sum_{1\leq m\leq n}e_mT^m x\Big\|_{L^p(\mathbb{T};X)}\leq h(n)\|x\|.
\end{align}
Then there exists a constant $C_{p,X}>0$  such that for all $j\geq 0$, $n \geq 1$ and $x\in X$,
  \begin{align}\label{m5}
  \begin{aligned}
     \Big \|\sum_{n-\sqrt{n}+j\leq m\leq n}e_mT^m x\Big\|_{L^p(\mathbb{T};X)}& \leq K_s C_{p,X} h(\sqrt{n})\|x\|.
     \end{aligned}
  \end{align}
  \end{lemma}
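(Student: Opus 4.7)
The plan is to reduce the estimate to bounding $\|T^M g_N(\cdot)x\|_{L^p(\T;X)}$ via the shift identity, and then combine the Cauchy integral formula for $T^M$ with the strong Kreiss bound $\|e^{\xi T}\|\le K_s e^{|\xi|}$ and the hypothesis on $g_N$. Set $M:=n-\sqrt n+j-1$ and $N:=\sqrt n-j+1$, so that the sum over the interval $[n-\sqrt n+j,n]$ equals
\begin{equation*}
\sum_{m=M+1}^{M+N}e_m(t)T^m x \;=\; e_M(t)\,T^M g_N(\cdot)x.
\end{equation*}
Since $|e_M|\equiv 1$, the task reduces to estimating $\|T^M g_N(\cdot)x\|_{L^p(\T;X)}$.

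For the first ingredient, I would apply Cauchy's integral formula on the circle $|\xi|=r$ to represent
\begin{equation*}
T^M \;=\; \frac{M!}{2\pi r^M}\int_0^{2\pi} e^{-iM\theta}\, e^{re^{i\theta}T}\, d\theta.
\end{equation*}
Insert this into $T^M g_N(t)x$, use that $e^{\xi T}$ commutes with multiplication by $g_N(t)$, and take the $L^p(\T;X)$-norm in $t$. Minkowski's inequality together with the hypothesis $\|g_N(\cdot)y\|_{L^p(\T;X)}\le h(N)\|y\|$ applied to $y=e^{re^{i\theta}T}x$, combined with $\|e^{re^{i\theta}T}x\|\le K_s e^r\|x\|$, leads to the preliminary estimate
\begin{equation*}
\|T^M g_N(\cdot)x\|_{L^p(\T;X)} \;\le\; \frac{M!\,e^r}{r^M}\,K_s\,h(N)\|x\|.
\end{equation*}
Optimizing $r$ near $M$ via Stirling yields $M!\,e^M/M^M\sim\sqrt{2\pi M}$, which is off by the sharp Kreiss factor $\sqrt M\sim\sqrt n$ from the target bound.

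To remove this spurious $\sqrt n$, I would follow the scheme of \cite[Lem.~3.2]{AC}: split the Cauchy integral in $\theta$ into a principal part localized near the critical angle where the kernel $\theta\mapsto M!\,e^{re^{i\theta}}/r^M$ concentrates, and a remainder. The remainder is dealt with by inserting a Fourier multiplier isolating the sub-interval of frequencies $[M+1,M+N]$ inside the broader polynomial obtained from the Cauchy representation, and then invoking the Marcinkiewicz multiplier theorem (Lemma \ref{m16}) for the UMD space $X$ together with the Kreiss--Cesàro bound of Lemma \ref{lemma:keyKreiss}. The principal part is controlled through refined Stirling-type asymptotics; the careful bookkeeping here is what is meant by the ``adjustment and optimization'' mentioned in the paper.

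The main obstacle is exactly this removal of the $\sqrt M$ factor: the pointwise bound $\|T^M\|\lesssim K_s\sqrt M$ is sharp for general strongly Kreiss bounded operators, so the improvement must come from the Fourier-band localization of $g_N(\cdot)x$ in $[1,N]$ with $N\sim\sqrt n$. The adaptation from the $L^p$ setting of \cite{AC} to an abstract UMD space $X$ consists in replacing the use of Rubio de Francia's Littlewood--Paley inequality, which is available on $L^p$, by the UMD-valued Marcinkiewicz multiplier theorem, which is what allows the sub-interval extraction to be performed with a constant depending only on $p$ and $X$.
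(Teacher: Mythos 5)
Your reduction to estimating $\|T^M g_N(\cdot)x\|_{L^p(\T;X)}$ by factoring out $e_M$ is fine, and your first Minkowski/Cauchy-integral estimate is correct and, as you note, off by a factor $\sqrt M\sim\sqrt n$. However, the ``fix'' you sketch has a genuine gap: the strong Kreiss bound gives only $\|\ee^{r\ee^{i\theta}T}\|\le K_s\ee^{r}$, uniformly in $\theta$, because $|r\ee^{i\theta}|=r$. So although the scalar kernel $\theta\mapsto \ee^{r\cos\theta}$ concentrates near $\theta=0$ on a window of width $\sim r^{-1/2}$, the operator norm $\|\ee^{r\ee^{i\theta}T}\|$ does \emph{not} decay away from $\theta=0$. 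The remainder of the Cauchy integral over $|\theta|\gtrsim r^{-1/2}$ therefore contributes the same order $K_s\ee^{r}$ as the principal part, and there is no quantity left for a Fourier multiplier in $t$ to act on — for each $\theta$ the integrand $g_N(\cdot)\ee^{r\ee^{i\theta}T}x$ is already supported on the frequency band $[1,N]$, so no sub-interval extraction is possible. Once you apply Minkowski in $\theta$ you have discarded the cancellation you would need.

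The paper's proof avoids this by never integrating out a separate angle: it uses $\ee^{e_1(t)nT}$, so that the Cauchy radius is $n$ and the Cauchy angle is identified with the Fourier variable $t$ on $\T$. Writing $S_n(t)=\sum_{1\le m\le\sqrt n}e_m(t)T^m$, one computes that the Fourier coefficient of $\ee^{e_1nT}S_nx$ at frequency $m\in I_n:=[n-\sqrt n+j,n]$ is $b_{n,m}T^mx$ with $b_{n,m}=\sum_{m-\sqrt n\le k\le m-1}n^k/k!$. The pointwise bound $\|\ee^{e_1(t)nT}\|\le K_s\ee^{n}$ from \eqref{k1}, the hypothesis \eqref{eq:esthN} on $S_n$, and one application of the Riesz projection give
$\bigl\|\sum_{m\in I_n}b_{n,m}e_mT^mx\bigr\|_{L^p(\T;X)}\le K_sR_{p,X}\ee^{n}h(\sqrt n)\|x\|$.
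The crucial step is then to \emph{invert} the coefficients on $I_n$: one shows (Lemmas \ref{lemma1}--\ref{lem:factorial2}) that the multiplier $a_{n,m}:=\ee^{n}/b_{n,m}$ is uniformly bounded and of uniformly bounded variation, so the Marcinkiewicz multiplier theorem (Lemma \ref{m16}) converts the displayed bound into the desired one with constant $O_{p,X}(1)$. In effect, the $\sqrt n$ that your absolute-value estimate loses is recovered because the Fourier multiplier theorem performs the Cauchy inversion uniformly over the whole band $I_n$ of length $\sim\sqrt n$, instead of term by term. This coefficient inversion via Marcinkiewicz — not a principal/remainder decomposition of a Cauchy integral — is the missing idea in your sketch.
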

  \begin{proof}
Define $S_n:=\sum_{1\leq m\leq \sqrt{n}}e_mT^m$. Then
     \begin{align*}
         \ee^{e_1 nT}S_n&=\sum_{k\ge 0}\frac{e_k(nT)^k}{k!}\sum_{1\leq m\leq \sqrt{n}}e_mT^m
         \\ & =\sum_{k\ge 0}\sum_{k+1\leq m\leq k+\sqrt{n}}\frac{n^k}{k!}e_mT^m \\
         &=\sum_{1\leq m\leq \sqrt{n}} \widetilde{b}_{n,m} e_mT^m +\sum_{m\geq \lfloor\sqrt{n}\rfloor+1}b_{n,m} e_mT^m,
     \end{align*}
where $\widetilde{b}_{n,m} := \sum_{0\leq k\leq m-1}\frac{n^k}{k!}$ and
$b_{n,m} := \sum_{m-\sqrt{n}\leq k\leq m-1}\frac{n^k}{k!}$.

We first consider the case $n\geq 6$ and thus $\sqrt{n} \geq 2$. Fix $j\geq 0$ and let $I_n = [n-\sqrt{n}+j,n]\cap \N$. Note that $$n-\sqrt{n}\geq \lfloor\sqrt{n}\rfloor+1.$$ By the boundedness of the Riesz projection with constant $R_{p,X}$, \eqref{k1} (which uses the strong Kreiss boundedness), and \eqref{eq:esthN} we obtain
 \begin{align}
\nonumber \Big\|\sum_{m\in I_n} b_{n,m} e_m T^m x \Big\|_{L^p(\mathbb{T};X)}
 &\leq R_{p,X}\|\ee^{e_1nT}S_nx\|_{L^p(\mathbb{T};X)}
 \\  &\leq K_s R_{p,X} e^n\|S_nx\|_{L^p(\mathbb{T};X)}\label{eq:estHsqrtNhelp}
 \\ & \leq K_s R_{p,X} e^n h(\sqrt{n})\|x\|.\nonumber
 \end{align}
Let $a_{n,m} := \ee^n b_{n,m}^{-1}$ for $m\in I_n$ and zero otherwise. Then by Lemma \ref{lem:factorial2}, $\|(a_{n,m})_{m \in \Z}\|_{\ell^\infty}\leq 32$ and $[(a_{n,m})_{m \in \Z}]_{V^1}\leq 978$. Therefore, the Fourier multiplier Lemma \ref{m16} and \eqref{eq:estHsqrtNhelp} imply that
\begin{align*}
\Big\|\sum_{m\in I_n}e^n e_m T^m x\Big\|_{L^p(\mathbb{T};X)} & \leq 1010 \, M_{p,X}\Big\|\sum_{m\in I_n} b_{n,m} e_m T^m x \Big\|_{L^p(\mathbb{T};X)}
\\ & \leq  1010\, M_{p,X} K_s R_{p,X} e^n h(\sqrt{n}) \|x\|.
\end{align*}
Dividing by $e^n$ gives \eqref{m5} with $$C_{p,X} := 1010 M_{p,X} R_{p,X}.$$

To prove the estimate for $n\leq 5$, note that by Lemma \ref{lemma:keyKreiss} we can write
\begin{align*}
\Big\|\sum_{m\in I_n}e_m T^m x\Big\|_{L^p(\mathbb{T};X)}& \leq R_{p,X} \Big\|\sum_{m=0}^n e_m T^m x\Big\|_{L^p(\mathbb{T};X)}
\\ & \leq 20(n+1) K_s R_{p,X}\|x\|\\&
\leq 1010 M_{p,X} K_s R_{p,X} h(\sqrt{n})\|x\|. \qedhere
\end{align*}
 \end{proof}

Combining Lemma \ref{lem:key} with the upper $\ell^q(L^p)$-decompositions, we obtain the following self-improvement result.
\begin{proposition}\label{p5}
Let $1<p,q<\infty$, $\gamma\in [0,1/q')$, and suppose that $X$ is a $\UMD$ which satisfies Theorem \ref{thm:main}\ref{it:main1} with $(q_0,\gamma_0)$ replaced by $(q,\gamma)$. Let $T\in \calL(X)$ be strongly Kreiss bounded with constant $K_s$. Suppose that there exist constants $d\in [0,1]$ and $P\geq 1$ such that for all $x\in X$ and $n\geq1$,
  \begin{align}
      \Big\|\sum^n_{m=1}e_m T^m x\Big\|_{L^p(\mathbb{T};X)}&\leq P n^{d}\|x\|.\label{6251}
  \end{align}
Then there is a constant $C_{p,X}'>0$ such that for all $x\in X$, and $n\geq 1$,
   \begin{align*}
     & \Big\|\sum^n_{m=1}e_m T^m x\Big\|_{L^p(\mathbb{T};X)}\leq P U  C_{p,X}' K_s n^{\frac12(d+\frac{1}{q}+\gamma)}\|x\|.
  \end{align*}
\end{proposition}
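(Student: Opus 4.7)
The plan is to iterate Lemma~\ref{lem:key} to obtain a $\sqrt{n}$-type self-improvement, and then to bundle the resulting pieces back together using the upper $\ell^q(L^p)$-decomposition hypothesis \ref{it:main1} of Theorem~\ref{thm:main} (with $(q_0,\gamma_0)$ replaced by $(q,\gamma)$), paying only a factor $K^{\gamma + 1/q}$ where $K$ is the number of pieces. Writing $f := \sum_{m=1}^n e_m T^m x$, the hypothesis \eqref{6251} is exactly the bound required in Lemma~\ref{lem:key} with the increasing function $h(m) := P m^d$, so that lemma yields, for every $N \geq 1$ and every $j \geq 0$,
\[
\Bigl\|\sum_{N - \sqrt{N} + j \leq m \leq N} e_m T^m x\Bigr\|_{L^p(\T;X)} \leq K_s C_{p,X} P N^{d/2} \|x\|.
\]

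Next I would construct an interval partition of $[1,n]$ into pieces to which this block bound applies. The natural recursive choice is $n_0 := n$, $n_k := n_{k-1} - \lfloor \sqrt{n_{k-1}} \rfloor$, with $I_k := [n_k+1, n_{k-1}]$ for $k=1,\ldots,K$, where $K$ is the first index with $n_K \leq 0$. By construction $|I_k| \leq \sqrt{n_{k-1}}$, so Lemma~\ref{lem:key} applies with endpoint $N = n_{k-1}$ and controls $\|D_{I_k} f\|_{L^p(\T;X)}$ by $K_s C_{p,X} P n_{k-1}^{d/2} \|x\| \leq K_s C_{p,X} P n^{d/2} \|x\|$, using monotonicity of $h$. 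A short analysis of the recursion $\sqrt{n_k} \leq \sqrt{n_{k-1}} - \tfrac12$ shows $K \leq 2\sqrt{n} + 1 \leq 3\sqrt{n}$.

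Since $\widehat f$ is supported in $[1,n] = \bigcup_k I_k$, applying the upper decomposition assumption to $f$ with the partition $\cI := \{I_1,\ldots,I_K\}$ yields
\[
\|f\|_{L^p(\T;X)} \leq U \, K^{\gamma} \Bigl( \sum_{k=1}^K \|D_{I_k} f\|_{L^p(\T;X)}^q \Bigr)^{1/q} \leq U \, K^{\gamma + 1/q} K_s C_{p,X} P \, n^{d/2} \|x\|.
\]
Substituting $K \leq 3\sqrt{n}$ produces the stated exponent $\tfrac12(d + 1/q + \gamma)$, with $C_{p,X}' := 3^{\gamma + 1/q} C_{p,X}$ absorbing the numerical factor and matching the claimed form $P U C_{p,X}' K_s$.

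The only delicate point is the partition: Lemma~\ref{lem:key} only controls a block of length $\ell$ when its right endpoint is at least $\ell^2$, so a uniform partition of $[1,n]$ into blocks of length $\sqrt{n}$ cannot work, since most blocks would be too long relative to their right endpoints. The recursive construction above, in which block lengths shrink with the endpoint, handles this while still producing only $\mathcal{O}(\sqrt{n})$ blocks, which is exactly what is needed to pay at most $n^{(\gamma+1/q)/2}$ in the decomposition estimate.
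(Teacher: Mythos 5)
Your argument is correct and hinges on the same key ingredients as the paper's proof: Lemma~\ref{lem:key} controls blocks whose length is at most the square root of their right endpoint, and the upper decomposition hypothesis then reassembles $O(\sqrt n)$ such blocks into the full sum at a cost of $n^{(\gamma+1/q)/2}$. What differs is the explicit construction of the partition. The paper takes the smallest $N$ with $n\le N^2$, uses the Riesz projection once to pass from $\sum_{m=1}^n$ to $\sum_{m=1}^{N^2}$, and partitions $[1,N^2]$ into the $2N$ intervals $[k^2+1,k^2+k]$ and $[k^2+k+1,(k+1)^2]$ for $k=0,\dots,N-1$, each anchored at a perfect square and handled cleanly and uniformly by Lemma~\ref{lem:key}. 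Your greedy partition of $[1,n]$ from the right achieves the same effect and dispenses with the extra Riesz-projection step, at the price of a slightly more delicate block count.

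On that count: the recursion $\sqrt{n_k}\le\sqrt{n_{k-1}}-\tfrac12$ is not quite true; for $n_{k-1}=99$ one has $n_k=90$ and $\sqrt{90}\approx 9.487>9.450\approx\sqrt{99}-\tfrac12$. The correct statement is $\sqrt{n_k}\le\sqrt{n_{k-1}}-\tfrac14$ for $n_{k-1}\ge 4$, since $\lfloor\sqrt m\rfloor>\sqrt m-1\ge \sqrt m/2-1/16$ once $\sqrt m\ge 15/8$, and at most three further steps are needed once $n_{k-1}\le 3$; this gives $K\le 4\sqrt n+3$ rather than $K\le 3\sqrt n$. Because $\gamma+1/q<1/q'+1/q=1$ is bounded uniformly in $q$ and $\gamma$, this affects only the numerical value of $C'_{p,X}$ and not the form of the estimate, so the stated conclusion stands.
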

\begin{proof}
Let $N\in \N$ be such that $(N-1)^2<n\leq N^2$.
Then by the boundedness of the Riesz projection with constant $R_{p,X}$, the upper decompositions with constant $U$ and Lemma \ref{lem:key} with $h(n) = Pn^d$, we find
\begin{align*}
\Big\|\sum^{n}_{m=1}e_m T^m x\Big\|_{L^p(\mathbb{T};X)}^q
 &\leq R_{p,X}^q \Big\|\sum^{N^2}_{m=1}e_m T^m x\Big\|_{L^p(\mathbb{T};X)}^q
\\ &\leq  U^qR_{p,X}^q(2N)^{q\gamma}\sum^{N-1}_{k=0}\has{\Big\|\sum_{m=k^2+1}^{k^2+k}e_mT^m x\Big\|^{q}_{L^p(\mathbb{T};X)} \\&\hspace{2cm}+ \Big\|\sum_{m=k^2+k+1}^{(k+1)^2}e_mT^m x\Big\|^{q}_{L^p(\mathbb{T};X)}}
\\ &\leq P^q U^q R_{p,X}^q  C_{p,X}^q K_s^q (2N)^{q\gamma}\sum^{N-1}_{k=0} 2(k+1)^{dq} \|x\|^q
\\  &\leq 2^{1+q\gamma} P^qR_{p,X}^{q} U^q C_{p,X}^q K_s^q N^{(d+\gamma)q+1}\|x\|^q,
\end{align*}
where $C_{p,X}$ is the constant defined in the proof of Lemma \ref{lem:key}.
Since $N\leq 2\sqrt{n}$, this gives the result with constant (use $d+\frac2q+2\gamma<d+2\leq 3$)
\[2^{\frac1q+\gamma} R_{p,X} C_{p,X} 2^{d+\frac1q+\gamma}\leq 8\cdot1010 R_{p,X}^2 M_{p,X} = 8080\cdot R_{p,X}^2 M_{p,X}=:C_{p,X}'.\qedhere\]
\end{proof}

\subsection{Proof of Theorem \ref{thm:main}}
We can finally turn to the proof of the main result, which is an extension of the argument in \cite{AC}.

\begin{proof}[Proof of Theorem \ref{thm:main}]
Since $T$ and $T^*$ are both strongly Kreiss bounded, it follows from Lemma \ref{lemma:keyKreiss} and \eqref{eq:strongKreisslinear} that for $S \in \{T,T^*\}$ and $n\geq 1$ we have
 \begin{align*}
      \Big\|\sum^n_{m=1}e_m S^m x\Big\|_{L^p(\mathbb{T};X)}&\leq \min\cbraceb{20K_s (n+1)+1, K_s n \sqrt{2\pi (n+1)}} \|x\|\\&\leq
      21K_s n\|x\|,\label{6251}
  \end{align*}
using the first term in the minimum for $n> 64$ and the second term for $n\leq 64$. Therefore,  \eqref{6251} holds for
$T$ and $T^*$ with $d=c_0=d_0 := 1$ and $P:=21K_s.$

By a similar duality argument as in Proposition \ref{prop:dual}, one can check that the estimate
Theorem \ref{thm:main}\ref{it:main1}  holds with $(X, p, q_0, \gamma_0)$ replaced by $(X^*, p', q_1', \gamma_1)$. Define $c_{N}$ and $d_{N}$ for $N\geq 1$ by
\[c_{N}=\frac{1}{2^N q_0'} - \frac{\gamma_0}{2^N}, \qquad \text{and} \qquad d_{N}=\frac{1}{2^N q_1} -\frac{\gamma_1}{2^N}.\]
Let $F_{p,X} := U  C_{p,X}' K_s$ and $F_{p',X^*} = L C_{p',X^*}'K_s$, where $C_{p,X}'$ is the constant defined in the proof of Proposition \ref{p5}.
By Proposition \ref{p5} and an induction argument one sees that for every $N\geq 1$,
 \begin{align*}
      \Big\|\sum^n_{m=1}e_m T^m x\Big\|_{L^p(\mathbb{T};X)}&\leq P(F_{p,X})^N n^{c_{N}+\frac{1}{q_0}+\gamma_0}\|x\|, &&n\geq 1,\, x\in X,
      \\  \Big\|\sum^n_{m=1}e_m T^{*m} x^*\Big\|_{L^{p'}(\mathbb{T};X^*)}&\leq P(F_{p',X^*} )^N n^{d_{N}+\frac{1}{q_1'}+\gamma_1}\|x^*\|, &&n\geq 1,\, x^*\in X^*.
  \end{align*}

Let $n\geq 14$ and thus $n+2 \geq \ee^\ee$. We claim that there exist $N\in\N$ and $w_0,w_1>0$ such that
\begin{align}\label{eq:claimFpX}
(F_{p,X})^N n^{c_N}&\leq (\log (n+2))^{w_0}, \\ (F_{p',X^*})^N n^{d_N}&\leq (\log (n+2))^{w_1}\label{eq:claimFpX*}.
\end{align}
Indeed, let $N\in\mathbb{N}$ be such that $2^N<\frac{\log (n+2)}{\log(\log (n+2))}\leq 2^{N+1}$. Then
\[n^{c_N}\leq (n+2)^{c_N}
=(\log(n+2))^{\frac{\log (n+2)}{\log(\log (n+2))}c_N}\leq (\log (n+2))^{2/q_0' - 2\gamma_0}.\]
Moreover, from $2^N\leq \frac{\log (n+2)}{\log(\log (n+2))}$ and $\log(\log (n+2))\geq 1$, we obtain that $N\leq \frac{{\log(\log (n+2))}}{\log 2}$. Therefore, since $(F_{p,X})^N\ge 1$,
\[(F_{p,X})^N=e^{N \log F_{p,X}}\leq \ee^{\frac{\log(\log (n+2))}{\log 2} \log F_{p,X}}=(\log (n+2))^{\frac{\log F_{p,X}}{\log 2}}.\]
This gives \eqref{eq:claimFpX} with $w_0 = \frac{2}{q_0'}-2\gamma_0 + \frac{\log F_{p,X}}{\log 2}$. In the same way one sees that  \eqref{eq:claimFpX*} holds with $w_1 = \frac{2}{q_1}-2\gamma_1 + \frac{\log F_{p',X^*}}{\log 2}$.

From \eqref{eq:claimFpX} and \eqref{eq:claimFpX*}  we can conclude that for all $n\geq 14$, and $x\in X$,
 \begin{align}\label{m9}
  \Big\|\sum^n_{m=1}e_m T^m x\Big\|_{L^p(\mathbb{T};X)}&\leq P(\log (n+2))^{w_0} n^{\frac1{q_0}+\gamma_0}\|x\|,
  \\  \label{m10}
  \Big\|\sum^n_{m=1}e_m T^{*m} x^*\Big\|_{L^{p'}(\mathbb{T};X^*)} &\leq P(\log (n+2))^{w_1} n^{\frac1{q_1'}+\gamma_1}\|x^*\|.
   \end{align}
Since $\frac1{q_0}+\gamma_0\geq \frac12$ and $w_0\geq \frac{\log F_{p,X}}{\log 2}\geq \frac{\log(8080)}{\log(2)}$, one can readily check that \eqref{m9} extends to $n\leq 13$, where we used the bound \eqref{eq:strongKreisslinear} once more. The same holds for \eqref{m10}.

Applying \eqref{m10} with $n\geq 3$ replaced by $1+\sqrt{n}\leq 2\sqrt{n}$, we find
 \begin{equation}\label{m11}
      \Big\|\sum_{1\leq k\leq 1+\sqrt{n}}e_k T^{*k} x^*\Big\|_{L^{p'}(\mathbb{T};X^*)}\leq 2P (\log (n+2))^{w_1} n^{\frac{1}{2q_1'}+\frac{\gamma_1}{2}}\|x^*\|.
 \end{equation}
If $n\leq 2$,  (\ref{m11}) holds by \eqref{eq:strongKreisslinear}.
By Lemma \ref{lem:key} and \eqref{m9}, with $h(n):=P(\log (n+2))^{w_0} n^{\frac{1}{q_0}+\gamma_0}$, we obtain
\begin{align}\label{m12}
     \Big\|\sum_{n-\sqrt{n}\leq k\leq n}e_k T^{k} x\Big\|_{L^{p}(\mathbb{T};X)}\leq P K_s C_{p,X} (\log (n+2))^{w_0} n^{\frac{1}{2q_0}+\frac{\gamma_0}{2}}\|x\|.
     \end{align}
It follows that for all $n\geq 1$ and $x\in X, x^*\in X^*$,
\begin{align*}
    (1+\lfloor\sqrt{n}\rfloor)&\big|\lb x^*,T^{n+1}x\rb_{X^*,X}\big|\\&=\big|\sum_{1\leq k\leq 1+\sqrt{n}}\lb T^{*k}x^*,T^{n+1-k}x\rb_{X^*,X}\big|\\
    &=\Big|\int_{\mathbb{T}}\Big< \sum_{1\leq k\leq 1+\sqrt{n}}e_kT^{*k}x^*,\sum_{1\leq m\leq 1+\sqrt{n}}\bar{e}_mT^{n+1-m}x\Big>_{X^*,X}\dd t\Big|\\
    &\leq \Big\|\sum_{1\leq k\leq 1+\sqrt{n}}e_k T^{*k} x^*\Big\|_{L^{p'}(\mathbb{T};X^*)}\Big\|\sum_{n-\sqrt{n}\leq k\leq n}e_k T^{k} x\Big\|_{L^{p}(\mathbb{T};X)}\\
    &\leq 2 P^2 K_s C_{p,X} (\log (n+2))^{w_0+w_1} n^{\frac12(\frac{1}{q_0}+\gamma_0+\frac{1}{q_1'}+\gamma_1)}\|x\|\|x^*\|,
\end{align*}
where in the last step we used \eqref{m11} and \eqref{m12}.
Taking the supremum over $\|x\|\leq 1$ and $\|x^*\|\leq 1$, we obtain for all $n\geq 1$
\[\|T^{n}\|\leq C (\log (n+2))^{\beta} n^{\frac{1}{2}(\frac{1}{q_0}+\gamma_0-\frac{1}{q_1}+\gamma_1)},\]
where $C:=2 P^2 K_s C_{p,X}$ and $\beta = w_0+w_1$.
\end{proof}

\section{Results in Banach function spaces}\label{bfs}
In this final section we will discuss the particular case when $X$ is a Banach \emph{function} space. For details on Banach function spaces, the reader is referred to \cite{LT, Zaanen} and to the recent survey \cite{LNBfs}.

\begin{definition}
  Let $(S,\mathcal{A},\mu)$ be a $\sigma$-finite  measure space and denote the space of measurable functions $f \colon S \to \C$ by $L^0(S)$. A vector space $X \subseteq L^0(S)$ equipped with a norm $\norm{\,\cdot\,}$ is called a \emph{Banach function space over $S$} if it satisfies the following properties:
\begin{itemize}
  \item \emph{Ideal property:} If $f\in X$ and $g\in L^0(S)$ with $|g|\leq|f|$, then $g\in X$ with $\norm{g}\leq \norm{f}$.
  \item \emph{Fatou property:} If $0\leq f_n \uparrow f$ for $(f_n)_{n\geq 1}$ in $X$ and $\sup_{n\geq 1}\norm{f_n}<\infty$, then $f \in X$ and $\norm{f}=\sup_{n\geq 1}\norm{f_n}$.
  \item \emph{Saturation property:} For every measurable $E\subseteq S$ of positive measure, there exists a measurable $F\subseteq E$ of positive measure with $\one_F\in X$.
\end{itemize}
\end{definition}
We note that the saturation property is equivalent to the assumption that there is an $f \in X$ such that $f>0$ almost everywhere. Moreover, the Fatou property ensures that $X$ is complete.

We define the associate space $X'$ of a Banach function space $X$ as the space of all $g \in L^0(S)$ such that
\begin{equation*}
\norm{g}_{X'}:= \sup_{\norm{f}_X \leq 1} \int_{S} |{fg}|\dd\mu<\infty,
\end{equation*}
which is again a Banach function space. For $g\in X'$, define $\varphi_g:X\to \C$ by
\[\varphi_g(f):=\int_S fg \dd\mu,\]
which is a bounded linear functional on $X$, i.e., $\varphi_g\in X^*$. Hence, by identifying $g$ and $\varphi_g$, one can regard $X'$ as a closed subspace of $X^*$. Moreover, if $X$ is reflexive (or, more generally, \emph{order-continuous}), then $X'=X^*$.

The following notions, closely connected to type and cotype, will play an important role in this section (see \cite[Section 1.d]{LT} for details).
\begin{definition}
  $1 \leq p \leq q \leq \infty$. We call $X$ \emph{$p$-convex} if \begin{align*}
    \big\|(|f|^p+|g|^p)^{\frac{1}{p}}\big\|&\leq\big(\|f\|^p+\|g\|^p\big)^{\frac{1}{p}},&& f,g\in X,\intertext{and we call $X$ \emph{$q$-concave} if}
\big(\|f\|^q+\|g\|^q\big)^{\frac{1}{q}}&\leq\big\|(|f|^q+|g|^q)^{\frac{1}{q}}\big\|,&& f,g\in X.
\end{align*}
\end{definition}
Note that any Banach function space is $1$-convex by the triangle inequality and $\infty$-concave by the ideal property. One often defines $p$-convexity and $q$-concavity using finite sums of elements from $X$ and a  constant in the defining inequalities. However, by \cite[Proposition 1.d.8]{LT}, one can always renorm $X$ such that these constants are equal to one, yielding our definition. Moreover, $X$ is $p$-convex ($p$-concave) if and only if $X'$ is $p'$-concave ($p'$-convex).

For $s\in (0,\infty)$ and a Banach function space $X$, define $$X^s:=\{f\in L^0(S):|f|^{\frac{1}{s}}\in X\},$$ equipped with the quasi-norm $\|f\|_{X^s}:=\||f|^{\frac{1}{s}}\|^s_X$. If $s\leq 1$, $X^s$ is always a Banach function space. For $s>1$, $X^s$ is a Banach function space if and only if $X$ is $s$-convex. Note that for $0<s, p<\infty$, we have $(L^p(S))^s=L^{\frac{p}{s}}(S)$.

Our main result of Banach function spaces reads as follows.
\begin{theorem}\label{thm:mainbfs}
  Let $X$ be a Banach function space over $S$ and $s\in (1, 2)$. Suppose $X$ is $s$-convex and $s'$-concave, and
  $$
  X_{s}:= \bigl((X^s)'\bigr)^{\frac{1}{2-s}}
  $$
  is a $\UMD$ Banach function space.
Suppose that $T\in \calL(X)$ is strongly Kreiss bounded with constant $K_s$. Then there exist constants $C,\beta>0$ depending on $X$ and $K_s$ such that
\begin{equation*}
\|T^n\|\leq C n^{\frac{1}{2}-\frac{1}{s'}}(\log(n+2))^{\beta}, \qquad n \geq 1.
\end{equation*}
\end{theorem}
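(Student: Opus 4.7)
The plan is to invoke Theorem~\ref{thm:main} with $p=2$, $q_0=s$, $q_1=s'$, $\gamma_0=\gamma_1=0$; then
\begin{equation*}
\tfrac12\bigl(\tfrac1{q_0}-\tfrac1{q_1}+\gamma_0+\gamma_1\bigr)=\tfrac12\bigl(\tfrac1s-\tfrac1{s'}\bigr)=\tfrac12-\tfrac1{s'},
\end{equation*}
so it suffices to verify that $X$ enjoys upper $\ell^s(L^2)$-decompositions and lower $\ell^{s'}(L^2)$-decompositions. The strategy is to exhibit $X$ as a complex interpolation space between the two UMD Banach function spaces $(X_s)'$ and $L^2$, and then to interpolate the trivial $\ell^1$ and Riesz-type $\ell^\infty$ decompositions on $(X_s)'$ against the Hilbertian $\ell^2(L^2)$ decompositions on $L^2$.

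The starting point is Lozanovskii's factorization in Calderón-product form, $L^2=Y^{1/2}(Y')^{1/2}$. Applied to $Y=X^s$, combined with the hypothesis $(X^s)'=(X_s)^{2-s}$, it gives $L^2=X^{s/2}X_s^{(2-s)/2}$; applied to $Y=X$ it gives $L^2=X^{1/2}(X')^{1/2}$, and dualizing produces $L^2=(X')^{s/2}((X_s)')^{(2-s)/2}$. Equating the two Calderón-product representations of $L^2$ and invoking the Calderón--Lozanovskii duality for products of Banach function spaces yields
\begin{equation*}
X'=X^{s-1}X_s^{2-s}, \qquad X=(X')^{s-1}((X_s)')^{2-s}
\end{equation*}
as Calderón products. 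Substituting the dual Lozanovskii factorization of $L^2$ into $((X_s)')^{1-2/s'}(L^2)^{2/s'}$ and tracking exponents (using $s/s'=s-1$ and $(2-s)/s+(2-s)/s'=2-s$) then gives
\begin{equation*}
((X_s)')^{1-2/s'}(L^2)^{2/s'}=(X')^{s-1}((X_s)')^{2-s}=X.
\end{equation*}
Since $(X_s)'$ is UMD (hence order-continuous) as the dual of a UMD space, Calderón's theorem identifies this Calderón product with the complex interpolation space, so that $X=[(X_s)',L^2]_{2/s'}$.

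With this interpolation identity, Proposition~\ref{prop:interpolation} is applied at $\theta=2/s'$. On the upper side, one interpolates the trivial upper $\ell^1(L^2)$-decomposition of $(X_s)'$ (triangle inequality) against the upper $\ell^2(L^2)$-decomposition of $L^2$ from Example~\ref{ex:HilbertUL}; the resulting exponent $\tfrac1q=(1-\theta)+\tfrac\theta2=1-\tfrac1{s'}=\tfrac1s$ provides the desired upper $\ell^s(L^2)$-decompositions for $X$. On the lower side, one interpolates the lower $\ell^\infty(L^2)$-decomposition of $(X_s)'$ coming from the boundedness of the Riesz projection (available because $(X_s)'$ is UMD) against the lower $\ell^2(L^2)$-decomposition of $L^2$; the resulting exponent $\tfrac1q=\tfrac\theta2=\tfrac1{s'}$ produces the lower $\ell^{s'}(L^2)$-decompositions for $X$. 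Theorem~\ref{thm:main} then yields the stated bound. The main obstacle is the second step, the Calderón-product identification $X'=X^{s-1}X_s^{2-s}$, whose rigorous justification rests on the full Calderón--Lozanovskii duality theory for Banach function spaces together with the order-continuity provided by the UMD hypothesis on $X_s$ and the $s$-convex, $s'$-concave structure of $X$.
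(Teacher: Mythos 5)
Your proposal is correct and rests on exactly the same crux as the paper's proof, namely the interpolation identity $X = \bigl[(X_s)',L^2(S)\bigr]_{2/s'}$, so the overall route is the same. Two points of comparison are worth noting. First, the paper obtains this identity by citing the literature (Nieraeth's Calder\'on-product result together with Calder\'on's identification of the Calder\'on product with complex interpolation), whereas you try to derive it from scratch via Lozanovskii factorization and formal Calder\'on-product algebra. Your computation tracks to the right answer, but the key ``cancellation'' step --- passing from $X^{1/2}(X')^{1/2}=X^{s/2}X_s^{(2-s)/2}$ to $X'=X^{s-1}X_s^{2-s}$ --- is not a formal identity for Calder\'on products and genuinely requires the Lozanovskii cancellation/duality machinery; you flag this yourself, but as written it is the one step that remains a gap and in practice one would cite the same results the paper does. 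Second, once the identity is in hand, you apply Proposition~\ref{prop:interpolation} directly, interpolating the trivial upper $\ell^1(L^2)$ and lower $\ell^\infty(L^2)$ decompositions of $(X_s)'$ against the Hilbertian $\ell^2(L^2)$ decompositions of $L^2$, and then invoke Theorem~\ref{thm:main} with $q_0=s$, $q_1=s'$, $\gamma_0=\gamma_1=0$. This is a cleaner path than the paper's, which routes through Corollary~\ref{cor:HYcase} and Example~\ref{710}; your endpoint interpolation lands precisely on $\ell^s$ and $\ell^{s'}$, which is exactly what is needed and avoids the $\theta_0>\theta$ detour. So modulo the acknowledged Calder\'on--Lozanovskii citation, your argument is a correct and slightly streamlined version of the paper's.
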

\begin{proof}
Note that $X^s$ is $\frac{s'}{s}$-concave, so $(X^s)'$ is $\frac{1}{2-s}$-convex and therefore we can conclude that $X_{s}$ is a well-defined Banach function space. By \cite[Corollary 2.12]{Ni23} and \cite{Ca64} we have
$$
X= ((X_s)')^{1-\frac{2}{s'}} \cdot L^{s'}(S)= \bigl[(X_s)',L^2(S)\bigr]_{\frac2{s'}}.
$$
Since $\frac{1-{2}/{s'}}{2} = \frac{1}{2}-\frac{1}{s'}$,
Corollary \ref{cor:HYcase} yields the result.
\end{proof}

Let us illustrate Theorem \ref{thm:mainbfs} and the space $X_{s}$ with some examples. We start by calculating the space $X_s$ for $X =L^p(S)$.

\begin{example}\label{example:Lp}
  Let $(S,\mathcal{A},\mu)$ be a $\sigma$-finite measure space and let $X=L^p(S)$ with $p\in (1, \infty)$. Let $1\leq s< \min\{p,p'\}$. Note that $L^p(S)$ is $s$-convex and $s'$-concave. Moreover, we have
  $$
  X_s = ((L^{\frac{p}{s}}(S))')^{\frac{1}{2-s}} = (L^{\frac{p}{p-s}}(S))^{\frac{1}{2-s}} = L^{\frac{(2-s)p}{p-s}}(S)=: L^{q}(S) .
  $$
  Since
  \begin{align*}
    q &= \frac{(2-s)p}{p-s} <\infty,
    \qquad \text{ and } \qquad
    q' = \frac{(2-s)p}{p+s-sp} = \frac{(2-s)p'}{p'-s} <\infty,
  \end{align*}
  we observe that $q \in (1,\infty)$ and thus that $X_s$ is a $\UMD$ Banach function space. Therefore,  Theorem \ref{thm:mainbfs} yields that for any strongly Kreiss bounded operator $T \in \calL(X)$, there are $C,\beta>0$ depending on $p$ and $K_s$ such that
\begin{equation*}
\|T^n\| = Cn^{\frac{1}{2}-\frac{1}{s'}} (\log(n+2))^\beta , \qquad n \geq1.
\end{equation*}
\end{example}
The above method can also be extended to non-commutative $L^p$-spaces.
Note that the result in Example \ref{example:Lp} is almost as sharp as Corollary \ref{cor:Lpcase}.
 So, in this particular case, the general result in Theorem \ref{thm:mainbfs} almost recovers the specialized result in  Corollary \ref{cor:Lpcase}. Of course, the advantage of Theorem \ref{thm:mainbfs} is that it is applicable to many other Banach function spaces, such as Lorentz, Orlicz and variable Lebesgue spaces. Let us illustrate the result for variable Lebesgue spaces:

\begin{example}
  Let $(S,\mathcal{A},\mu)$ be a $\sigma$-finite measure space, fix $p_0,p_1 \in (1,\infty)$  and assume $p \colon S\to [p_0,p_1]$ is measurable. Let  $X=L^{p(\cdot)}(S)$ be the space of all $f \in L^0(S)$ such that
  $$
  \int_S |f(x)|^{p(x)} \dd \mu(x)<\infty,
  $$
which, equipped with the corresponding Luxemburg  norm, is a Banach function space.
  Let $1\leq s< \min\{p_0,p_1'\}$ and note that $L^{p(\cdot)}(S)$ is $s$-convex and $s'$-concave. Moreover, by the same computation as in Example \ref{example:Lp}, we have
  $
  X_s = L^{q(\cdot)}(S) ,
  $
  where $q\colon S \to (1,\infty)$ satisfies
  \begin{align*}
    q(x) = \frac{(2-s)p(x)}{p(x)-s} < \frac{(2-s){p_1}}{p_0-s}<\infty,&&x \in S,\\
    q(x)' = \frac{(2-s)p(x)'}{p(x)'-s} < \frac{(2-s){p'_0}}{p_1'-s}<\infty,&&x \in S.
  \end{align*}
  So, by \cite[Corollary 1.2]{LVY18} we know that $X_s$ is a $\UMD$ Banach function space. Therefore,  Theorem \ref{thm:mainbfs} yields that for any strongly Kreiss bounded operator $T \in \calL(X)$, there are $C,\beta>0$ depending on $p$ and $K_s$ such that
\begin{equation*}
\|T^n\| = Cn^{\frac{1}{2}-\frac{1}{s'}} (\log(n+2))^\beta , \qquad n \geq 1.
\end{equation*}
\end{example}

 \subsection{Positive strongly Kreiss bounded operators}\label{positive}
We end this article by considering positive strongly Kreiss bounded operators $T$ (i.e., $Tf\ge0$ for all $f\ge 0$) on a Banach lattice. We refer to \cite{LT} for the definition of a Banach lattice and note that a Banach function space is a particular case of a Banach lattice.

The main result is the following extension of \cite{AC}, where the cases $X = L^p$ and $X$ is an AM or AL space were  considered.
\begin{theorem}\label{thm61}
Let $X$ be a Banach lattice. Suppose that $T\in \calL(X)$ is a positive operator which is strongly Kreiss bounded with constant $K_s$.
\begin{enumerate}[(1)]
\item\label{it:positive1} If $X$ is $p$-convex with $p\in (2,\infty]$, then there exist constants $C,\beta\ge 0$ depending on $X$ and $K_s$ such that
      $$\|T^n\|\leq C n^{\frac{1}{p}}(\log (n+2))^{\beta},\qquad  n\geq 1.$$
\item\label{it:positive2} If $X$ is $q$-concave with $q\in [1, 2)$, then there exist constants $C,\beta\ge 0$ depending on $X$ and $K_s$ such that
      $$\|T^n\|\leq C n^{\frac{1}{q'}}(\log (n+2))^{\beta},\qquad  n\geq 1.$$
\end{enumerate}
  \end{theorem}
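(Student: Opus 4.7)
My plan is to reduce (2) to (1) by duality and then prove (1) by adapting the iterative self-improvement scheme of Theorem \ref{thm:main} to the positive operator setting. For the reduction: since $q$-concavity with $q<\infty$ implies finite cotype, $X$ is order-continuous and $X^*=X'$ is a Banach lattice that is $q'$-convex with $q'\in(2,\infty]$. Taking adjoints preserves positivity and the resolvent estimate \eqref{k0}, so $T^*$ is positive and strongly Kreiss bounded on $X^*$ with the same constant $K_s$. Applying (1) to $T^*$ and using $\|T^n\|=\|T^{*n}\|$ then gives (2).

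For part (1), the strategy is to mimic the proof of Theorem \ref{thm:main} (via the key Lemma \ref{lem:key} and its iteration in Proposition \ref{p5}), while replacing the UMD-based tools (boundedness of the Riesz projection and the Marcinkiewicz multiplier Lemma \ref{m16}) by lattice estimates exploiting positivity of $T$ and $p$-convexity of $X$. The basic positivity estimate is the pointwise lattice inequality
\[
\Bigl|\sum_{m=1}^n c_m T^m f\Bigr|\;\leq\; \sum_{m=1}^n |c_m|\, T^m |f|,
\]
valid for every scalar sequence $(c_m)$. Combined pointwise with H\"older's inequality this gives
\[
\Bigl|\sum_{m=1}^n c_m T^m f\Bigr|\;\leq\; \Bigl(\sum_{m=1}^n |c_m|^{p'}\Bigr)^{1/p'}\Bigl(\sum_{m=1}^n (T^m |f|)^p\Bigr)^{1/p},
\]
after which $p$-convexity of $X$ together with the Kreiss-based bound $\|\sum_{m=1}^n T^m\|\leq 20K_s(n+1)$ contributes a factor of roughly $n^{1/p}$ wherever UMD would have contributed a multiplier constant.

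Once a positivity-based analogue of Lemma \ref{lem:key} is in place, the iteration proceeds as in the proof of Theorem \ref{thm:main}: starting from the bound $\|\sum_{m=1}^n e_m T^m f\|_{L^p(\T;X)}\leq 21K_s n\|f\|$ given by Lemma \ref{lemma:keyKreiss}, each self-improvement approximately halves the gap between the current exponent and the target $1/p$, so after $N\approx\log\log n$ iterations one obtains $\|\sum_{m=1}^n e_m T^m f\|_{L^p(\T;X)}\leq C(\log(n+2))^{\beta_0} n^{1/p}\|f\|$, together with the analogous estimate for $T^*$ on $X^*$. Pairing these as in the last step of the proof of Theorem \ref{thm:main}, namely $(1+\lfloor\sqrt n\rfloor)\langle x^*,T^{n+1}x\rangle = \bigl\langle \sum_{k}e_k T^{*k}x^*,\sum_{m}\bar e_m T^{n+1-m}x\bigr\rangle$, yields the desired bound $\|T^n\|\leq C n^{1/p}(\log(n+2))^\beta$. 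The main obstacle is precisely the construction of the positivity-based analogue of Lemma \ref{lem:key}: one must extract the partial sum $\sum_{m\in I_n}e_m T^m f$ from $\ee^{e_1 nT}S_n f$ using only positivity-preserving operations and the explicit coefficient asymptotics from Lemma \ref{lem:factorial2}, rather than the Marcinkiewicz multiplier theorem. Tracking these coefficients so as not to lose more than an $n^{1/p}$ factor at this step is the technical heart of the argument.
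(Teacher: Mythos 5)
Your proposal inverts the direction of the duality reduction relative to the paper, and the direction matters. You plan to prove the $p$-convex case \ref{it:positive1} directly and then derive \ref{it:positive2} by passing to $T^*$; the paper instead proves \ref{it:positive2} directly and obtains \ref{it:positive1} by applying \ref{it:positive2} to $T^*$ on the $p'$-concave lattice $X^*$. Moreover, the paper's direct proof of \ref{it:positive2} uses none of the $L^p(\T;X)$/Fourier machinery of Theorem \ref{thm:main}. For $x\ge0$ it combines Krivine calculus, the Stirling estimate of Lemma \ref{lemma1} and $\ell^1\hookrightarrow\ell^q$ to obtain the \emph{pointwise lattice} bound
\begin{equation*}
\frac{\ee^n}{28\sqrt n}\Bigl(\sum_{n-\sqrt n\le k\le n}(T^kx)^q\Bigr)^{1/q}\le\sum_{k\ge0}\frac{n^kT^kx}{k!}=\ee^{nT}x,
\end{equation*}
then applies $q$-concavity and the strong Kreiss bound $\|\ee^{nT}x\|\le K_s\ee^n\|x\|$ to get the \emph{scalar} bound $\bigl(\sum_{n-\sqrt n\le k\le n}\|T^kx\|^q\bigr)^{1/q}\le28K_s\sqrt n\|x\|$, and finally pairs with $x^*$ via the identity $(1+\lfloor\sqrt n\rfloor)|\lb T^{n+1}x,x^*\rb|^q=\sum_{1\le k\le1+\sqrt n}|\lb T^{n+1-k}x,T^{*k}x^*\rb|^q$ to obtain the self-improving recursion $\|T^n\|\le28K_sn^{1/(2q')}\sup_{k\le2\sqrt n}\|T^k\|$. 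This is a considerably more elementary route than the one you sketch.

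The gap in your proposed direct proof of \ref{it:positive1} is not merely technical but structural, and the ``positivity-based analogue of Lemma \ref{lem:key}'' you flag as the main obstacle does not exist in the form you need. The concave case is the one amenable to a direct lattice argument because every inequality points the right way: positivity of $T$ gives $a_kT^kx\le\sum_ja_jT^jx$, the embedding $\ell^1\hookrightarrow\ell^q$ ($q\ge1$) bounds $(\sum(a_kT^kx)^q)^{1/q}$ from above by a quantity controlled by the entire function $\ee^{nT}$, and $q$-concavity passes from a bound on $\bigl\|(\sum(T^kx)^q)^{1/q}\bigr\|$ to a bound on the scalar quantity $(\sum\|T^kx\|^q)^{1/q}$, which is precisely what the pairing step consumes. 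For $p$-convexity every one of these reverses: $p$-convexity gives $\bigl\|(\sum|f_k|^p)^{1/p}\bigr\|\le(\sum\|f_k\|^p)^{1/p}$, which is an upper bound where the argument needs a lower bound, and your proposed pointwise Hölder step $|\sum c_mT^mf|\le(\sum|c_m|^{p'})^{1/p'}(\sum(T^m|f|)^p)^{1/p}$ loses a factor $(\#\{m\})^{1/p'}$ that exactly cancels the gain that the self-improvement scheme of Proposition \ref{p5} is supposed to produce (there is no lattice analogue of upper Fourier decompositions, since positivity destroys orthogonality of the $e_m$). The correct reduction is therefore the reverse of yours: prove \ref{it:positive2} directly by the lattice argument above, and then obtain \ref{it:positive1} by duality, using $\|T^n\|=\|T^{*n}\|$ and the fact that $X$ $p$-convex implies $X^*$ is $p'$-concave.
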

  \begin{proof} The case $n=1$ is clear. In the following, we assume $n\geq 2$.

\ref{it:positive2}:\  Let $x\ge 0$. Using the Krivine calculus (see \cite[Proposition 1.d.1]{LT}),  Lemma \ref{lemma1}, the positivity of $T$, and  $\ell^1\hookrightarrow \ell^{q}$, we obtain
  \begin{align}\label{eq:Krivine}
\frac{\ee^n}{28\sqrt{n}}\Big(\sum_{n-\sqrt{n}\leq k\leq n}(T^k x)^{q}\Big)^{\frac{1}{q}}\leq \Big(\sum_{k\ge 0}\Big(\frac{n^k T^k x}{k!}\Big)^{q}\Big)^{\frac{1}{q}}\leq \sum_{k\ge 0}\frac{n^k T^k x}{k!}.
  \end{align}
  Since $X$ is $q$-concave, it follows from (\ref{k1}) that
  \begin{align*}
  \frac{\ee^n}{28\sqrt{n}}\big(\sum_{n-\sqrt{n}\leq k\leq n}\|T^k x\|^{q}\big)^{\frac{1}{q}}& \leq \frac{ \ee^n}{28\sqrt{n}}\Big\|\Big(\sum_{n-\sqrt{n}\leq k\leq n}(T^k x)^{q}\Big)^{\frac{1}{q}}\Big\|\nonumber\\
  &\leq \Big\| \sum_{k\ge 0}\frac{n^k T^k x}{k!}\Big\|=\|\ee^{nT}x\|\leq K_s\ee^n\|x\|,
  \end{align*}
    Therefore,
       \begin{align}\label{ps0}
    \Big(\sum_{n-\sqrt{n}\leq k\leq n}\|T^k x\|^{q}\Big)^{\frac{1}{q}}\leq 28K_s\sqrt{n}\|x\|.
    \end{align}
For all $x\in X,x^*\in X^*$, we can estimate
\begin{align*}
(1+\lfloor\sqrt{n}\rfloor)\big|\lb T^{n+1}x, x^*&\rb\big|^{q}=\sum_{1\leq k\leq 1+\sqrt{n}}\big|\lb T^{n+1-k}x,T^{*k}x^*\rb\big|^{q}\\
    &\leq\sum_{1\leq k\leq 1+\sqrt{n}}\|T^{n+1-k}x\|^{q} \|T^{*k}x^*\|^{q}\\
    &\leq \sum_{n-\sqrt{n}\leq k\leq n}\|T^{k} x\|^{q} \underset{1\leq k\leq 1+\sqrt{n}}{\sup}\|T^{*k}\|^{q}\|x^*\|^{q}.
\end{align*}
 Therefore, from \eqref{ps0}, taking the supremum over $\|x\|, \|x^*\|\leq 1$, and using $\|T^{n}\|=\|T^{*n}\|$, we find that
 \begin{align}\label{m17}
     \|T^{n}\|\leq 28K_s n^{\frac{1}{2q'}} \underset{1\leq k\leq2\sqrt{n}}{\sup}\|T^k\|.
 \end{align}
Since $\|T^n\|\leq K_s\sqrt{2\pi (n+1)}$  in any Banach space  by \eqref{eq:strongKreisslinear}, from \eqref{m17} we obtain
by induction that for any $N\geq 0$,
\begin{align*}
     \|T^{n}\|\leq 2\sqrt{\pi}K_s\cdot Q^N n^{\frac{1}{q'}+(\frac{1}{2}-\frac{1}{q'})2^{-N}},
 \end{align*}
where $Q=28 \cdot \sqrt{2}K_s$. Proceeding as in the proof of Theorem \ref{thm:main}, we see there exist $C, \beta>0$ such that
$$  \|T^{n}\|\leq C n^{\frac{1}{q'}}(\log (n+2))^{\beta}, \qquad  n\geq 2.$$

\ref{it:positive1}:\  We note that \ref{it:positive2} holds for $T^*$ on $X^*$ with exponent $1/p'$. Now \ref{it:positive1} follows by duality.
  \end{proof}

\section{Open problems}\label{sec:open}

In this section we collect some open problems related to the results of the paper.

The upper and lower decompositions imply (Fourier) type and cotype properties of $X$ as we have seen in Propositions \ref{prop:Utypecotype} and \ref{prop:Ltypecotype}. It would be interesting to know if a converse result holds.
\begin{problem}
Let $X$ be a $\UMD$ space and $p,q\in (1, \infty)$. Find a sufficient condition for $\ell^q(L^p)$-upper or lower decompositions in terms of (Fourier) type and cotype of the space $X$.
\end{problem}

Our decomposition properties are Fourier decomposition properties on $\T$. One may similarly define Fourier decomposition properties on $\R$, in which case it is natural to wonder if these properties would be equivalent. Note that transference methods are not directly applicable.

\begin{problem}
 Are the decomposition properties equivalent to their counterparts on $\R$?
\end{problem}

Even for the scalar field, we do not know for which $p$ and $q$ the upper and lower $\ell^q(L^p)$-decompositions hold. The following problem concerns the missing sharp endpoints.
\begin{problem}\label{prob:scalarfield}
Does the scalar field $\C$ have lower
$\ell^{p'}(L^p)$-decompositions for $p\in (1,2)$?
\end{problem}
If one reverses the roles of $\ell^{p'}$ and $L^p$, then the above estimate fails as was observed in \cite{CT}, which answered a problem left open in \cite{Rubio}.
In particular, a positive answer to Problem \ref{prob:scalarfield} would be a special case of the following:
\begin{problem}
Does Proposition \ref{prop:extrapolation-p} hold in the sharp case $\theta = \min\big\{\frac{p}r, \frac{p'}{r'}\big\}$?
\end{problem}

In Corollary \ref{cor:Lpcase} we have seen a sharp result for $X = L^p(S)$ for strongly Kreiss bounded operators. The method of Example \ref{example:Lp} can be adjusted to cover  non-commutative $L^p$-spaces, but yields a sub-optimal result.
\begin{problem}
Does Corollary \ref{cor:Lpcase} hold for non-commutative $L^p$-spaces?
\end{problem}

It seems that the bounds for positive operators obtained in Section \ref{positive} are non-optimal. Especially for $L^p(S)$-spaces we expect that there is an improvement. The bounds obtained from Corollary \ref{cor:Lpcase} and Theorem \ref{thm61} are different. Moreover, as observed in \cite{AC}, the bound of Theorem \ref{thm61} is worse than the one in Corollary \ref{cor:Lpcase} if $p\in (4/3, 4)$. It is unclear to us if and how positivity can help in the case $p\in (4/3, 4)$. Given the results for $L^1(S)$, $L^2(S)$ and $L^\infty(S)$ (see \cite{AC}), one could even hope that $\theta=0$ in the case of positive operators.

\begin{problem}
Let $T$ be a positive operator on $L^p(S)$ with $p\in (1, \infty)\setminus\{2\}$ which is strongly Kreiss bounded. What is the infimum of all $\theta\in [0,1/2)$ for which there exists a $C$ such that $\|T^n\|\leq C n^{\theta}$ for all $n\geq 1$.
\end{problem}

There has been a lot of interest in Kreiss bounded operators in finite dimensions (see \cite{kreiss19621,leveque1984,Spijker}). However, it seems to be unknown whether the obtained bounds in terms of the dimension can be improved for strongly Kreiss bounded operators.
\begin{problem}
Let $X$ be $d$-dimensional. Let $T$ be strongly Kreiss bounded. Determine the best $\theta\in (0,1]$ for which there exists a $C$ such that $\|T^n\|\leq C d^{\theta}$ for all $n\geq 1$.
\end{problem}

\appendix

\section{Technical estimate}
In this appendix we present some technical estimates based on the standard Stirling formula. These are quantified and optimized versions of results from \cite{AC}.
\begin{lemma}\label{lemma1}
    Let  $n\ge 2$. Then for all integers $k\in [0,2\sqrt{n}]$,
    \begin{align}\label{5.22}
       \frac{\ee^n}{28\sqrt{n}}\leq \frac{n^{n-k}}{(n-k)!}\leq \frac{\ee^n}{\sqrt{\frac{8\pi}{5} n}}.
    \end{align}
\end{lemma}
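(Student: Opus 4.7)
The plan is to use Stirling's formula with explicit error bounds together with a simple monotonicity observation. Factorizing
\[
\frac{n^{n-k}}{(n-k)!} = \frac{n^n}{n!} \prod_{j=0}^{k-1}\Big(1 - \frac{j}{n}\Big),
\]
I would first note that consecutive ratios equal $(n-k)/n \leq 1$, so the quantity is non-increasing in $k$. This reduces the upper bound to the case $k = 0$ and the lower bound to the largest admissible index $k = \lfloor 2\sqrt{n}\rfloor$.

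For the upper bound at $k = 0$, the classical Stirling lower bound $n! \geq \sqrt{2\pi n}(n/\ee)^n$ gives $n^n/n! \leq \ee^n/\sqrt{2\pi n}$, which is already at most $\ee^n/\sqrt{8\pi n/5}$ since $2 \geq 8/5$. This step is routine.

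For the lower bound at $k = \lfloor 2\sqrt{n}\rfloor$, I plan to combine the Stirling upper bound $n! \leq \sqrt{2\pi n}(n/\ee)^n \ee^{1/(12n)}$ with a Taylor estimate such as $-\log(1-x) \leq x + x^2/2 + x^3/(3(1-x))$ applied at $x = j/n$. Summing from $j = 0$ to $k-1$ and using $k \leq 2\sqrt{n}$ yields $\log \prod_{j=0}^{k-1}(1 - j/n) \geq -2 - O(1/\sqrt{n})$. Combining these two ingredients produces a bound of the form $n^{n-k}/(n-k)! \geq \ee^n/(\sqrt{2\pi}\cdot \ee^{2+o(1)}\sqrt{n})$; since $\sqrt{2\pi}\,\ee^2 \approx 18.5 < 28$, there is genuine slack for the correction terms once $n$ exceeds an explicit threshold $N_0$.

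The main technical obstacle will be fitting the constant $28$ uniformly across all $n \geq 2$: the Stirling and Taylor tails become non-negligible for small $n$, so I would handle the finitely many values $2 \leq n < N_0$ by direct numerical verification. The slightly weaker constant $\sqrt{8\pi/5}$ on the upper side (rather than the sharper $\sqrt{2\pi}$) is precisely what leaves room for these small-$n$ cases, which is the likely origin of the ``optimization of the constants'' referred to in the appendix preamble.
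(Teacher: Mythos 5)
Your proposal is correct and takes a genuinely different route from the paper's, though the two meet on essentially the same Stirling estimate in the end. The paper applies Stirling directly to $(n-k)!$ and must therefore control the expression $\ee^k(1-k/n)^{n-k}$ uniformly over all $k\in[0,2\sqrt n]$; it does so via the Pad\'e-type inequality $\log(1-x)\leq -2x/(2-x)$, obtaining $\ee^k(1-k/n)^{n-k}\leq \ee^{20/9}$ for $n\geq 100$, which feeds into the Stirling bound to give a constant $\sqrt{2\pi}\,\ee^{20/9+1/960}\approx 23<28$. You instead write $n^{n-k}/(n-k)! = (n^n/n!)\prod_{j=0}^{k-1}(1-j/n)$ and note that the ratio of consecutive values is $(n-k)/n\leq 1$, so the quantity is non-increasing in $k$. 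This monotonicity observation is a real simplification absent from the paper: it reduces the upper bound to the single case $k=0$ (a one-line application of $n!\geq\sqrt{2\pi n}(n/\ee)^n$, with $\sqrt{2\pi}>\sqrt{8\pi/5}$) and the lower bound to the single case $k=\lfloor 2\sqrt n\rfloor$, for which you need an upper bound on $n!$, hence the $\ee^{1/(12n)}$ correction. Your Taylor-plus-geometric-tail estimate $-\log(1-x)\leq x+x^2/2+x^3/(3(1-x))$ then replaces the paper's inequality and, after summing $j=0,\dots,k-1$, gives $-\log\prod(1-j/n)\leq 2+O(1/\sqrt n)$ for $n\geq 100$; combined with Stirling this yields a constant around $\sqrt{2\pi}\,\ee^{2.05}\approx 19.5$, safely under $28$, with the remaining small $n$ handled by direct inspection exactly as the paper does for $n\in[2,99]$. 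Both routes are sound; yours is a little cleaner conceptually because the monotonicity collapses the $k$-uniformity issue to two endpoint cases.
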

\begin{proof}
 The case $n\in [2,99]$ can be checked by hand. In the following we assume $n\geq 100$. It is elementary to check that $\log(1-x) \leq -\frac{2x}{2-x}$ for $x\in [0,1)$. Therefore, setting $g(x) = \frac{x^2}{2n-x}$, we find
\begin{align*}
\log\big(\ee^x (1-\tfrac{x}{n})^{n-x}\big)  &= x+ (n-x) \log(1-\tfrac{x}{n}) \\&\leq x- (n-x) \frac{2x}{2n-x}
 = g(x).
\end{align*}
The function $g\colon [0,2\sqrt{n}] \to [0,\infty)$ is increasing. It follows that for all $x\in [0,2\sqrt{n}]$,
\[g(x)\leq  g(2\sqrt{n}) = \frac{4n}{2n-2\sqrt{n}} = \frac{2}{1 - \frac1{\sqrt{n}}}\leq \frac{20}{9},\]
where in the last step we used $n\geq 100$. Therefore, we can conclude
\begin{align}\label{eq:elemenestexp}
\ee^{k} (1-\tfrac{k}{n})^{n-k} \leq \ee^{\frac{20}{9}}.
\end{align}
One can check that the latter is close to optimal. For $k =2\sqrt{n}$ and $n\to \infty$, the left-hand side tends to $\ee^{2}$.

Next, we show that \eqref{5.22} holds via the standard
Stirling formula
\begin{align}\label{eq:stirling}
e^{\frac{1}{12n+1}}\sqrt{2 \pi n}(\tfrac{n}{e})^n<n!<e^{\frac{1}{12n}}\sqrt{2 \pi n}(\tfrac{n}{e})^n, \qquad n\geq 1.
\end{align}

Let $n\geq 100$ and $k\in [0,2\sqrt{n}]$. It follows that
{$n-k\geq n-2\sqrt{n}\ge 80$.} Thus, by the upper estimate of \eqref{eq:stirling} and \eqref{eq:elemenestexp}, we have
\begin{align*}
(n-k)!&\leq \sqrt{2 \pi (n-k)}\big(\tfrac{n-k}{\ee}\big)^{n-k} \ee^{\frac{1}{12(n-k)}}
\\ &\leq  \sqrt{2 \pi n}\big(\tfrac{n}{\ee}\big)^{n-k}(1-\tfrac{k}{n})^{n-k}\ee^{\frac{1}{12\cdot 80}}
\\ &\leq \sqrt{2\pi}\sqrt{n}\big(\tfrac{n}{\ee}\big)^{n-k} \ee^{-k+\frac{20}{9}}\ee^{\frac{1}{960}}
\\ &= \ee^{\frac{20}{9}}\ee^{\frac{1}{960}}\sqrt{2\pi}\sqrt{n} n^{n-k} \ee^{-n}
\\ & \leq 28\sqrt{n} n^{n-k} \ee^{-n}.
    \end{align*}
 The first estimate in \eqref{5.22} is proved.

On the other hand, since $f(x):=\ee^{x} (1-\frac{x}{n})^{n-x}$ is increasing, we have $f(x)\geq f(0)=1$ for $x\in [0,2\sqrt{n}]$. Note that $n-k\geq n-2\sqrt{n}\ge\frac{4}{5}n$ due to $n\geq 100$. Thus the lower estimate of \eqref{eq:stirling} gives
\begin{align*}
(n-k)!&\ge \sqrt{2 \pi (n-k)}(\tfrac{n-k}{\ee})^{n-k} \ee^{\frac{1}{12(n-k)+1}}\\
&\ge  \sqrt{\frac{8\pi}{5}}\sqrt{n} n^{n-k} \ee^{-n} \ee^{k}(1-\tfrac{k}{n})^{n-k}\\
&\ge \sqrt{\frac{8\pi}{5}} \sqrt{n} n^{n-k}\ee^{-n},
 \end{align*}
finishing the proof.
\end{proof}

\begin{lemma}\label{lem:factorial2}
For $n\geq 2$ and $n-\sqrt{n}\leq m\leq n$, define
\begin{align*}
b_{n,m}&:=\sum_{m-\sqrt{n}\leq k\leq m-1}\frac{n^k}{k!},\\
a_{n,m} &:=\ee^n b_{n,m}^{-1},
\end{align*}
and $a_{n,m} = b_{n,m}=0$ otherwise. Then we have
$\|(a_{n,m})_{m\in \Z}\|_{\ell^\infty}\leq 32$ and $[(a_{n,m})_{m\in \Z}]_{V^1}\leq 978.$
\end{lemma}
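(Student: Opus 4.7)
The plan is to reduce everything to two-sided bounds on the individual terms $n^k/k!$ supplied by Lemma \ref{lemma1}. First I would observe that, for any $m$ in the support $[n-\sqrt{n},n]$ and any integer $k$ in the range defining $b_{n,m}$, one has $n-k \in [1, 2\sqrt{n}]$, so Lemma \ref{lemma1} applies and yields
$$\frac{\ee^n}{28\sqrt{n}} \leq \frac{n^k}{k!} \leq \frac{\ee^n}{\sqrt{8\pi n/5}}.$$
Since the number of integers $k$ in $[m-\sqrt{n}, m-1]$ equals $\lfloor\sqrt{n}\rfloor$ or $\sqrt{n}$ (depending on whether $\sqrt{n}\in\Z$), both $b_{n,m}$ and its translate $b_{n,m+1}$ are then sandwiched between explicit positive multiples of $\ee^n$.

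For the $\ell^\infty$ bound, summing the lower estimate over the $\geq \lfloor\sqrt{n}\rfloor$ indices contributing to $b_{n,m}$ gives $b_{n,m} \geq \lfloor\sqrt{n}\rfloor \cdot \ee^n/(28\sqrt{n})$, so $a_{n,m} = \ee^n/b_{n,m} \leq 28\sqrt{n}/\lfloor\sqrt{n}\rfloor$. For $n\geq 64$ one has $\lfloor\sqrt{n}\rfloor \geq \sqrt{n}-1 \geq \tfrac{7}{8}\sqrt{n}$, which already delivers $a_{n,m}\leq 32$; the finitely many remaining cases $n\in\{2,\ldots,63\}$ are a direct computation since there are only $O(\sqrt{n})$ nonzero values of $m$ to check for each such $n$.

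For the total variation, the key observation is that the windows defining $b_{n,m}$ and $b_{n,m+1}$ differ by exactly one index at each end: setting $k_0(m) := \lceil m-\sqrt{n}\rceil$, we have $\lceil (m{+}1)-\sqrt{n}\rceil = k_0(m)+1$, so
$$b_{n,m+1} - b_{n,m} = \frac{n^m}{m!} - \frac{n^{k_0(m)}}{k_0(m)!}.$$
Both indices $m$ and $k_0(m)$ lie in the range of Lemma \ref{lemma1}, so $|b_{n,m+1}-b_{n,m}| \leq 2\ee^n/\sqrt{8\pi n/5}$; combined with the lower bound $b_{n,m}\,b_{n,m+1}\geq \ee^{2n}/32^2$ from the previous step, this gives
$$|a_{n,m+1}-a_{n,m}| \leq \frac{2\cdot 32^2}{\sqrt{8\pi n/5}}.$$
The support of $(a_{n,m})_m$ contains at most $\sqrt{n}+1$ integers, producing at most $\sqrt{n}$ interior transitions whose contribution sums to $2\cdot 32^2\sqrt{5/(8\pi)} < 914$, independent of $n$. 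The two boundary jumps (from $a_{n,m_1-1}=0$ up to $a_{n,m_1}$ and from $a_{n,n}$ down to $a_{n,n+1}=0$) are each bounded by $32$ via the $\ell^\infty$ estimate, contributing $\leq 64$, so the total variation is at most $978$. The main nuisance is tracking the exact numerical constants through the chain of estimates (and in particular ensuring that $\lfloor\sqrt{n}\rfloor/\sqrt{n}$ is close enough to $1$ in the $\ell^\infty$ step), which forces a split between large $n$ handled by the asymptotics and small $n$ dispatched by a finite calculation.
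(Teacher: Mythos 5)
Your proof is correct and follows essentially the same route as the paper's: both reduce to the two-sided pointwise bounds on $n^k/k!$ from Lemma \ref{lemma1}, sum the lower bound over the roughly $\sqrt{n}$ indices in the window to get $a_{n,m}\leq 32$, exploit the one-term-in/one-term-out structure of $b_{n,m+1}-b_{n,m}$ together with the denominator bound $b_{n,m}b_{n,m+1}\geq \ee^{2n}/32^2$ to obtain a per-step variation of order $1/\sqrt{n}$, and account separately for the two boundary jumps with $2\cdot 32=64$. The only cosmetic difference is that you phrase the denominator bound in terms of $b$ rather than $a$ and use the threshold $n\geq 64$ for the large-$n$ regime where the paper uses $n\geq 100$; since Lemma \ref{lemma1} is stated for all $n\geq 2$, both choices are valid.
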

\begin{proof}
The case $n\in [2,99]$ can be checked by hand. In the following we assume $n\geq 100$. We start with the boundedness of $a_{n,m}$ for $m\in [n-\sqrt{n},n]$.
From \eqref{5.22} it is almost immediate that \[|a_{n,m}|=\frac{e^n}{b_{n,m}}\leq \frac{28e^{n}}{\sum_{m-\sqrt{n}\leq k\leq m-1} \frac{e^n}{\sqrt{n}}} \leq 28\big(1+\frac{1}{\sqrt{n}-1}\big)\leq 32.\]

Next, we show that $(a_{n,m})_{m\in \Z}$ has bounded variation. First we fix $m\in[n-\sqrt{n},n-1]$ and let  $L:=\lceil m-\sqrt{n}\rceil$.
By \eqref{5.22},
\begin{align*}
    |a_{n,m+1}-a_{n,m}|&=\ee^n\big|b_{n,m+1}^{-1}-b_{n,m}^{-1}\big|
    =\ee^n\frac{|\frac{n^m}{m!}-\frac{n^L}{L!}|}{b_{n,m} b_{n,m+1}} \\ & \leq \ee^{-n} \bigl({\frac{n^m}{m!}+\frac{n^L}{L!}}\bigr)\cdot a_{n,m} a_{n,m+1}
     \leq \ee^{-n} \cdot  {2\frac{e^n}{\sqrt{\frac{8\pi}{5} n}}}\cdot 32^2 \leq \frac{914}{\sqrt{n}}.
\end{align*}
Therefore, we can conclude
   $$[(a_{n,m})_{m\in \Z}]_{V^1}\leq 2 \,\sup_{m\ge 1}|a_{n,m}|+\sum_{n-\sqrt{n}\leq m\leq n-1}|a_{n,m+1}-a_{n,m}|\leq 978,$$
   finishing the proof.
\end{proof}

\bibliographystyle{plain}
\bibliography{literature530}

\end{document}